 \def\LaTeX{\leavevmode L\raise.42ex
   \hbox{\kern-.3em\size{\sfksize}{0pt}\selectfont A}\kern-.15em\TeX}
\newcommand{\BibTeX}{{\rm B\kern-.05em{\sc
i\kern-.025emb}\kern-.08em\TeX}}
\newtheorem{thm}{Theorem}[section]
\newtheorem{col}[thm]{Corollary}
\newtheorem{lem}[thm]{Lemma}
\newtheorem{rem}[thm]{Remark}
\theoremstyle{definition}
\newtheorem{defn}[thm]{Definition}
\newtheorem{theorem}[thm]{Theorem}
\newcommand\RR{{\Bbb R}}
\newcommand\NN{{\Bbb N}}
\numberwithin{equation}{section}
\begin{document}

\title[Band-limited Localized Parseval frames and Besov spaces
on Compact Homogeneous Manifolds]{Band-limited Localized Parseval frames and Besov spaces
on Compact Homogeneous Manifolds}

\maketitle
\begin{center}

\author {Daryl Geller}
\footnote{Department of Mathematics, Stony Brook University, Stony Brook, NY 11794-3651; darylkmath.sunysb.edu}

\author{Isaac Z. Pesenson }\footnote{ Department of Mathematics, Temple University,
 Philadelphia,
PA 19122; pesenson@temple.edu. The author was supported in
part by the National Geospatial-Intelligence Agency University
Research Initiative (NURI), grant HM1582-08-1-0019. }

\end{center}

\begin{abstract}
 In the last decade,
methods based on various kinds of spherical wavelet bases  have found applications in virtually all areas 
where analysis  of spherical data is required,
including cosmology, weather prediction, and geodesy. 
In particular, the so-called needlets (=band-limited Parseval frames) have become
an  important tool for the analysis of  Cosmic Microwave Background (CMB) temperature data. 
The goal of the present paper is to construct band-limited and highly localized Parseval frames on 
general compact homogeneous 
manifolds. Our construction can be considered as an analogue of the well-known $\varphi$-transform on Euclidean spaces.
\end{abstract}

 \keywords{Compact homogeneous manifold, wavelets, Laplace operator, eigenfunctions,}
 \subjclass[2000]{ 43A85; 42C40; 41A17;
Secondary 41A10}

\section{Introduction}
 In the last decade,
methods based on spherical wavelets have found applications in virtually all areas where analysis  of spherical data is 
required, including cosmology, weather prediction and geodesy (see \cite{FMN}, \cite{F},   \cite{J}, \cite{V1}  and the 
references therein).
In particular, they have become an important tool for the analysis of  Cosmic Microwave Background 
(CMB) temperature data (\cite{G1}, \cite{G2}, \cite{V1}, \cite{V2}, \cite{antoine}, \cite{jfaa1}, \cite{jfaa3},
\cite{Cruz1}, \cite{chmpv}, \cite{mcewen1}, \cite{mcewen2}, \cite{wiaux2}, and many other articles).  
In analyzing CMB temperature data, one seeks precise estimates of several parameters of the greatest
interest for Cosmology and Theoretical Physics, as well as information on possible regions of non-Gaussianity, and other
information as well.

In the past few years, a new kind of wavelet has found many fruitful applications in the analysis of CMB  temperature data, 
the so-called {\em spherical needlets}, which form a Parseval (= normalized tight) frame on the sphere 
(see \cite{W1}, \cite{W2}, \cite{W3} for information about Parseval frames on Euclidean spaces).
Spherical needlets  were introduced in  \cite{narc1}, \cite{narc2}, and then 
used for rigorous statistical analysis of 
spherical random  fields in \cite{BKMP1}, \cite{BKMP2}, \cite{mpbb08} and other articles.  This analysis was 
particularly effective in extracting the desired consequences from CMB temperature data.   

The interest in needlets on spheres can be explained by their nearly optimal space-frequency 
localization properties.  These properties of needlets (and other localized bases, such as the
``Mexican needlets'' of \cite{gmfr}) allow one
to perform frequency analysis of signals (functions), even when one only has partial information about them.
 
For example, the CMB models are best analyzed in the frequency domain, where the behavior 
at different multipoles can be investigated separately; on the other hand, partial sky coverage and other missing observations 
make the evaluation of spherical harmonic transforms impossible.

A recent advance in this area was the development of {\em spin needlets} on the sphere
\cite{spinph},\cite{spinmat},\cite{spinstat},\cite{spinnr}, for the purpose of statistical analysis of 
CMB {\em polarization}, which is also expected to have very significant consequences in physics.  

In a different direction, nearly tight frames, which were smooth and highly localized in both space and frequency,
were developed on general smooth compact manifolds
without boundary, in \cite{gmcw}-\cite{gmbes}.  These frames, as in the case of spherical needlets, were
constructed from the kernels of certain functions of the Laplace-Beltrami operator.  (An analogous 
construction had been carried out earlier for stratified Lie groups with lattice subgroups, in 
\cite{gm1}.) 

In this article, we will show that on compact {\em homogeneous} manifolds, one can do better -- 
one can arrange for the frames arising from these methods to actually be {\em Parseval} (and, of course,
highly localized in space and frequency).  We will also show that one can characterize Besov spaces 
through a knowledge of the size of frame coefficients, thereby generalizing results of \cite{narc2}
for the sphere.  (Our results on frame characterizations of Besov spaces are closely related to those in \cite{gmbes}.)

Our frames are a natural generalization of spherical needlets.
They can be also be regarded as analogous to the well-known $\varphi$-transform \cite{FJ1}.

In addition to the fact that one can find Parseval frames, we offer the following motivations for specializing
to the case of homogeneous manifolds.  First, on such manifolds, there is the possibility of 
finding exact formulas for the frame elements.  Secondly, in theoretical physics -- where
many manifolds are considered -- symmetry is of capital importance.  Third, on homogeneous manifolds,
one has the advantage that all of the frame elements at a particular scale can be obtained from each
other through the group action, in the same manner as standard wavelets at a particular scale on the real line
can be obtained from each other by translation. 

Our frames will be band-limited, and hence smooth. 
 One should understand  that the notion of band-limitedness on a compact manifold is not canonical.
 Consider a (connected) compact smooth Riemannian manifold $\bf{M}$, and a smooth elliptic 
self-adjoint positive  differential operator  $A$ on it. It is known that the spectrum of $A$, as an 
operator in the corresponding space $L_{2}(\bf{M})$, is discrete, nonnegative, 
 and accumulates at infinity.  Call the eigenvalues $\lambda_{0} \leq \lambda_{1}\leq.... $,
where we repeat eigenvalues according to their multiplicities. The space $L_{2}(\bf{M})$ has an  orthonormal basis 
$\ u_{\lambda_{0}}, u_{\lambda_{1}},...$ consisting of eigenfunctions of $A$. 

For a fixed operator $A$ we understand the space of $\omega$-band-limited functions ${\mathbf E}_{\omega}(A)$ 
to be the span of all eigenfunctions $u_{\lambda_{j}}$ such that $\lambda_{j}\leq \omega$.

Formally, then, there is great freedom in the notion of band-limitedness.  
However, for the purposes of this article, all these spaces of band-limited functions are  essentially  equivalent,
in the sense that they give rise to the same Besov spaces $B^{\alpha q}_p(\bf{M})$, at least if $\alpha > 0$ and 
$1 \leq p \leq \infty$
(see Theorem  \ref{ernrmeq} below).

The plan of the paper is as follows. 
In section 2, we review some basic facts about compact homogeneous 
manifolds. In section 3, we discuss properties of band-limited functions associated with
a second-order smooth positive elliptic differential operator ${\mathcal L}$.  It is shown in particular that  if $\bf{M}$ is equivariantly embedded into Euclidean space then the span of eigenfunctions of the operator $\mathcal{L}$  is exactly the set of restrictions to $\bf{M}$ of all polynomials in the ambient  space.  In this section we also give several
equivalent definitions of Besov spaces on the manifold.  In one of the definitions, we use 
a global modulus of continuity, constructed through use of certain vector fields on $\bf{M}$. This definition 
is similar to the original definition of Besov spaces on Euclidean spaces and uses just the notion of smoothness. 
Later in the article, in Theorems \ref{ernrmeq} and \ref{beshom}, we
describe the same spaces in terms of approximations by band-limited functions.  
Thus, as one of the consequences of our results, we obtain a new development of one of the oldest topics of classical harmonic 
analysis: the relationships between smoothness and rate of approximations by band-limited functions. 
Specifically, we show that there exists a complete balance between smoothness expressed in terms of modulus of continuity, 
and the rate of approximation by band-limited functions in all spaces $L_{p}(\bf{M}),$ $ \ 1\leq p\leq \infty$, 
as well as other equivalent definitions of Besov spaces on such manifolds. 

In section 4, we describe Plancherel-Polya inequalities (Corollary \ref{completePlPo200})
and in section 5 we obtain cubature formulas with desirable properties  (Theorem \ref{cubformula}).
In these two sections, we do not use any special properties of homogeneous manifolds or the 
second-order positive elliptic differential operator ${\mathcal L}$; the results hold for any smooth compact manifold,
and for any ${\mathcal L}$.

In section 6, we use the homogeneous manifold structure in an essential way to prove a crucial fact, namely that,
for a particular ${\mathcal L}$, 
the product of two band-limited functions of the same bandwidth $\omega$ is also a band-limited function, with a
certain bandwidth $C\omega$, where $C$ is independent of $\omega$.   On the sphere, this property is familiar 
for spherical harmonics; then one may take ${\mathcal L}$ to be the spherical Laplacian, and one may take $C=2$. 
This property of spherical harmonics was used crucially
in the construction of spherical needlets in \cite{narc1}.  
 The generalization to homogeneous manifolds is similarly
needed in our construction of band-limited Parseval frames.   For more general manifolds, it is not clear how to verify this 
property, or even if it is true.   
 It was conjectured in \cite{M} that this  "product" property holds for Laplace-Beltrami operators on analytic compact manifolds.
If ${\bf M} = G/K$ is a homogeneous manifold, we specifically take
${\mathcal L}$ to be the image of the Casimir operator under the differential of the 
quasiregular representation of $G$ in $L_{2}(\bf{M})$ (see section 2).  The operator $-{\mathcal L}$
is a sum of squares of certain vector fields on $\bf{M}$. In some common  
cases, such as compact symmetric spaces of rank one and all compact Lie groups, this operator $\mathcal{L}$ coincides 
with the corresponding Laplace-Beltrami operator.

A number of the results stated, and methods used, in sections 3-6, are from the articles 
\cite{Pes79}-\cite{Pes09}.

In section 7, we review some of the results of \cite{gmcw} - \cite{gmbes}, where the Laplace-Beltrami operator was
used to construct nearly tight frames, which were then used to characterize Besov spaces.  (The Besov space results
in \cite{gmbes} used, in addition to results from \cite{gmcw} and \cite{gmfr}, methods of Frazier-Jawerth \cite{FJ1} 
and Seeger-Sogge \cite{SS}.)  We argue that the results of \cite{gmcw} - \cite{gmbes}
continue to hold if one uses a general ${\mathcal L}$ in place of the Laplace-Beltrami operator.   The arguments of
section 7 do not use any special properties of homogeneous manifolds.  However, the point is that, if we are
on a homogeneous manifold, we are free to use the ${\mathcal L}$ of section 2 in place of the Laplace-Beltrami operator.

Finally, in section 8, by using the results of sections 5 and 6, we construct our Parseval frames on homogeneous manifolds.
By using the results of section 7, we show that they are highly localized, and that one can use them to characterize
Besov spaces, for the full range of the indices.  It is only in the construction of our Parseval frames that we 
use the results of section 6. 

Let us remark that in \cite{narc2}, approximations by polynomials were considered on the sphere,
while in this article, we consider approximations by band-limited functions.
Although it is known \cite{Pes08} that the span of the eigenfunctions of our operator ${\mathcal L}$ is the same as the span 
of all polynomials when one equivariantly embeds the manifold,
the relation between eigenvalues and degrees of polynomials is unknown (at least in 
the general case).  However, it is easy to verify that for compact two-point homogeneous manifolds,  the span of those
eigenfunctions whose eigenvalues are not greater than a value $\ell^{2}, \ \ell\in \mathbb{N},$ is the same as the span of all 
polynomials of degree at most $\ell$. Thus, on compact two-point homogeneous manifolds, our results about approximations by 
band-limited functions can be reformulated in terms of approximations by polynomials.

\section{Compact homogeneous manifolds}

We review some very basic notions of harmonic analysis on
compact homogeneous manifolds \cite{H3}, Ch. II. More details on
this subject can be found, for example, in
 \cite{V}, \cite{Z}.

 Let ${\bf M},\ dim {\bf M}=n,$ be a
compact connected $C^{\infty}$-manifold. One says  that a compact
Lie group $G$ effectively acts on ${\bf M}$ as a group of
diffeomorphisms if:

1)  every element $g\in G$ can be identified with a diffeomorphism
$$
g: {\bf M}\rightarrow {\bf M}
$$
of ${\bf M}$ onto itself and
$$
g_{1}g_{2}\cdot x=g_{1}\cdot(g_{2}\cdot x),\ g_{1}, g_{2}\in G,\
x\in {\bf M},
$$
where $g_{1}g_{2}$ is the product in $G$ and $g\cdot x$ is the
image of $x$ under $g$,

2) the identity $e\in G$ corresponds to the trivial diffeomorphism
\begin{equation}
e\cdot x=x,
\end{equation}

3) for every $g\in G,\ g\neq e,$ there exists a point $x\in {\bf M}$ such
that $g\cdot x\neq x$.

\bigskip

A group $G$ acts on ${\bf M}$ \textit{transitively} if in addition to
1)- 3) the following property holds:

4) for any two points $x,y\in {\bf M}$ there exists a diffeomorphism
$g\in G$ such that
$$
g\cdot x=y.
$$

A \textit{homogeneous} compact manifold ${\bf M}$ is a
$C^{\infty}$-compact manifold on which a compact
Lie group $G$ acts transitively. In this case ${\bf M}$ is necessary of the form $G/K$,
where $K$ is a closed subgroup of $G$. The notation $L_{p}({\bf M}),
1\leq p\leq \infty,$ is used for the usual Banach  spaces
$L_{p}({\bf M},dx), 1\leq p\leq \infty$, where $dx$ is an invariant
measure.

Every element $X$ of the (real) Lie algebra of $G$ generates a vector
field on ${\bf M}$, which we will denote by the same letter $X$. Namely,
for a smooth function $f$ on ${\bf M}$ one has
$$
 Xf(x)=\lim_{t\rightarrow 0}\frac{f(\exp tX \cdot x)-f(x)}{t}
 $$
for every $x\in {\bf M}$. In the future we will consider on ${\bf M}$ only
such vector fields. The translations along integral curves of such
vector fields $X$ on ${\bf M}$  can be identified with a one-parameter
group of diffeomorphisms of ${\bf M}$, which is usually denoted as $\exp
tX, -\infty<t<\infty$. At the same time, the one-parameter group
$\exp tX, -\infty<t<\infty,$ can be treated as a strongly
continuous one-parameter group of operators acting on the space $L_{p}({\bf M}),
1\leq p\leq \infty$.  These operators act on functions according to the
formula
$$
f\rightarrow f(\exp tX\cdot x), \        t\in \mathbb{R}, \
  f\in L_{p}({\bf M}),\        x\in {\bf M}.
$$
 The
generator of this one-parameter group will be denoted by $D_{X,p}$,
and the group itself will be denoted by
$$
e^{tD_{X,p}}f(x)=f(\exp tX\cdot x),\       t\in \mathbb{R}, \
     f\in L_{p}({\bf M}), \       x\in {\bf M}.
$$

According to the general theory of one-parameter groups in Banach
spaces \cite{BB}, Ch.\ I,  the operator $D_{X,p}$ is a closed
operator on every $L_{p}({\bf M}), 1\leq p\leq \infty$.   In order to
simplify notation, we will often write $D_{X}$ in place of
$D_{X, p}$.

If $\textbf{g}$ is the Lie algebra of a compact Lie group $G$ then
(\cite{H3}, Ch.\ II, Proposition 6.6,) it is a direct sum
$\textbf{g}=\textbf{a}+[\textbf{g},\textbf{g}]$, where
$\textbf{a}$ is the center of $\textbf{g}$, and
$[\textbf{g},\textbf{g}]$ is a semi-simple algebra. Let $Q$ be a
positive-definite quadratic form on $\textbf{g}$ which, on
$[\textbf{g},\textbf{g}]$, is opposite to the Killing form. Let
$X_{1},...,X_{d}$ be a basis of
$\textbf{g}$, which is orthonormal with respect to $Q$.
 Since the form $Q$ is $Ad(G)$-invariant, the operator
$$
-X_{1}^{2}-X_{2}^{2}-\    ... -X_{d}^{2},    \ d=dim\ G
$$
is a bi-invariant operator on $G$. This implies in particular that
the
   corresponding operator on $L_{p}({\bf M}), \      1\leq p\leq\infty,$
\begin{equation}
\mathcal{L}=-D_{1}^{2}- D_{2}^{2}- ...- D_{d}^{2}, \>\>\>
       D_{j}=D_{X_{j}}, \        d=dim \ G,\label{Laplacian}
\end{equation}
commutes with all operators $D_{j}=D_{X_{j}}$. This operator
$\mathcal{L}$, which is usually called the Laplace operator, is elliptic, and is
involved in most of the constructions and results of our paper.
However, as we discussed in the introduction, in many of the results 
prior to section 6,
one could use other second order elliptic differential operators.

In the rest of the paper, the notation $\mathbb{D}=\{D_{1},...,
D_{d}\},\>\>\> d=dim \ G,$ will be used for the differential operators
on $L_{p}({\bf M}), 1\leq p\leq \infty,$ which are involved in the
formula (\ref{Laplacian}).

In some situations the operator $\mathcal{L}$ is essentially the
Laplace-Beltrami operator ($-d^*d$) of an invariant metric on ${\bf M}$. This
happens for example in the following cases.

1) If ${\bf M}$ is a $d$-dimensional torus, and $-\mathcal{L}$ is the sum
of squares of partial derivatives.

2) If the manifold ${\bf M}$ is itself a group $G$ which is compact and
semi-simple, then $\mathcal{L}$ is exactly the Laplace-Beltrami
operator of an invariant metric on $G$ (\cite{H2}, Ch. II,
Exercise A4).

3) If ${\bf M}=G/K$ is a compact symmetric space of
  rank one, then the operator
 $\mathcal{L}$ is proportional to the Laplace-Beltrami operator
of an invariant metric on $G/K$. This follows from the fact that, in
the rank one case, every second-order operator  which commutes with
all isometries $x\rightarrow g\cdot x, \>\>\>x\in  {\bf M},\>\>\> g\in
G,$ is proportional to the Laplace-Beltrami operator (\cite{H2},
Ch. II, Theorem 4.11).

Let us stress one more time that in the present paper we  use only
the properties that the operator $\mathcal{L}$ has the form (\ref{Laplacian}) and commutes with all
isometries $g:{\bf M}\rightarrow g\cdot {\bf M},\>\>\> g\in G,\>\>\> $ of ${\bf M}$,
and we do not explore its relation to the Laplace-Beltrami
operator of the invariant metric.

Note that if ${\bf M}=G/K$ is a compact symmetric space, then the number
$d=dim G$ of operators in the formula (\ref{Laplacian}) can be
strictly larger than the dimension $ n=dim {\bf M}$. For example, on a
two-dimensional sphere $\mathbb{S}^{2}$ the Laplace-Beltrami
operator $L_{\mathbb{S}^{2}}$ can be written as
\begin{equation}
\mathcal{L}_{\mathbb{S}^{2}}= -(D_{1}^{2}+ D_{2}^{2}+
D_{3}^{2}),\label{S-Laplacian}
\end{equation}
where $D_{i}, i=1,2,3,$ generates a rotation  in $\mathbb{R}^{3}$
around the coordinate axis $x_{i}$:
\begin{equation}
D_{i}=x_{j}\partial_{k}-x_{k}\partial_{j},
\end{equation}
where $j,k\neq i.$

\section{Function spaces on compact homogeneous manifolds}

 The  operator $\mathcal{L}$  is an elliptic differential operator which
is defined on $C^{\infty}({\bf M})$, and we will use the same notation
$\mathcal{L}$ for its closure from $C^{\infty}({\bf M})$ in $L_{p}({\bf M}),
1\leq p\leq \infty$. In the case $p=2$ this closure is a
self-adjoint positive definite operator on the space $L_{2}({\bf M})$.
The spectrum of this operator is discrete and goes to infinity
$0=\lambda_{0}<\lambda_{1}\leq \lambda_{2}\leq ...$ . Let
$u_{0}, u_{1}, u_{2}, ...$ be a corresponding
complete system of real-valued orthonormal eigenfunctions, and let
$\textbf{E}_{\omega}(\mathcal{L}),\ \omega>0,$ be the span of all
eigenfunctions of $\mathcal{L}$, whose corresponding eigenvalues
are not greater than $\omega$.

We say that a  function $f\in L_{p}(M), 1\leq p\leq \infty,$
belongs to the Bernstein space
$\mathbf{B}_{\omega}^{p}(\mathbb{D}),\
\mathbb{D}=\{D_{1},...,D_{d}\},\ d=dim G,$ if and only if for every
$1\leq i_{1},...i_{k}\leq d$, the following Bernstein inequality
holds:
 \begin{equation}
 \|D_{i_{1}}...D_{i_{k}}f\|_{p}\leq
 \omega^{k}\|f\|_{p},\ k\in \mathbb{N}, \  1\leq p\leq \infty.\label{Bern}
\end{equation}

We say that a  function $f\in L_{2}(M)$
belongs to the Bernstein space
$\mathbf{B}_{\omega}^{2}(\mathcal{L}), $ if and only if for every
$k\in \mathbb{N}$, the following Bernstein inequality holds:
$$
\|\mathcal{L}^{k}f\|_{2}\leq \omega^{2k}\|f\|_{2},\ k\in \mathbb{N}.
$$

Since $\mathcal{L}$ on the space $L_{2}(M)$ is self-adjoint and
positive-definite, there exists a unique positive square root
$\mathcal{L}^{1/2}$. Thus the last inequality is
equivalent to the inequality
$$
\|\mathcal{L}^{k/2}f\|_{2}\leq \omega^{k}\|f\|_{2},\ k\in
\mathbb{N}.
$$
It was shown in \cite{Pes08} that the Bernstein spaces
$\mathbf{B}_{\omega}^{p}(\mathbb{D}),
\mathbf{B}_{\omega}^{p}(\mathcal{L})$ are linear spaces. Moreover, it 
was shown in the same paper that the following equality holds:
$$
\mathbf{B}_{\omega}^{p}(\mathbb{D})
=\mathbf{B}_{\omega}^{q}(\mathbb{D})\equiv
\mathbf{B}_{\omega}(\mathbb{D}),\ \mathbb{D}=\{D_{1},...,D_{d}\},\
d=dim G,
$$
which means that if the Bernstein-type inequalities (\ref{Bern})
are satisfied for a single  $1\leq p\leq \infty$, then they are
satisfied for all $1\leq p\leq \infty$.

 The 
 following embeddings were also proved in \cite{Pes08} which describe relations between
Bernstein spaces $
\textbf{B}_{\omega}(\mathbb{D}),\ \mathbb{D}=\{D_{1},...,D_{d}\},\  d=dim
G,$ and the spaces $\textbf{E}_{\lambda}(\mathcal{L})$ for $
-\mathcal{L}=D_{1}^{2}+ D_{2}^{2}+ ...+ D_{d}^{2},\  d=dim G$:
\begin{equation}
\textbf{E}_{\omega}(\mathcal{L})\subset
\textbf{B}_{\sqrt{\omega}}(\mathbb{D}),\  d=dim G,\ \omega>0.\label{A}
\end{equation}

\begin{equation}
\textbf{B}_{\omega}(\mathbb{D})\subset\textbf{E}_{\omega^{2}d}(\mathcal{L})\subset
\textbf{B}_{\omega\sqrt{d}}(\mathbb{D}),\  d=dim G,\ \omega>0.\label{B}
\end{equation}
These embeddings obviously  imply the equality
$$
\bigcup_{\omega>0} \textbf{B}_{\omega}(\mathbb{D})=\bigcup_{j}
\textbf{E}_{\lambda_{j}}(\mathcal{L}),
$$
which means  \textit{that a function on ${\bf M}$ satisfies a Bernstein
inequality (\ref{Bern}) in a norm of $L_{p}(M), 1\leq p\leq
\infty,$ if and only if it is a linear combination of
eigenfunctions of $\mathcal{L}$.}
As a consequence we have the following Bernstein-Nikolski inequality:
 for every $\varphi \in \textbf{E}_{\omega}(\mathcal{L})$ and 
 $1\leq p\leq q\leq\infty,
$
\begin{equation}
\|\mathcal{L}^{k}\varphi\|_{q}\leq C(M)
\omega^{2k+\frac{n}{p}-\frac{n}{q}}\|\varphi\|_{p}, k\in
\mathbb{N},\ n=dim {\bf M},\ d=dim G, \label{C}
\end{equation}
for a certain constant $C({\bf M})$ which depends only on the manifold.

It is known  (\cite{Z}, Ch. IV)   that every compact Lie
group can be considered to be a closed subgroup of the orthogonal
group $O(\mathbb{R}^{N})$ of some Euclidean space
$\mathbb{R}^{N}$.  For a compact symmetric space $M=G/K$, 
where $G$ is a compact
Lie group, we can identify $\bold{M}$ with the orbit
of a unit vector $v\in \mathbb{R}^{N}$ under the action of a subgroup
of the orthogonal group $O(\mathbb{R}^{N})$ in $\mathbb{R}^{N}$.
In this case $K$ will be  the stationary group of $v$. Such an
embedding of $\bold{M}$ into $\mathbb{R}^{N}$ is called equivariant.

We choose an orthonormal  basis in $\mathbb{R}^{N}$ for which the
first vector is the vector $v$: $e_{1}=v, e_{2},...,e_{N}$. Let $
\textbf{P}_{m}(\bold{M}) $ be the space of restrictions to $M$ of all
polynomials in $\mathbb{R}^{N}$ of degree $m$. This space is
closed in the norm of $L_{p}(\bold{M}), 1\leq p\leq \infty,$ which is
constructed with respect to the $G$-invariant measure on $\bold{M}$.

Let $T$ be the quasi-regular representation of $G$ in the space
$L_{p}(\bold{M}),\ 1\leq p\leq \infty$ . In other words, if $ f\in
L_{p}(\bold{M}),\ g\in G,\ x\in \bold{M}$, then
\begin{equation}
\label{quasi}
\left(T(g)f\right)(x)=f(g^{-1}x).
\end{equation}
The Lie algebra $\textbf{g}$ of the group $G$ is formed by those $N\times N$
skew-symmetric matrices $X$ for which $\exp tX\in  G$ for all
$t\in \mathbb{R}$. The scalar product in $\textbf{g}$ is given by
the formula
$$
<X_{1},X_{2}>=\frac{1}{2}tr(X_{1}X_{2}^{t})=-\frac{1}{2}tr(X_{1}X_{2}),\ 
X_{1},X_{2}\in \textbf{g}.
$$
Let $X_{1},X_{2},...,X_{d}$ be an orthonormal basis of
$\textbf{g},\ \dim \textbf{g}=d,$ and $D_{1}, D_{2}, ...,D_{d}$ be
the corresponding infinitesimal operators of the quasi-regular
representation of $G$ in $L_{p}(\bold{M}), 1\leq p\leq\infty$.

The following relations were proved in \cite{Pes08}:

\begin{equation}
\textbf{P}_{m}(\bold{M})\subset \textbf{B}_{m}(\mathbb{D})\subset
\textbf{E}_{m^{2}d}(\mathcal{L})\subset
\textbf{B}_{m\sqrt{d}}(\mathbb{D}),\ d=dim G,\ m\in \mathbb{N},\label{DD}
\footnote{We would like to point out that some of the indices in our formulas  (\ref{B}),  (\ref{DD})
are  different from the indices in the  corresponding formulas in \cite{Pes08}.}
\end{equation}
and
\begin{equation}
\bigcup _{m}\textbf{P}_{m}(\bold{M})=\bigcup_{\omega}
\textbf{B}_{\omega}(\mathbb{D})=\bigcup_{j}
\textbf{E}_{\lambda_{j}}(\mathcal{L}),\ m\in \mathbb{N,}
\end{equation}
where $ \textbf{P}_{m}(\bold{M})$ is the space of restrictions to $\bold{M}$ of polynomials of degree at most $m$.

Let $B(x,r)$ be a metric ball on ${\bf M}$ whose center is $x$ and
radius is $r$. The following important Lemma can be found  in  \cite{Pes00},
\cite{Pes04a}.

\begin{lem}
For any Riemannian manifold of bounded geometry ${\bf M}$  there exists
a natural number $N_{{\bf M}}$, such that  for any sufficiently small $\rho>0$
there exists a set of points $\{y_{\nu}\}$ such that:
\begin{enumerate}
\item the balls $B(y_{\nu}, \rho/4)$ are disjoint,

\item  the balls $B(y_{\nu}, \rho/2)$ form a cover of ${\bf M}$,

\item  the multiplicity of the cover by balls $B(y_{\nu}, \rho)$
is not greater than $N_{{\bf M}}.$
\end{enumerate}\label{covL}
\end{lem}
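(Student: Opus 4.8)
The plan is to produce $\{y_{\nu}\}$ as a maximal $\rho/2$-separated set and then read off the three properties, the last one from volume comparison estimates supplied by the bounded-geometry hypothesis. First I would invoke Zorn's lemma to choose a maximal subset $\{y_{\nu}\}\subset {\bf M}$ subject to the constraint that $d(y_{\nu},y_{\mu})\geq \rho/2$ for $\nu\neq\mu$. Property (1) is then immediate: if some $z$ lay in $B(y_{\nu},\rho/4)\cap B(y_{\mu},\rho/4)$ with $\nu\neq\mu$, the triangle inequality would force $d(y_{\nu},y_{\mu})<\rho/2$, contradicting the separation; hence the balls $B(y_{\nu},\rho/4)$ are pairwise disjoint. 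Property (2) follows from maximality: any $x\in {\bf M}$ must satisfy $d(x,y_{\nu})<\rho/2$ for at least one $\nu$, since otherwise $x$ could be adjoined to the set without violating separation, contradicting maximality; thus the balls $B(y_{\nu},\rho/2)$ cover ${\bf M}$.

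The substance is property (3). Fix $x\in {\bf M}$ and let $I_{x}=\{\nu : x\in B(y_{\nu},\rho)\}$ index the balls of the $\rho$-cover containing $x$, so that the local multiplicity of the cover at $x$ is $|I_{x}|$. For $\nu\in I_{x}$ we have $d(x,y_{\nu})<\rho$, whence by the triangle inequality $B(y_{\nu},\rho/4)\subset B(x,5\rho/4)$. Since the balls $B(y_{\nu},\rho/4)$ are disjoint by (1), I would compare volumes:
$$
\sum_{\nu\in I_{x}}\mathrm{vol}\,B(y_{\nu},\rho/4)\ \leq\ \mathrm{vol}\,B(x,5\rho/4).
$$

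Here the bounded-geometry hypothesis enters decisively, and this is the step I expect to be the main obstacle to make uniform in $x$ and $\rho$. Bounded geometry --- a lower bound on the injectivity radius together with two-sided bounds on the curvature --- yields, for all sufficiently small radii $r$ and uniformly in the center, comparison estimates $c_{1}r^{n}\leq \mathrm{vol}\,B(p,r)\leq c_{2}r^{n}$ with constants $c_{1},c_{2}$ depending only on ${\bf M}$: the upper bound comes from the Bishop--Gromov inequality applied to the lower Ricci bound, and the lower bound from the fact that within the injectivity radius the exponential map furnishes normal coordinates in which the metric is uniformly comparable to the Euclidean one. Inserting these into the displayed inequality gives $|I_{x}|\cdot c_{1}(\rho/4)^{n}\leq c_{2}(5\rho/4)^{n}$, whence $|I_{x}|\leq (c_{2}/c_{1})\,5^{n}=:N_{{\bf M}}$, a bound independent of $x$ and of $\rho$. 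Taking the supremum over $x$ proves (3). I would finally remark that when ${\bf M}$ is compact --- the case used throughout this paper --- bounded geometry is automatic and the volume comparison constants may be extracted directly from compactness, so the argument simplifies accordingly.
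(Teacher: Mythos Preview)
Your argument is correct and is in fact the standard one: a maximal $\rho/2$-separated net gives (1) and (2) immediately, and the multiplicity bound (3) follows from the uniform two-sided volume estimates $c_{1}r^{n}\leq \mathrm{vol}\,B(p,r)\leq c_{2}r^{n}$ that bounded geometry guarantees for small $r$. The paper itself does not supply a proof of this lemma; it simply cites \cite{Pes00}, \cite{Pes04a}, where exactly this packing-and-volume-comparison argument is carried out. So there is nothing to compare---your write-up is essentially the proof one finds in those references.
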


\begin{defn}
Any set of points $M_{\rho}=\{y_{\nu}\}$ which is as described in
Lemma \ref{covL} will be called a metric
$\rho$-lattice.\label{D1}
\end{defn}

To define Sobolev spaces,  we fix a  cover $B=\{B(y_{\nu}, r_{0})\}$ of $\bold{M}$ of finite
multiplicity $N(M)$ (see Lemma \ref{covL})
\begin{equation}
\bold{M}=\bigcup B(y_{\nu}, r_{0}),\label{cover}
\end{equation}
where $B(y_{\nu}, r_{0})$ is a  ball centered at $y_{\nu}\in \bold{M}$ of radius
$r_{0}\leq \rho_{\bold{M}},$ contained in a coordinate chart, and consider a fixed partition of unity
$\Psi=\{\psi_{\nu}\}$ subordinate to this cover. The Sobolev
spaces $W^{k}_{p}(\bold{M}), k\in \mathbb{N}, 1\leq p<\infty,$ are
introduced as the completion of $C^{\infty}(\bold{M})$ with respect
to the norm
\begin{equation}
\|f\|_{W^{k}_{p}(\bold{M})}=\left(\sum_{\nu}\|\psi_{\nu}f\|^{p}
_{W^{k}_{p}(B(y_{\nu}, r_{0}))}\right) ^{1/p}.\label{Sobnorm}
\end{equation}
Any two such norms are equivalent.

Now we turn to Besov spaces on $\bold{M}$.
Suppose that
$-\infty < \alpha < \infty$ and $0 < p,q \leq \infty$. 
We use the notation for inhomogeneous Besov spaces $B_p^{\alpha q}$ on $\bf{R}^{n}$ from \cite{FJ1}.  
Thus, on $\bf{R}^{n}$, one takes
any $\Phi \in {\mathcal S}$ supported in the closed unit ball, which does not vanish anywhere in the ball of
radius $5/6$ centered at $0$.  One also takes functions $\varphi_{\nu} \in {\mathcal S}$ for $\nu \geq 1$, 
supported in the annulus $\{\xi: 2^{\nu-1} \leq |\xi| \leq 2^{\nu+1}\}$, satisfying 
$|\varphi_{\nu}(\xi)| \geq c > 0$ for $3/5 \leq 2^{-\nu}|\xi| \leq 5/3$ and also 
$|\partial^{\gamma} \varphi_{\nu}| \leq c_{\gamma} 2^{-\nu \gamma}$ for every multiindex $\gamma$.
The Besov space $B_p^{\alpha q}(\bf{R}^{n})$ is then the space of $F \in {\mathcal S}'(\bf{R}^{n})$ such that

\begin{equation}
\label{besrndf}
\|F\|_{B_p^{\alpha q}} = \|\check{\Phi}*F\|_{L_p} + 
\left(\sum_{\nu = 0}^{\infty} (2^{\nu \alpha}\|\check{\varphi}_{\nu}*F\|_{L_p})^q \right)^{1/q} < \infty.
\end{equation}

(Here we use the usual conventions if $p$ or $q$ is $\infty$.
The definition of $B_p^{\alpha q}(\bf{R}^{n})$ is independent of the choices of 
$\Phi, \varphi_{\nu}$ (\cite{Peet}, page 49).  Moreover, $B_p^{\alpha q}(\bf{R}^{n})$ is a quasi-Banach
space, and the inclusion $B_p^{\alpha q} \subseteq {\mathcal S}'$ is continuous
(\cite{Trieb}, page 48).  In particular the space $B^{\alpha}_{\infty,\infty}(\bf{R}^{n}) =  {\mathcal C}^{\alpha}(\bf{R}^{n})$,
which is the usual H\"older space if $0 < \alpha < 1$, or in general a H\"older-Zygmund space for $\alpha > 0$ 
(\cite{Trieb}, page 51).  It is not hard to see, by using the definition and the Fourier transform, that
if $K \subseteq \bf{R}^{n}$ is compact, and if $N$ is sufficiently large, then
\begin{equation}
\label{ccNbes}
\{F \in C^N: \mbox{ supp}F \subseteq K\} \subseteq B_p^{\alpha q}
\end{equation}
where the inclusion map is continuous if we regard the left side as a subspace of $C^N$.

If $\eta: \bf{R}^{n} \rightarrow \bf{R}^{n}$ is a diffeomorphism which equals
the identity outside a compact set, then one can define $F \circ \eta$ for $F \in B_p^{\alpha q}(\bf{R}^{n})$,
and the map $F \rightarrow F \circ \eta$ is bounded on the Besov spaces (\cite{Trieb}, chapter 2.10).
These facts then enable one
to define $B_p^{\alpha q}({\bf M})$: let $(W_i, \chi_i)$ be a finite atlas on ${\bf M}$ with charts $\chi_i$ 
mapping $W_i$ into the unit ball on $\bf{R}^{n}$, and suppose $\{\zeta_i\}$ is a partition of unity subordinate
to the $W_i$.  Then one defines $B_p^{\alpha q}({\bf M})$ to be the space of distributions $F$ on 
${\bf M}$ for which 

\[ \|F\|_{B_p^{\alpha q}({\bf M})} = \sum_i \|(\zeta_i F) \circ \chi_i^{-1}\|_{B_p^{\alpha q}({\bf R}^n)} < \infty. \]

This definition does not depend on the choice of charts or partition of unity (\cite{Triebman}).\\

It was also shown in  \cite{Trieb1} , \cite{Triebman}  that the Besov space $B_p^{\alpha q}({\bf M})$ is exactly the interpolation space
$(L_{p}(\bold{M}),W^{r}_{p}(\bold{M}))^{K}_{\alpha/r,q},\ 0<\alpha<r\in \mathbb{N},\
1\leq p, q\leq \infty,$ where $K$ is the Peetre interpolation
functor.

Now we are going to describe Besov spaces in different terms. We consider the system of
vector fields $\mathbb{D}=\{D_{1},...,D_{d}\},\ d=dim G, $\ on
$\bold {M} =G/K,$ which was described above. Since the vector
fields $\mathbb{D}=\{D_{1},...,D_{d}\}$ generate the tangent space
at every point of $\bf M$, and $\bf M$ is compact, it is clear that the
Sobolev norm (\ref{Sobnorm}) is equivalent to the norm
\begin{equation}
\|f\|_{p}+\sum_{j=1}^{k} \sum_{1\leq i_{1},...,i_{j}\leq
d}\|D_{i_{1}}...D_{i_{j}}f\|_{p},\ 1\leq p\leq
\infty.\label{mixednorm}
\end{equation}
 Using the closed graph theorem and
the fact that each $D_{i}$ is a closed operator in $L_{p}({\bf M}),\ 
1\leq p \leq \infty$,\ it is easy to show that the norm
(\ref{mixednorm}) is equivalent to the norm
\begin{equation}
|||f|||_{k,p}=\|f\|_{p}+\sum_{1\leq i_{1},..., i_{k}\leq
d}\|D_{i_{1}}...D_{i_{k}}f\|_{p},\ 1\leq p\leq
\infty.\label{mixednorm2}
\end{equation}

For the same operators as above ($D_{1},...,D_{d},\ d=dim \ G$),  let $T_{1},..., T_{d}$
be the corresponding one-parameter groups of translation along integral
curves of the corresponding vector  fields i.e.
 \begin{equation}
 T_{j}(\tau)f(x)=f(\exp \tau X_{j}\cdot x),
 x\in \bold{M}, \tau \in \mathbb{R}, f\in L_{2}(\bold{M});
 \end{equation}
 here $\exp \tau X_{j}\cdot x$ is the integral curve of the vector field
 $X_{j}$ which passes through the point $x\in \bold{M}$.
 The modulus of continuity is introduced as
\begin{equation}
\Omega_{p}^{r}( s, f)= $$ $$\sum_{1\leq j_{1},...,j_{r}\leq
d}\sup_{0\leq\tau_{j_{1}}\leq s}...\sup_{0\leq\tau_{j_{r}}\leq
s}\|
\left(T_{j_{1}}(\tau_{j_{1}})-I\right)...\left(T_{j_{r}}(\tau_{j_{r}})-I\right)f\|_{L_{p}(\bold{M})},\label{M}
\end{equation}
where $f\in L_{p}(\bold{M}),\ r\in \mathbb{N},  $ and $I$ is the
identity operator in $L_{p}(\bold{M}).$   We consider the space of all functions in $L_{p}(\bold{M})$ for which the
following norm is finite:
\begin{equation}
\|f\|_{L_{p}(\bold{M})}+\left(\int_{0}^{\infty}(s^{-\alpha}\Omega_{p}^{r}(s,
f))^{q} \frac{ds}{s}\right)^{1/q} , 1\leq p,q<\infty,\label{BnormX}
\end{equation}
with the usual modifications for $q=\infty$.

 The following Theorem follows from general results of the second author about interpolation
 in spaces of representations of Lie groups  \cite{Pes79}-\cite{Pes88}: 
 
 \begin{thm}
 
 The norm of the Besov space $B_p^{\alpha q}({\bf M})=(L_{p}(\bold{M}),W^{r}_{p}(\bold{M}))^{K}_{\alpha/r,q},\ 
 0<\alpha<r\in \mathbb{N},\ 
1\leq p, q\leq \infty,$ is equivalent to the norm (\ref{BnormX}). Moreover, the norm
(\ref{BnormX}) is equivalent to the norm
\begin{equation}
\|f\|_{W_{p}^{[\alpha]}(\bold{M})}+\sum_{1\leq j_{1},...,j_{[\alpha] }\leq d}
\left(\int_{0}^{\infty}\left(s^{[\alpha]-\alpha}\Omega_{p}^{1}
(s,D_{j_{1}}...D_{j_{[\alpha]}}f)\right)^{q}\frac{ds}{s}\right)^{1/q}\label{nonint}
\end{equation}
if $\alpha$ is not integer ($[\alpha]$ is its integer part).  If
$\alpha=k\in \mathbb{N}$ is an integer then the norm
(\ref{BnormX}) is equivalent to the norm (Zygmund condition)
\begin{equation}
\|f\|_{W_{p}^{k-1}(\bold{M})}+ \sum_{1\leq j_{1}, ... ,j_{k-1}\leq d }
\left(\int_{0}^{\infty}\left(s^{-1}\Omega_{p}^{2}(s,
D_{j_{1}}...D_{j_{k-1}}f)\right)
 ^{q}\frac{ds}{s}\right)^{1/q}.\label{integer}
\end{equation}
\end{thm}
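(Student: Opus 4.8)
The plan is to deduce the theorem from the abstract theory of real interpolation between a Banach space and the domain of a system of generators of one-parameter groups, specializing to $X = L_p({\bf M})$ and the isometric groups $T_j(\tau)$ appearing in (\ref{mixednorm2}). The starting point is the Peetre $K$-functional description of the interpolation space: with $\theta = \alpha/r$,
\begin{equation*}
\|f\|_{(L_p,W^r_p)^K_{\theta,q}} \sim \left(\int_0^\infty \left(t^{-\theta} K(t,f;L_p,W^r_p)\right)^q \frac{dt}{t}\right)^{1/q}.
\end{equation*}
Hence everything reduces to the two-sided estimate
\begin{equation*}
K(s^r, f; L_p, W^r_p) \sim \|f\|_p + \Omega^r_p(s,f), \qquad 0 < s \le 1,
\end{equation*}
after which the substitution $t = s^r$ turns the weight $t^{-\theta}$ into $s^{-\alpha}$ and produces exactly the norm (\ref{BnormX}).

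For the first direction of this estimate I would argue directly. Splitting $f = (f-g) + g$ for an arbitrary $g \in W^r_p$, each factor $T_{j}(\tau) - I$ has operator norm at most $2$ on $L_p$ (the groups are isometric, since the measure is $G$-invariant), so $\Omega^r_p(s, f-g) \le 2^r\|f-g\|_p$; on the other hand, writing $(T_j(\tau)-I)g = \int_0^\tau T_j(\sigma)D_j g\,d\sigma$ and iterating gives $\Omega^r_p(s,g) \le C s^r \sum \|D_{j_1}\cdots D_{j_r}g\|_p \le Cs^r\|g\|_{W^r_p}$ by (\ref{mixednorm2}). Taking the infimum over $g$ yields $\Omega^r_p(s,f) \le C\,K(s^r,f)$. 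The reverse inequality is the substantial point: one must exhibit, for each $s$, an approximant $g_s \in W^r_p$ with $\|f - g_s\|_p + s^r\|g_s\|_{W^r_p} \le C(\|f\|_p + \Omega^r_p(s,f))$. Here I would build $g_s$ by iterated Steklov-type averages along the orbits of the groups, e.g. for a single direction $A_s f = s^{-1}\int_0^s T_j(\tau)f\,d\tau$, whose error $f - A_s f$ and whose derivative $D_j A_s f = s^{-1}(T_j(s)-I)f$ are both controlled by the modulus, and then composing such operators over the $d$ directions. This is precisely the construction underlying the interpolation results of the second author in \cite{Pes79}-\cite{Pes88}, which I would invoke in their abstract representation-theoretic form; the delicate part is that the $D_j$ do not commute, so composing the one-directional averages produces cross terms, and one must check that all the mixed products $D_{j_1}\cdots D_{j_r}$ occurring in (\ref{mixednorm2}) remain controlled uniformly in $s$.

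Granting the equivalence of (\ref{BnormX}) with the interpolation norm, I would obtain the alternative descriptions (\ref{nonint}) and (\ref{integer}) by a reiteration argument that lowers the order of smoothness. For non-integer $\alpha$ with integer part $[\alpha]$, one peels off $[\alpha]$ derivatives: applying the characterization at smoothness level $\alpha - [\alpha] \in (0,1)$ to each $D_{j_1}\cdots D_{j_{[\alpha]}}f$ (for which a first-order modulus $\Omega^1_p$ suffices, the exponent being below $1$) and combining with the $W^{[\alpha]}_p$ term gives (\ref{nonint}); the commutation of $T_j$ with $D_j$ along the same curve, together with the mapping properties of the $D_j$ on the Sobolev scale, is what makes this manipulation legitimate. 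For integer $\alpha = k$ the first-order modulus fails at the endpoint $\theta = 1$, and one must pass to the second-order modulus $\Omega^2_p$ applied to $(k-1)$-fold derivatives: this is the classical Zygmund phenomenon, and the needed one-directional estimate $\|f-A_sf\|_p + s\|D_j A_s f\|_p \le C\,\Omega^2_p(s,f)$ is obtained by replacing the simple Steklov mean with its symmetrized (second-difference) version.

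The main obstacle throughout is the lower bound in the $K$-functional estimate, i.e. the construction of good approximants in the presence of several non-commuting directional groups; once that is in place for the abstract couple $(L_p({\bf M}), W^r_p({\bf M}))$, the remainder is bookkeeping with the change of variables and the reiteration step. This is exactly the content supplied by the cited works of the second author, so in the write-up I would state the abstract interpolation theorem for the representation of $G$ on $L_p({\bf M})$ and then verify that our $T_j$, $D_j$, $W^r_p$, and $\Omega^r_p$ are the asserted special case.
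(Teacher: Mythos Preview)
Your proposal is correct and aligns with the paper's approach: the paper gives no proof beyond the sentence ``follows from general results of the second author about interpolation in spaces of representations of Lie groups \cite{Pes79}--\cite{Pes88}'', and your plan is precisely to invoke that abstract interpolation theorem and check that the quasi-regular representation on $L_p({\bf M})$ with generators $D_1,\ldots,D_d$ satisfies its hypotheses. Your additional sketch of the underlying machinery (the $K$-functional estimate via iterated Steklov means, and the reiteration/Zygmund step for (\ref{nonint}) and (\ref{integer})) accurately reflects what is in those cited papers, so there is nothing to correct.
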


When $p=2$ the first of these norms can be changed to

\begin{equation}
\|f\|_{H^{[\alpha]}(\bold{M})}+\sum_{1\leq j_{1},...,j_{[\alpha] }\leq d}
\left(\int_{0}^{\infty}\left(s^{[\alpha]-\alpha}\Omega_{2}^{1}
(s,      \mathcal{L}^{[\alpha]/2}       f)\right)^{q}\frac{ds}{s}\right)^{1/q}
\end{equation}
and the second to

\begin{equation}
\|f\|_{H^{k-1}(\bold{M})}+ \sum_{1\leq j_{1}, ... ,j_{k-1}\leq d }
\left(\int_{0}^{\infty}\left(s^{-1}\Omega_{2}^{2}(s,
     \mathcal{L}^{(k-1)/2}      f)\right)
 ^{q}\frac{ds}{s}\right)^{1/q}.
\end{equation}
For a function $f\in L_{2}(\bold{M})$ we introduce a notion of best
approximation
\begin{equation}
\mathcal{E}(f,\omega)=\inf_{g\in
\textbf{E}_{\omega}(\mathcal{L})}\|f-g\|_{2}=\left(\sum_{\lambda_{j}\geq\omega}c_{j}(f)^{2}\right)^{1/2},
\end{equation}
where $c_{j}=\left<f,u_{\lambda_{j}}\right>$ are the Fourier coefficients of $f$.

A description of  Besov spaces  $B_{2}^{\alpha q}({\bf M}),\ \alpha>0,\ 1\leq q\leq\infty,$ in terms
of the best approximation $\mathcal{E}(f,\omega)$ was given in  \cite{Pes09}, Theorems 1.1 and 1.2.
We will obtain a generalization of these results  in Theorem \ref{ernrmeq} below.

\section{Plancherel-Polya ($=$Marcinkiewicz-Zygmund) inequalities}

In this section, we again consider a compact homogeneous Riemannian manifold ${\bf M}$, and the
elliptic self-adjoint positive definite operator $\mathcal{L}$ on
$L_{2}({\bf M})$, which was introduced in (\ref{Laplacian}).  However, 
the results of this section hold for general 
${\bf M}$ and ${\mathcal L}$ (if at least
${\bf M}$ is smooth, compact   and
${\mathcal L}$ is a positive elliptic self-adjoint second-order differential operator on ${\bf M}$).

 Since the
operator $\mathcal{L}$ is of order two, the dimension
$\mathcal{N}_{\omega}$ of the space ${\mathbf E}_{\omega}(\mathcal{L})$ is
given asymptotically by Weyl's formula \cite{Sogge93}
\begin{equation}
\mathcal{N}_{\omega}({\bf M})\asymp C({\bf M})\omega^{n/2},\label{W}
\end{equation}
where $n=dim {\bf M}$.

 The next two theorems were proved in \cite{Pes00}, \cite{Pes04b},
for a Laplace-Beltrami operator on a Riemannian manifold of
bounded geometry,  but their proofs go through for any
$C^{\infty}$-bounded uniformly elliptic self-adjoint positive
definite differential operator on ${\bf M}$. 
In what follows the notation $n=dim\  {\bf M}$ is used.

\begin{thm}  There exist constants $\>\>C_{1}=C_{1}({\bf M},\mathcal{L})>0\>\>$ and
 $\>\>\>
\rho_{0}({\bf M},\mathcal{L})>0,$ such that for any  natural number $m>n/2$,
any $0<\rho<\rho_{0}({\bf M},\mathcal{L})$,  and any $\rho$-lattice
$M_{\rho}=\{x_{k}\}$,  the following inequality holds:
$$\left(\sum _{x_{k}\in M_{\rho}}|f(x_{k})|^{2}\right)^{1/2}\leq
 C_{1}\rho^{-n/2}\|f\|_{H^{m}({\bf M})},
 $$\label{MT}
for all  $f\in H^{m}({\bf M}), \>\>\> m>n/2,\>\>\> m\in
\mathbb{N}.$\label{T1}
\end{thm}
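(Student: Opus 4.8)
The plan is to reduce the global sampling inequality to a local Sobolev embedding on each ball of the lattice, and then to sum using the disjointness in part (1) of Lemma \ref{covL}. First I would fix a finite atlas of normal-coordinate charts on ${\bf M}$ and choose $\rho_{0}$ so small that for every $x\in{\bf M}$ the geodesic ball $B(x,\rho_{0})$ lies inside a single chart, on which the metric, its inverse, and the volume element are uniformly comparable to the Euclidean ones (this is where compactness, i.e.\ bounded geometry, enters). The key local estimate is that for $m>n/2$ there is a constant $C=C({\bf M},\mathcal{L})$, independent of the center and of $\rho<\rho_{0}$, such that
\begin{equation}
|f(x_{k})|^{2} \le C\,\rho^{-n}\,\|f\|_{H^{m}(B(x_{k},\rho/4))}^{2}. \label{localsob}
\end{equation}

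To prove (\ref{localsob}), I would start from the classical Sobolev embedding $\sup_{B(0,1)}|g|^{2}\le C_{0}\|g\|_{H^{m}(B(0,1))}^{2}$ on the unit Euclidean ball, which is valid since $m>n/2$, evaluated at the center $0$. Transferring to ${\bf M}$ by normal coordinates centered at $x_{k}$ and rescaling via $g(y)=f(\exp_{x_{k}}((\rho/4)y))$ for $y\in B(0,1)$, the chain rule puts a factor $(\rho/4)^{|\beta|}$ on each derivative $\partial^{\beta}g$, while the change of variables contributes $(\rho/4)^{-n}$ from the volume element. Hence
\[
\|g\|_{H^{m}(B(0,1))}^{2} \le C\sum_{|\beta|\le m}\rho^{2|\beta|-n}\,\|\partial^{\beta}f\|_{L_{2}(B(x_{k},\rho/4))}^{2}.
\]
Because $\rho<\rho_{0}\le 1$ forces $\rho^{2|\beta|}\le 1$, and the uniform comparability of the metric keeps every constant independent of $x_{k}$, the right-hand side is at most $C\rho^{-n}\|f\|_{H^{m}(B(x_{k},\rho/4))}^{2}$, which is exactly (\ref{localsob}).

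Finally I would sum (\ref{localsob}) over the lattice. By part (1) of Lemma \ref{covL} the balls $B(x_{k},\rho/4)$ are pairwise disjoint, so summing the local $H^{m}$-pieces counts each point of ${\bf M}$ at most once, giving
\[
\sum_{x_{k}\in M_{\rho}}\|f\|_{H^{m}(B(x_{k},\rho/4))}^{2} \le \|f\|_{H^{m}({\bf M})}^{2}.
\]
Combining this with (\ref{localsob}) yields $\sum_{k}|f(x_{k})|^{2}\le C\rho^{-n}\|f\|_{H^{m}({\bf M})}^{2}$, which is the asserted inequality with $C_{1}=\sqrt{C}$. (Alternatively, one may use the balls $B(x_{k},\rho)$ and part (3) of Lemma \ref{covL}, at the cost of an extra factor $N_{{\bf M}}$.)

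The main obstacle is establishing the uniform scaling in (\ref{localsob}): one must verify that the Sobolev embedding constant can be taken independent of the center $x_{k}$ and of $\rho$, carefully tracking how dilation by $\rho/4$ in normal coordinates distributes the powers of $\rho$ across the derivative terms and the volume element. Compactness of ${\bf M}$ is precisely what makes this uniformity possible, since it reduces the verification to finitely many charts on which the Riemannian data are controlled; the remaining point values of $f$ are well defined because $m>n/2$ gives the continuous embedding $H^{m}({\bf M})\hookrightarrow C({\bf M})$.
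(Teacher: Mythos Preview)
Your argument is correct and is essentially the standard one. The paper itself does not supply a proof of this theorem; it simply states that the result was established in \cite{Pes00}, \cite{Pes04b} for the Laplace--Beltrami operator and that the argument carries over to any smooth positive elliptic self-adjoint second-order operator. The scaled local Sobolev inequality you isolate in your display (\ref{localsob}) is precisely the $p=2$ case of the estimate the paper later records as (\ref{basicineq}) (attributed to Adams \cite{Ad}, Lemma~6.19), so your key local step matches what the authors themselves invoke elsewhere.

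One point deserves a word of care in your summation step. You write
\[
\sum_{x_{k}\in M_{\rho}}\|f\|_{H^{m}(B(x_{k},\rho/4))}^{2} \le \|f\|_{H^{m}({\bf M})}^{2},
\]
but the local norms on the left are computed in normal coordinates centred at $x_{k}$, so the derivative operators $\partial^{\beta}$ vary with $k$ and the inequality is not literally additivity over disjoint sets. The fix is exactly what you hint at with ``uniform comparability of the metric'': replace the coordinate derivatives by a fixed global family (e.g.\ the vector fields $D_{1},\dots,D_{d}$ giving the equivalent Sobolev norm (\ref{mixednorm2}), or covariant derivatives), at the cost of a constant depending only on ${\bf M}$. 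Then disjointness of the balls $B(x_{k},\rho/4)$ gives
\[
\sum_{k}\|D_{i_{1}}\cdots D_{i_{j}}f\|_{L_{2}(B(x_{k},\rho/4))}^{2}\le \|D_{i_{1}}\cdots D_{i_{j}}f\|_{L_{2}({\bf M})}^{2}
\]
for each fixed string $(i_{1},\dots,i_{j})$, and the global bound follows. With this clarification your proof is complete.
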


\begin{thm} There exist constants
$C_{2}=C_{2}({\bf M},\mathcal{L})>0,$ and $\>\>\>
\rho_{0}({\bf M},\mathcal{L})>0,$ such that for any  natural $m>n/2$,
any $0<\rho<\rho_{0}({\bf M},\mathcal{L})$, and any $\rho$-lattice
$M_{\rho}=\{x_{k}\}$ the following  inequality holds

\begin{equation}
\|f\|_{H^{m}({\bf M})}\leq C_{2}\left\{\rho^{n/2}\left(\sum_{x_{k}\in M_{\rho}}
|f(x_{k})|^{2}\right)^{1/2}+\rho^{2m}\|\mathcal{L}^{m}f\|\right\},\>\>\>
m\in \mathbb{N},\>\>\> m>n/2.\label{rightPP}
\end{equation}
\end{thm}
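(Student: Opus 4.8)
The plan is to prove the lower bound \eqref{rightPP} by combining a local Sobolev estimate on each chart with the covering properties of the $\rho$-lattice, and then controlling the ``error'' between the sampled values and the true Sobolev norm by means of a Bernstein-type remainder involving $\mathcal{L}^m$. The underlying principle is that of a sampling (Plancherel--P\'olya) inequality: if the mesh $\rho$ is fine relative to the oscillation scale of $f$, then the discrete $\ell^2$-sum of point values $\rho^{n/2}(\sum_k |f(x_k)|^2)^{1/2}$ is comparable to $\|f\|_{L_2}$, and the higher-order term $\rho^{2m}\|\mathcal{L}^m f\|$ absorbs the contribution of the high-frequency part of $f$ that the sampling at scale $\rho$ cannot see.

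First I would reduce to a local statement. Using the finite cover \eqref{cover} by balls $B(y_\nu, r_0)$ of multiplicity $N(\mathbf{M})$ and the partition of unity $\{\psi_\nu\}$, and recalling the definition \eqref{Sobnorm} of the Sobolev norm, it suffices to estimate each $\|\psi_\nu f\|_{W^m_p(B(y_\nu,r_0))}$ (with $p=2$) by a discrete sum over those lattice points $x_k$ lying in an enlarged ball, plus a remainder. On a single coordinate chart, pulled back to a fixed region in $\mathbb{R}^n$, this is the classical statement that for a function in $H^m$ one controls the Sobolev norm over a ball of radius $r_0$ by a weighted sum of its values on a $\rho$-dense set together with the $L_2$-norm of its $m$-th order derivatives times $\rho^{2m}$. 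The key tool here is a local polynomial-reproduction / Taylor-remainder argument: on each small ball one approximates $f$ by a polynomial of degree $<m$ interpolating (in an averaged sense) the sampled values, uses the unisolvence guaranteed by properties (1)--(2) of Lemma \ref{covL} to recover the $L_2$-norm of that polynomial from the point values, and bounds the approximation error by the integral of $|\nabla^m f|$ over the ball, which scales like $\rho^{2m}$ after summing.

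Next I would reassemble the local estimates. Summing the squared local bounds over $\nu$, the discrete terms combine (using the bounded multiplicity $N(\mathbf{M})$, so that each $x_k$ is counted a bounded number of times) into the global sum $\rho^n \sum_{x_k} |f(x_k)|^2$, and the remainder terms combine into $\rho^{4m}\sum_\nu \|\nabla^m(\psi_\nu f)\|^2_{L_2}$. The Leibniz rule distributes derivatives of the cutoff $\psi_\nu$, producing lower-order terms, but by ellipticity of $\mathcal{L}$ the full $H^m$-norm $\|f\|_{H^m(\mathbf{M})}$ dominates all mixed derivatives up to order $m$; conversely the top-order piece is controlled by $\|\mathcal{L}^m f\|$ up to lower-order terms which can be absorbed. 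The standard way to close this is an absorption/interpolation step: write $\|f\|_{H^m}^2 \lesssim \rho^n\sum|f(x_k)|^2 + \rho^{4m}\|f\|_{H^{2m}}^2$, then invoke elliptic regularity $\|f\|_{H^{2m}} \lesssim \|\mathcal{L}^m f\| + \|f\|_{L_2}$ together with an interpolation inequality $\|f\|_{L_2} \leq \epsilon\|f\|_{H^m} + C_\epsilon(\text{sampled term})$ to move the stray $\|f\|_{H^m}$ back to the left side, at the cost of shrinking $\rho_0$.

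The main obstacle, and the step deserving the most care, is the local polynomial-reproduction estimate on each chart with the correct $\rho$-scaling: one must show that on a ball of radius comparable to $\rho$ the $H^m$-energy is recovered from $O(1)$ sample points per ball up to the $\rho^{2m}$-remainder, uniformly in the ball. This requires a scaling argument (dilating the ball of radius $\rho$ to unit size, applying a fixed-scale sampling inequality, then rescaling) and uniform control of the constants in the unisolvence of the sampled point configuration — which is exactly what properties (1) (separation) and (2) (density) of the $\rho$-lattice in Lemma \ref{covL} guarantee. Since the theorem is stated to follow from \cite{Pes00}, \cite{Pes04b}, I expect the cited proof to carry out precisely this scaling-and-absorption scheme, with the bounded-geometry hypothesis there replaced by compactness of $\mathbf{M}$ and uniform ellipticity of $\mathcal{L}$, which make all the local constants uniform across the finite atlas.
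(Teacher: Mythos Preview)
The paper does not actually contain a proof of this theorem. Immediately before stating it (together with Theorem~\ref{T1}) the authors write: ``The next two theorems were proved in \cite{Pes00}, \cite{Pes04b}, for a Laplace-Beltrami operator on a Riemannian manifold of bounded geometry, but their proofs go through for any $C^{\infty}$-bounded uniformly elliptic self-adjoint positive definite differential operator on ${\bf M}$.'' No argument is given in the present paper; the result is simply quoted from earlier work and then \emph{used} (together with the Bernstein inequality) to derive the Plancherel--Polya inequalities in the theorem and corollary that follow.

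Your sketch is therefore not being compared to anything in this paper, but it is a reasonable outline of the kind of argument that appears in the cited references: a local Poincar\'e/Taylor-remainder estimate on balls of radius $\sim\rho$, summed over the covering with bounded multiplicity, followed by an absorption step using elliptic regularity for $\mathcal{L}$. You correctly anticipated this in your final paragraph. One small caution: in your reassembly step you pass through $\rho^{4m}\|f\|_{H^{2m}}^2$ and then invoke $\|f\|_{H^{2m}}\lesssim \|\mathcal{L}^m f\|+\|f\|_{L_2}$, which is fine, but be careful that the absorption of the resulting $\|f\|_{L_2}$ term genuinely closes without circularity --- in the cited papers this is handled by a direct Poincar\'e-type inequality at scale $\rho$ rather than by abstract interpolation, and that is the cleanest way to get the stated dependence on $\rho$ and $m$.
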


Using  the constant $C_{2}({\bf M},\mathcal{L})$ from this Theorem, we
define another constant
\begin{equation}
c_{0}=c_{0}({\bf M},\mathcal{L})=\left(2C_{2}({\bf M},\mathcal{L})\right)^{-1/2m_{0}},\label{const}
\end{equation}
where $m_{0}=1+n/2,\>\>\>n=dim {\bf M}$.

 The previous  Theorem  and the Bernstein inequality imply the
following Plancherel-Polya-type inequalities.  Such
inequalities are also known as Marcinkewicz-Zygmund inequalities.

\begin{thm}
There exists a constant $\>\>\>c_{2}=c_{2}({\bf M},\>\>\mathcal{L})$
such that for any $\omega>0$, and for every metric $\rho$-lattice
$M_{\rho}=\{x_{k}\}$ with $\rho= c_{0}\omega^{-1/2}$, the following
inequalities hold:

\begin{equation}
\rho^{-n/2}\|f\|_{L_{2}({\bf M})} \leq c_{2}\left(\sum_{k}
|f(x_{k})|^{2}\right)^{1/2}\label{PP1}
\end{equation}
for all $f\in {\mathbf E}_{\omega}(\mathcal{L})$ and $c_{0}$ defined in
(\ref{const}). Moreover, for the same lattice  there exists a
constant $\>\> c_{1}=c_{1}({\bf M},\>\>\mathcal{L},\>\> \omega)\>\>$
such that
\begin{equation}
c_{1}\left(\sum_{k}|f(x_{k})|^{2}\right)^{1/2}\leq
\rho^{-n/2}\|f\|_{L_{2}({\bf M})}\label{PP2}
\end{equation}
\end{thm}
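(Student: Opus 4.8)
The plan is to derive both inequalities directly from the two preceding sampling theorems, restricting to $f \in {\mathbf E}_{\omega}(\mathcal{L})$ and exploiting the elementary spectral Bernstein bound. If $f = \sum_{\lambda_j \le \omega} c_j u_j$ lies in ${\mathbf E}_{\omega}(\mathcal{L})$, then for every $m \in \mathbb{N}$
\[
\|\mathcal{L}^{m} f\|_{2}^{2} = \sum_{\lambda_j\le\omega}\lambda_j^{2m}|c_j|^2 \le \omega^{2m}\sum_{\lambda_j\le\omega}|c_j|^2 = \omega^{2m}\|f\|_2^2 ,
\]
so that $\|\mathcal{L}^{m}f\|_2 \le \omega^{m}\|f\|_2$. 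I would fix throughout a single natural number $m=m_0$ exceeding $n/2$, so that both Theorem \ref{T1} and the inequality (\ref{rightPP}) apply.

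For the lower bound (\ref{PP1}), I would start from (\ref{rightPP}) with $m=m_0$. For $f\in{\mathbf E}_\omega(\mathcal{L})$ the bound above, together with $\|f\|_{L_2({\bf M})}\le\|f\|_{H^{m_0}({\bf M})}$, gives $\|\mathcal{L}^{m_0}f\|_2 \le \omega^{m_0}\|f\|_{H^{m_0}({\bf M})}$. Substituting into (\ref{rightPP}) yields
\[
\|f\|_{H^{m_0}({\bf M})} \le C_2\rho^{n/2}\left(\sum_k|f(x_k)|^2\right)^{1/2} + C_2\rho^{2m_0}\omega^{m_0}\|f\|_{H^{m_0}({\bf M})}.
\]
The crucial step is the calibration $\rho = c_0\omega^{-1/2}$ with $c_0 = (2C_2)^{-1/(2m_0)}$ from (\ref{const}): it forces $C_2\rho^{2m_0}\omega^{m_0} = C_2 c_0^{2m_0} = \tfrac12$, so the last term can be absorbed into the left side. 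This gives $\|f\|_{H^{m_0}({\bf M})} \le 2C_2\rho^{n/2}(\sum_k|f(x_k)|^2)^{1/2}$, and using again $\|f\|_{L_2({\bf M})}\le\|f\|_{H^{m_0}({\bf M})}$ produces (\ref{PP1}) with $c_2 = 2C_2({\bf M},\mathcal{L})$, independent of $\omega$ as claimed.

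For the upper bound (\ref{PP2}) I would instead use the inequality (\ref{MT}) of Theorem \ref{T1}, namely $(\sum_k|f(x_k)|^2)^{1/2}\le C_1\rho^{-n/2}\|f\|_{H^{m_0}({\bf M})}$, and then control $\|f\|_{H^{m_0}({\bf M})}$ by $\|f\|_{L_2({\bf M})}$. Using the equivalence of the $H^{m_0}$ norm with the mixed norm (\ref{mixednorm2}) together with the embedding (\ref{A}), ${\mathbf E}_\omega(\mathcal{L})\subset{\mathbf B}_{\sqrt\omega}(\mathbb{D})$, the Bernstein inequalities $\|D_{i_1}\cdots D_{i_{m_0}}f\|_2 \le \omega^{m_0/2}\|f\|_2$ give $\|f\|_{H^{m_0}({\bf M})} \le C(1+\omega^{m_0/2})\|f\|_{L_2({\bf M})}$. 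Hence $(\sum_k|f(x_k)|^2)^{1/2} \le C_1C(1+\omega^{m_0/2})\rho^{-n/2}\|f\|_{L_2({\bf M})}$, which is (\ref{PP2}) with $c_1 = [C_1C(1+\omega^{m_0/2})]^{-1}$, depending on $\omega$ exactly as the statement permits.

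The only genuinely delicate point is the absorption argument for (\ref{PP1}): everything hinges on choosing the single exponent $m_0$ and the radius $\rho=c_0\omega^{-1/2}$ so that the error coefficient $C_2\rho^{2m_0}\omega^{m_0}$ is exactly $\tfrac12$. Once this calibration is in place, both inequalities follow immediately from Theorem \ref{T1}, the inequality (\ref{rightPP}), and the Bernstein inequality, with no further use of the structure of the lattice or of ${\bf M}$. I would also record that $m_0$ must be a natural number exceeding $n/2$ for these theorems to apply; its precise value is immaterial, since the cancellation $C_2 c_0^{2m_0}=\tfrac12$ holds by the definition of $c_0$ in (\ref{const}) for any such $m_0$.
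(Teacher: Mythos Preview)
Your argument is correct and follows the paper's proof almost exactly: the same absorption trick for (\ref{PP1}) via the calibration $C_2\rho^{2m_0}\omega^{m_0}=\tfrac12$, and the same passage through Theorem~\ref{T1} for (\ref{PP2}). The only cosmetic differences are that the paper absorbs directly into $\|f\|_2$ (using $\|f\|_2\le\|f\|_{H^{m_0}}$ before the absorption rather than after), and for (\ref{PP2}) it bounds $\|f\|_{H^{m_0}}$ via elliptic regularity $\|f\|_{H^{m_0}}\le C(\|f\|_2+\|\mathcal{L}^{m_0/2}f\|_2)$ together with the spectral Bernstein inequality, rather than via the embedding $\mathbf{E}_\omega(\mathcal{L})\subset\mathbf{B}_{\sqrt\omega}(\mathbb{D})$; the paper's route has the minor advantage of not invoking the homogeneous structure, in keeping with the remark that this section holds for general $\mathcal{L}$.
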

\begin{proof}Indeed, if $f\in {\mathbf E}_{\omega}({\bf M})$, and $\rho$ in (\ref{rightPP}) is given
by $\rho=c_{0}\omega^{-1/2}$ where $c_{0}$ was defined in
(\ref{const}), then by the Bernstein inequality
$$
C_{2}\rho^{2m_{0}}\|\mathcal{L}^{m_{0}} f\|\leq
C_{2}\left(\rho^{2}\omega^{-1}\right)^{m_{0}}\|f\|=\frac{1}{2}\|f\|.
$$
The inequality (\ref{rightPP}) now implies that
\begin{equation}
\rho^{-n/2}\|f\|_{2}\leq c_{2}\left(\sum_{x_{k}\in M_{\rho}}
|f(x_{k})|^{2}\right)^{1/2},
\end{equation}
with $c_{2}=2C_{2}({\bf M},\mathcal{L})$ and $C_{2}({\bf M},\mathcal{L})$ is
the same as in (\ref{rightPP}). To prove (\ref{PP2}) we apply
elliptic regularity of $\mathcal{L}$ to obtain
\begin{equation}
\|f\|_{H^{m}({\bf M})}\leq
C({\bf M},\mathcal{L})\left(\|f\|+\|\mathcal{L}^{m/2}f\|\right)
\end{equation}
and then the Bernstein inequality gives
\begin{equation}
\|f\|_{H^{m}({\bf M})}\leq C({\bf M},\mathcal{L})(1+\omega^{m/2})\|f\|.
\end{equation}
By choosing $m_{0}=1+n/2$ for $\>\>m\>\>$ and using Theorem
\ref{T1}, we obtain (\ref{PP2}) with
$$
c_{1}=\left\{C_{1}({\bf M},\mathcal{L})C({\bf M},\mathcal{L})\left(1+\omega^{m_{0}}/2\right)\right\}^{-1}.
$$
\end{proof}

\begin{col}

There exist constants $\>\>\>c_{1}=c_{1}({\bf M},\mathcal{L})>0,\>\>$
$c_{2}=c_{2}({\bf M},\mathcal{L})>0,$
 and
$\>\>c_{0}=c_{0}({\bf M},\mathcal{L})>0,$ such that for any $\omega>0$, and for
every metric $\rho$-lattice $M_{\rho}=\{x_{k}\}$ with $\rho=
c_{0}\omega^{-1/2}$, the following Plancherel-Polya inequalities
hold:

\begin{equation}
c_{1}\left(\sum_{k}|f(x_{k})|^{2}\right)^{1/2}\leq\rho^{-n/2}\|f\|_{L_{2}({\bf M})}
\leq c_{2}\left(\sum_{k} |f(x_{k})|^{2}\right)^{1/2}, \label{completePlPo100}
\end{equation}
for all $f\in {\mathbf E}_{\omega}(\mathcal{L})$ and $n=\dim \  {\bf M}$.
\label{completePlPo200}
\end{col}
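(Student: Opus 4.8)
The plan is to assemble the corollary directly from the theorem just proved, whose conclusions are precisely the inequalities (\ref{PP1}) and (\ref{PP2}), and to remove the apparent $\omega$-dependence from the lower constant $c_{1}$. Observe first that the upper bound (\ref{completePlPo100}), namely $\rho^{-n/2}\|f\|_{L_{2}({\bf M})}\leq c_{2}(\sum_{k}|f(x_{k})|^{2})^{1/2}$, is exactly (\ref{PP1}) with the same constant $c_{2}=2C_{2}({\bf M},\mathcal{L})$, which depends only on ${\bf M}$ and $\mathcal{L}$; so that half is immediate and nothing further is needed there.

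The lower bound $c_{1}(\sum_{k}|f(x_{k})|^{2})^{1/2}\leq \rho^{-n/2}\|f\|_{L_{2}({\bf M})}$ is (\ref{PP2}), but as stated in the theorem the constant there is
$$
c_{1}=\left\{C_{1}({\bf M},\mathcal{L})C({\bf M},\mathcal{L})\left(1+\omega^{m_{0}}/2\right)\right\}^{-1},
$$
which still involves $\omega$. The key step is therefore to eliminate this dependence. I would do this by exploiting the scaling built into the lattice: since we always take $\rho=c_{0}\omega^{-1/2}$, we have $\omega=c_{0}^{2}\rho^{-2}$, and as $\omega$ ranges over $(0,\infty)$ the admissible radii $\rho$ range over a fixed bounded interval $(0,\rho_{0})$ (with $\rho_{0}=\rho_{0}({\bf M},\mathcal{L})$ from Theorem \ref{T1}). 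Hence $\omega^{m_{0}}=c_{0}^{2m_{0}}\rho^{-2m_{0}}$ is bounded above in terms of $\rho_{0}$, $c_{0}$, and $m_{0}$ alone, all of which depend only on ${\bf M}$ and $\mathcal{L}$. Consequently the factor $(1+\omega^{m_{0}}/2)$ is bounded by a constant depending only on ${\bf M}$ and $\mathcal{L}$, and replacing $c_{1}$ by the resulting (smaller) uniform constant preserves the inequality (\ref{PP2}) for every $\omega>0$ simultaneously. This yields a genuine $c_{1}=c_{1}({\bf M},\mathcal{L})>0$.

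The main obstacle, such as it is, is precisely this uniformization: one must check that the implicit constraint $0<\rho<\rho_{0}$ forces $\omega$ to stay bounded below, so that $\omega^{m_{0}}$ cannot blow up and spoil the bound — and indeed $\rho<\rho_{0}$ together with $\rho=c_{0}\omega^{-1/2}$ gives $\omega>c_{0}^{2}\rho_{0}^{-2}$, a positive lower bound, while there is no upper constraint on $\omega$, but large $\omega$ only makes $(1+\omega^{m_{0}}/2)^{-1}$ smaller, so taking an infimum (equivalently, a single worst-case constant) over the relevant range is harmless for the lower bound. With $c_{0}$, $c_{1}$, $c_{2}$ now all depending only on ${\bf M}$ and $\mathcal{L}$, the two inequalities (\ref{PP1}) and (\ref{PP2}) combine into the single chain (\ref{completePlPo100}), completing the proof.
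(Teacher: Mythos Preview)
Your treatment of the $c_2$ inequality is correct: that part is exactly (\ref{PP1}) and needs no further work.

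The uniformization of $c_1$, however, contains a real gap. You correctly note that $\rho<\rho_0$ forces $\omega>c_0^{2}\rho_0^{-2}$; but this is a \emph{lower} bound on $\omega$, and it does nothing to stop $\omega^{m_0}$ (and hence $1+\omega^{m_0}/2$) from being arbitrarily large. Your earlier assertion that $\omega^{m_0}=c_0^{2m_0}\rho^{-2m_0}$ is ``bounded above in terms of $\rho_0$, $c_0$, $m_0$'' is simply false: $\rho$ ranges over $(0,\rho_0)$ and can be arbitrarily small, so $\rho^{-2m_0}$ is unbounded. You then argue that since $(1+\omega^{m_0}/2)^{-1}$ only decreases as $\omega\to\infty$, taking an infimum is ``harmless''. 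It is true that the inequality $c_1(\omega)\bigl(\sum_k|f(x_k)|^2\bigr)^{1/2}\leq\rho^{-n/2}\|f\|_2$ is preserved if $c_1(\omega)$ is replaced by anything smaller; but the infimum of $c_1(\omega)=\{C_1C(1+\omega^{m_0}/2)\}^{-1}$ over the relevant range of $\omega$ is $0$, so what your argument actually produces is the vacuous inequality $0\leq\rho^{-n/2}\|f\|_2$. No positive uniform constant emerges from this reasoning.

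The paper gives no proof of the corollary; it is presented as a direct restatement of the preceding theorem. Tellingly, when the corollary is first invoked at the beginning of Section~5, the paper itself writes $c_1=c_1({\bf M},\mathcal{L},\omega)$, restoring the $\omega$-dependence; and the only substantive later use of (\ref{completePlPo100}) --- in deriving (\ref{clos-band}) --- requires only the $c_2$ bound. So the most plausible reading is that the claim $c_1=c_1({\bf M},\mathcal{L})$ in the corollary is a minor overstatement which the paper neither proves nor needs. The obstacle you identify is genuine, but it is not resolved by the argument you propose.
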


The following Theorem shows that our lattices (appearing in the
previous Theorems) always produce sampling sets with essentially the
optimal number of sampling points (see also
\cite{Pes04a},\cite{Pes09}).

\begin{thm}
 If the  constant $c_{0}({\bf M},\mathcal{L})>0$ is the same
as above, then  for any $\omega>0$ and $\rho=c_{0}\omega^{-1/2}$,
there exist $C_{1}({\bf M},\mathcal{L}), C_{2}({\bf M},\mathcal{L})$ such
that the number of points in  any $\rho$-lattice $M_{\rho}$
satisfies the following inequalities
\begin{equation}
C_{1}\omega^{n/2}\leq |M_{\rho}|\leq
C_{2}\omega^{n/2};\label{rate}
\end{equation}

\end{thm}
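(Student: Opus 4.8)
The plan is to reduce the statement to the volume-counting properties of a $\rho$-lattice recorded in Lemma \ref{covL}, together with the standard fact that, on a compact Riemannian manifold, the volume of a small metric ball is comparable to $r^n$. Since $\rho = c_0\omega^{-1/2}$ we have $\rho^{-n} = c_0^{-n}\omega^{n/2}$, so it suffices to prove $C_1'\rho^{-n} \leq |M_\rho| \leq C_2'\rho^{-n}$ for constants depending only on ${\bf M}$, after which the factor $c_0^{\pm n}$ is absorbed into $C_1, C_2$. The one analytic input I would isolate first is the following: there are constants $0 < c \leq C$ and $r_\ast > 0$, depending only on ${\bf M}$, such that $c\, r^n \leq \operatorname{vol}(B(x,r)) \leq C\, r^n$ for every $x \in {\bf M}$ and every $0 < r \leq r_\ast$. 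This holds because ${\bf M}$ is compact, hence of bounded geometry: the injectivity radius is bounded below and the curvature is bounded, so the exponential chart at each point distorts volumes by a factor uniformly close to $1$ on balls of radius $\leq r_\ast$, which reduces the estimate to the Euclidean case.

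Granting this, the upper bound comes from disjointness. By property (1) of Lemma \ref{covL} the balls $B(y_\nu, \rho/4)$ are pairwise disjoint, so
\[
\sum_{y_\nu \in M_\rho} \operatorname{vol}(B(y_\nu, \rho/4)) \leq \operatorname{vol}({\bf M}).
\]
Each summand is at least $c\,(\rho/4)^n$, whence $|M_\rho| \leq C_2'\rho^{-n}$ with $C_2' = 4^n\operatorname{vol}({\bf M})/c$. The lower bound comes from covering. By property (2) the balls $B(y_\nu, \rho/2)$ cover ${\bf M}$, so every point lies in at least one of them and hence
\[
\operatorname{vol}({\bf M}) \leq \sum_{y_\nu \in M_\rho} \operatorname{vol}(B(y_\nu, \rho/2)) \leq |M_\rho|\, C\,(\rho/2)^n,
\]
giving $|M_\rho| \geq C_1'\rho^{-n}$ with $C_1' = 2^n\operatorname{vol}({\bf M})/C$. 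Substituting $\rho^{-n} = c_0^{-n}\omega^{n/2}$ yields (\ref{rate}). Note that the multiplicity bound, property (3), is not needed here; disjointness and covering alone suffice, though (3) would give an alternative route to the lower bound.

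I expect the only real obstacle to be the uniform volume comparison for small balls, that is, making precise the implicit requirement that $\rho$ be small enough for the comparison constants to apply. Since $\rho = c_0\omega^{-1/2}$ is small exactly when $\omega$ is large, and since a $\rho$-lattice is only defined for sufficiently small $\rho$ by Lemma \ref{covL}, the estimate is really a statement about $\omega \to \infty$. For $\omega$ in any fixed bounded range, $|M_\rho|$ and $\omega^{n/2}$ are each bounded above and below by positive constants, so $C_1$ and $C_2$ can be enlarged to cover that range as well, and (\ref{rate}) holds for all $\omega > 0$.
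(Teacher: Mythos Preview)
Your proof is correct and follows essentially the same route as the paper: both use the disjointness of the balls $B(y_\nu,\rho/4)$ for the upper bound on $|M_\rho|$, the covering by the balls $B(y_\nu,\rho/2)$ for the lower bound, the uniform comparison $\operatorname{vol}(B(x,r))\asymp r^n$ for small $r$, and then substitute $\rho=c_0\omega^{-1/2}$. Your discussion of why the volume comparison holds and how to handle bounded ranges of $\omega$ is a bit more explicit than the paper's, but the argument is the same.
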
\label{FT}
\begin{proof}
 According to the
definition of a lattice $M_{\rho}$ we have
$$
|M_{\rho}|\inf_{x\in {\bf M}}Vol(B(x,\rho/4))\leq Vol({\bf M})\leq
|M_{\rho}|\sup_{x\in {\bf M}}Vol(B(x,\rho/2))
$$
or
$$
\frac{Vol({\bf M})}{\sup_{x\in {\bf M}}Vol(B(x,\rho/2))}\leq |M_{\rho}|\leq
\frac{Vol({\bf M})}{\inf_{x\in {\bf M}}Vol\left(B(x,\rho/4)\right)}.
$$
Since for  certain $c_{1}({\bf M}), c_{2}({\bf M})$, all $x\in {\bf M}$ and all
sufficiently small $\rho>0$, one has a double inequality
$$
c_{1}({\bf M})\rho^{n}\leq  Vol(B(x,\rho))\leq c_{2}({\bf M})\rho^{n},
$$
and since $\rho=c_{0}\omega^{-1/2}, $ we obtain that for certain
$C_{1}({\bf M},\mathcal{L}), C_{2}({\bf M},\mathcal{L})$ and all $\omega>0$
\begin{equation}
C_{1}\omega^{n/2}\leq |M_{\rho}|\leq C_{2}\omega^{n/2}.
\end{equation}
\end{proof}

Since the inequalities (\ref{rate}) are in an agreement with 
Weyl's formula (\ref{W}),  the Theorem shows that
if $\omega>0$ is large enough, every uniqueness set $M_{\rho}$ for
${\mathbf E}_{\omega}(\mathcal{L})$ contains essentially the "correct" number of
points.

\section{Cubature formulas}

Again we work on 
a compact homogeneous Riemannian manifold ${\bf M}$, and use the
operator ${\mathcal L}$ of 
(\ref{Laplacian}).  However, 
the results of this section hold for general 
${\bf M}$ and ${\mathcal L}$ (if at least
${\bf M}$ is smooth and compact, and
${\mathcal L}$ is a positive elliptic self-adjoint second-order differential operator on ${\bf M}$).

Corollary \ref{completePlPo200}
 shows that  if $\vartheta_{k}$ is
the orthogonal projection of the Dirac measure $\delta_{x_{k}}$ on
the space ${\mathbf E}_{\omega}(\mathcal{L})$ (in a Hilbert space
$H^{-n/2-\varepsilon}({\bf M}), \>\>\varepsilon >0$, which can be defined as the domain of the operator $\mathcal{L}^{-n/4-\varepsilon/2}$) then there exist
constants $c_{1}=c_{1}({\bf M},\mathcal{L}, \omega)>0,\>\>
c_{2}=c_{2}({\bf M}mat,\mathcal{L})>0,$ such that the following frame
inequality holds
\begin{equation}
c_{1}\left(\sum_{k}\left|\left<f,\vartheta_{k}\right>\right|^{2}\right)^{1/2}
\leq \rho^{-n/2}\|f\|_{L_{2}({\bf M})} \leq
c_{2}\left(\sum_{k}\left|\left<f,\vartheta_{k}\right>\right|^{2}\right)^{1/2}.
\end{equation}
for all $f\in {\mathbf E}_{\omega}(\mathcal{L})$.

Let $M_{\rho}=\{x_{k}\}, \ k=1,...,N(M_{\rho}),$ be a $\rho$-lattice on ${\bf M}$ (see Lemma \ref{covL}). 
We construct the Voronoi partition of   ${\bf M}$ associated to the set 
  $M_{\rho}=\{x_{k}\}, \ k=1,...,N(M_{\rho})$.   Elements of this partition will be denoted as 
$ \mathcal{M}_{k,\rho}$.  Let us recall that the distance from each point in $ \mathcal{M}_{j,\rho}$
to $x_j$ is less than or equal to its distance to any 
other point of the family  $M_{\rho}=\{x_{k}\}, \ k=1,...,N(M_{\rho})$.
Some properties of this cover of   ${\bf M}$ are summarized in the following Lemma. 
which follows easily from  the definitions.

\begin{lem}
\label{mkrho}
The sets $\mathcal{M}_{k,
\rho}, \  k=1,...,N(M_{\rho}),$ have the following properties:

1) they are measurable;

2) they are disjoint;

3) they form a cover of ${\bf M}$;

4) there exist positive $a_{1},\  a_{2}$, independent of $\rho$ and the lattice $M_{\rho}=\{x_{k}\}$, such that 

\begin{equation}
\label{mkrhoway}
a_{1}\rho^{n}\leq \mu\left(\mathcal{M}_{k,\rho}\right)\leq a_{2}\rho^{n}.
\end{equation}

\end{lem}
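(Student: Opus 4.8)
The plan is to verify each of the four properties of the Voronoi cells $\mathcal{M}_{k,\rho}$ directly from their definition, leveraging the lattice structure provided by Lemma \ref{covL}. Recall that $\mathcal{M}_{k,\rho}$ consists of those points of $\bf{M}$ whose distance to $x_k$ is no greater than their distance to any other lattice point (with ties broken arbitrarily but measurably). Properties (1), (2), and (3) are essentially formal. Measurability (1) follows because each cell is a finite intersection of sets of the form $\{x: d(x,x_k) \leq d(x,x_j)\}$, and the distance function is continuous, so these are closed (hence measurable) sets; a standard tie-breaking rule (e.g. assigning each point to the cell of least index among its nearest lattice points) keeps the cells measurable. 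Disjointness (2) and the covering property (3) hold by construction of any Voronoi-type partition: every point of $\bf{M}$ lies in exactly one cell once ties are resolved.

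The substantive step is the volume estimate (4), and this is where I would concentrate the effort. The plan is to sandwich each cell between two metric balls centered at $x_k$. For the upper bound, I would observe that since the balls $B(y_\nu, \rho/2)$ cover $\bf{M}$ (property (2) of Lemma \ref{covL}), every point assigned to $x_k$ must lie within distance $\rho$ of $x_k$: if a point $x$ were at distance exceeding $\rho$ from $x_k$, it would still be covered by some $B(x_j, \rho/2)$, forcing $d(x,x_j) < \rho/2 < \rho < d(x,x_k)$, so $x$ would not be assigned to $x_k$. Hence $\mathcal{M}_{k,\rho} \subseteq B(x_k, \rho)$. For the lower bound, I would show $B(x_k, \rho/4) \subseteq \mathcal{M}_{k,\rho}$: if $d(x,x_k) < \rho/4$, then for any other lattice point $x_j$, the disjointness of the balls $B(x_\nu, \rho/4)$ (property (1) of Lemma \ref{covL}) gives $d(x_k, x_j) \geq \rho/2$, whence by the triangle inequality $d(x,x_j) \geq \rho/2 - \rho/4 = \rho/4 > d(x,x_k)$, so $x$ is indeed assigned to $x_k$.

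Combining these inclusions yields
\begin{equation}
\mathrm{Vol}\left(B(x_k, \rho/4)\right) \leq \mu\left(\mathcal{M}_{k,\rho}\right) \leq \mathrm{Vol}\left(B(x_k, \rho)\right),
\end{equation}
and then I would invoke the standard volume comparison for a smooth compact manifold, namely that there exist constants $c_1(\bf{M}), c_2(\bf{M})$ with $c_1(\bf{M})\rho^n \leq \mathrm{Vol}(B(x,\rho)) \leq c_2(\bf{M})\rho^n$ uniformly in $x$ for all sufficiently small $\rho$ — exactly the estimate already used in the proof of Theorem \ref{FT}. Applying this to both ends of the sandwich and absorbing the fixed factor $(1/4)^n$ into the constant produces $a_1 \rho^n \leq \mu(\mathcal{M}_{k,\rho}) \leq a_2 \rho^n$ with $a_1, a_2$ depending only on $\bf{M}$ and independent of $\rho$ and of the particular lattice.

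The only genuine subtlety — and the one point I would flag as the main obstacle — is ensuring the inclusions and the volume comparison hold with constants uniform in both the cell index $k$ and the choice of lattice. This is handled by the fact that $\bf{M}$ is compact with bounded geometry, so the volume comparison constants $c_1(\bf{M}), c_2(\bf{M})$ are genuinely uniform over all centers once $\rho$ is below the threshold $\rho_0$ supplied by Lemma \ref{covL}; there is no dependence on which lattice is chosen, since the inclusions $B(x_k,\rho/4) \subseteq \mathcal{M}_{k,\rho} \subseteq B(x_k,\rho)$ depend only on the defining separation and covering radii $\rho/4$ and $\rho/2$ guaranteed by the lemma. Everything else is routine, so the proof is short once the two inclusions are established.
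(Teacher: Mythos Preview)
Your argument is correct and is precisely the natural verification the paper has in mind: the paper itself offers no proof beyond the remark that the lemma ``follows easily from the definitions,'' and your sandwich $B(x_k,\rho/4)\subseteq\mathcal{M}_{k,\rho}\subseteq B(x_k,\rho)$ together with the uniform volume comparison (already invoked in the proof of Theorem~\ref{FT}) is exactly how one unpacks that remark. In fact your upper inclusion can be sharpened to $\mathcal{M}_{k,\rho}\subseteq B(x_k,\rho/2)$ by the same covering argument, but this makes no difference to the conclusion.
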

Our next goal is to prove the following fact.
\begin{thm} Say $\rho > 0$, and let $\left\{\mathcal{M}_{k,\rho}\right\}$ be the disjoint cover
 of ${\bf M}$ which is associated with a $\rho$-lattice $M_{\rho}$.  If $\rho$ is 
sufficiently small  then for any sufficiently large $K\in \mathbb{N}$ there exists a $C(K)>0$ such that for all  smooth functions $f$ the following inequality holds:
\begin{equation}
\left|\sum_{\nu}\sum_{x_{k}\in M_{\rho}}\psi_{\nu}f(x_{k})\  \mu \mathcal{M}_{k,\rho}-\int_{{\bf M}}f(x)dx\right|\leq
 C(K)\sum_{|\beta|=1}^{K}\rho^{n/2+|\beta|}\|(I+\mathcal{L})^{|\beta|/2}f\|_{2},\label{closeness}
\end{equation}
where $C(K)$ is independent of $\rho$ and the $\rho$-lattice $M_{\rho}$.
\end{thm}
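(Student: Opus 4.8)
The plan is first to dispose of the partition of unity and reduce the left-hand side to a genuine quadrature error. Since $\{\psi_{\nu}\}$ is a partition of unity, $\sum_{\nu}\psi_{\nu}(x_{k})f(x_{k})=f(x_{k})$, so the double sum collapses and the quantity to be estimated is $\left|\sum_{k}f(x_{k})\,\mu\mathcal{M}_{k,\rho}-\int_{{\bf M}}f\,dx\right|$. Because the Voronoi cells $\mathcal{M}_{k,\rho}$ are disjoint and cover ${\bf M}$ (Lemma \ref{mkrho}), I can write $\int_{{\bf M}}f=\sum_{k}\int_{\mathcal{M}_{k,\rho}}f\,dx$, so the error becomes $\sum_{k}\int_{\mathcal{M}_{k,\rho}}\bigl(f(x_{k})-f(x)\bigr)\,dx$. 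The gain of this rewriting is that every summand annihilates constants, so the whole estimate is driven by the oscillation of $f$ on each cell, and each cell has diameter comparable to $\rho$.

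Next I would prove a local estimate on a single cell. Each $\mathcal{M}_{k,\rho}$ is star-shaped about its generator $x_{k}$, is contained in a ball of radius $\sim\rho$, and has volume $a_{1}\rho^{n}\le\mu\mathcal{M}_{k,\rho}\le a_{2}\rho^{n}$ by (\ref{mkrhoway}). Working in a fixed coordinate chart and expanding $f(x)-f(x_{k})$ by Taylor's formula with integral remainder to order $K$, the difference is a sum of terms involving the derivatives of $f$ up to order $K$ integrated along the segment from $x_{k}$ to $x$. Since the vector fields $\mathbb{D}=\{D_{1},\dots,D_{d}\}$ span the tangent space at every point, each coordinate derivative can be rewritten, with smooth bounded coefficients, in terms of the mixed operators $D_{i_{1}}\cdots D_{i_{j}}$; elliptic regularity of $\mathcal{L}$ together with the norm equivalence (\ref{mixednorm2}) then lets me replace $\|D_{i_{1}}\cdots D_{i_{j}}f\|_{2}$ by $\|(I+\mathcal{L})^{j/2}f\|_{2}$. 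After integrating the remainder over the cell and using $\mathrm{diam}(\mathcal{M}_{k,\rho})\lesssim\rho$, the order-$|\beta|$ contribution on the $k$-th cell is controlled by $\rho^{|\beta|}$ times the $L_{2}$-mass of $(I+\mathcal{L})^{|\beta|/2}f$ over $\mathcal{M}_{k,\rho}$.

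Finally I would assemble the global bound, and this is where the characteristic factor $\rho^{n/2}$ must be produced. Summing the per-cell estimates and applying Cauchy--Schwarz over the cells, I pair the cell volumes $\mu\mathcal{M}_{k,\rho}\sim\rho^{n}$ against the cardinality $|M_{\rho}|\sim\rho^{-n}$ of the lattice, and invoke the Plancherel--Polya inequality (Theorem \ref{T1} and Corollary \ref{completePlPo200}) to pass from the discrete $\ell^{2}$-sum of sampled derivative values to the continuous norm $\|(I+\mathcal{L})^{|\beta|/2}f\|_{L_{2}({\bf M})}$; the disjointness of the cells makes the local $L_{2}$-masses add up exactly to the global norm, producing the term $\rho^{n/2+|\beta|}\|(I+\mathcal{L})^{|\beta|/2}f\|_{2}$ for each $1\le|\beta|\le K$, while the top-order remainder ($|\beta|=K$) is absorbed by estimating the best band-limited approximation $\mathcal{E}(f,\omega)$ with $\omega\sim\rho^{-2}$ via the Bernstein inequality (\ref{Bern}). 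I expect the delicate point --- and the main obstacle --- to be precisely this discrete-to-continuous passage: a naive argument through the local Sobolev embedding $H^{m}\hookrightarrow C^{0}$ loses a factor $\rho^{-n/2}$ and only yields the weaker power $\rho^{|\beta|}$, so it is essential to route the summation through the Plancherel--Polya / Marcinkiewicz--Zygmund inequality, which encodes the quasi-orthogonality of samples taken on a well-separated $\rho$-lattice and is what allows the full power $\rho^{n/2+|\beta|}$ to survive.
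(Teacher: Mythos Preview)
Your reduction of the left-hand side to the pure quadrature error $\sum_k\int_{\mathcal{M}_{k,\rho}}(f(x_k)-f(x))\,dx$ is correct, and your identification of the difficulty --- that a naive local Sobolev embedding costs a factor $\rho^{-n/2}$ --- is exactly the right diagnosis. But your proposed remedy does not work. The Plancherel--Polya inequality of Corollary \ref{completePlPo200} and the Bernstein inequality are valid only for functions in ${\bf E}_\omega(\mathcal{L})$, whereas the present theorem must hold for \emph{all} smooth $f$. The one-sided estimate of Theorem \ref{T1} does hold for general $H^m$ functions, but it bounds the discrete $\ell^2$ sum by $\rho^{-n/2}$ times a Sobolev norm, and if you trace through your Cauchy--Schwarz step you will see that this merely reproduces the $\rho^{|\beta|}$ bound you were trying to avoid, not $\rho^{n/2+|\beta|}$. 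Likewise, invoking $\mathcal{E}(f,\omega)$ and Bernstein for the remainder presupposes $f$ is band-limited, which it is not.

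The paper's proof takes a different route and does not use Plancherel--Polya at all. It keeps the partition of unity $\{\psi_\nu\}$ precisely so that the Taylor expansion can be carried out in a fixed coordinate chart (your collapse $\sum_\nu\psi_\nu(x_k)=1$ is algebraically correct but discards this localization). The key device is the scaled Sobolev inequality $|g(x_k)|\leq C\sum_{0\leq j\leq m}\rho^{j-n/p}\|g\|_{W^j_p(B(x_k,\rho))}$ (Adams), applied with $p=1$ to the Taylor coefficients $\partial^\alpha(\psi_\nu f)(x_k)$, followed by H\"older $\|\cdot\|_{L_1(B_\rho)}\leq C\rho^{n/2}\|\cdot\|_{L_2(B_\rho)}$. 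The Taylor remainder is handled separately by a direct integration in spherical coordinates, yielding an $L_2(B(x_k,\rho))$ bound with the same scaling. Both pieces produce, after integrating over $\mathcal{M}_{k,\rho}$, the per-cell estimate $C\sum_{1\leq|\beta|\leq 2m}\rho^{|\beta|+n/2}\|\partial^\beta(\psi_\nu f)\|_{L_2(B(x_k,\rho))}$, with the $\rho^{n/2}$ already present locally; the global bound then follows by summing over $k$ and $\nu$, using the bounded multiplicity of the cover and elliptic regularity to pass to $\|(I+\mathcal{L})^{|\beta|/2}f\|_2$. No sampling inequality for band-limited functions enters.
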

\begin{proof}
 We start with the Taylor series
\begin{equation}
\psi_{\nu}f(y)-\psi_{\nu}f(x_{k})=\sum_{1\leq |\alpha| \leq m-1} \frac{1}{\alpha
!}\partial^{\alpha}(\psi_{\nu}f)(x_{k})(x_{k}-y) ^{\alpha}+\label{Taylor}
\end{equation}
$$
\sum_{|\alpha|=m}\frac{1}{\alpha !}\int_{0}^{\tau}t^{m-1}\partial
^{\alpha}\psi_{\nu}f(x_{k}+t\theta)\theta^{\alpha}dt,
$$
where $ f\in
C^{\infty}(\mathbb{R}^{d}), \   y\in B(x_{k},\rho/2),\  x=(x^{(1)},...,x^{(d)}),\  y=(y^{(1)},...,y^{(d)}),\  \alpha=(
\alpha_{1},...,\alpha_{d}),\\ 
(x-y)^{\alpha}=(x^{(1)}-y^{(1)})^{\alpha_{1}}...
(x^{(d)}-y^{(d)})^{\alpha_{d}},\  \tau=\|x-x_{i}\|,\
\theta=(x-x_{i})/ \tau.
$

We are going to use the following inequality, which easily fellows from Lemma 6.19 in \cite{Ad},
and which is essentially the Sobolev imbedding theorem:
\begin{equation}
|(\psi_{\nu}f)(x_{k})|\leq C_{n,m}\sum_{0\leq j \leq m}
\rho^{j-n/p}\|(\psi_{\nu}f)\| _{W^{j}_{p}(B(x_{k},\rho))},\  1\leq p\leq \infty,\>\>
\label{basicineq}
\end{equation}
where $ m>n/p,$ and the functions $\{\psi_{\nu}\}$  form the partition
of unity which we used to define the Sobolev norm in
(\ref{Sobnorm}).
Using (\ref{basicineq}) for $p=1$ we obtain that the following inequality

\begin{equation}\left |\sum_{1\leq|\alpha|\leq m-1} \frac{1}{\alpha
!}\partial^{\alpha}(\psi_{\nu}f)(x_{k})(x_{k}-y) ^{\alpha}\right |\leq 
\end{equation}
$$
C(n,m)\rho^{|\alpha|}\sum_{1\leq|\alpha|\leq m} \sum_{0\leq |\gamma|\leq  m}\rho^{|\gamma|-n}\|\partial
^{\alpha+\gamma}(\psi_{\nu}f)\|_{L_{1}(B(x_{k},\rho))},\ \ m>n,\label{interm}
$$
 for some $C(n,m)\geq 0$. Since, by the Schwarz inequality,
 \begin{equation}
 \|\partial
^{\alpha}(\psi_{\nu}f)\|_{L_{1}(B(x_{k},\rho))}\leq C(n)\rho^{n/2}
 \|\partial
^{\alpha}(\psi_{\nu}f)\|_{L_{2}(B(x_{k},\rho))}
 \end{equation}
 we obtain the following estimate, which holds for small  $\rho$:
 \begin{equation}
\sup_{y\in B(x_{k},\rho)}\left | \sum_{1\leq|\alpha|\leq m-1} \frac{1}{\alpha
!}\partial^{\alpha}(\psi_{\nu}f)(x_{k})(x_{k}-y) ^{\alpha}\right |\leq 
\end{equation}
$$
C(n,m)\sum_{1\leq|\beta|\leq 2m}\rho^{|\beta|-n/2} \|\partial
^{\beta}(\psi_{\nu}f)\|_{L_{2}(B(x_{k},\rho))},\  m>n.
$$
Next, using the Schwarz inequality and the
assumption that
 $m>n=dim\ {\bf M},\  |\alpha|=m,$ we obtain
$$
\left |\int_{0}^{\tau}t^{m-1}\partial
^{\alpha}\psi_{\nu}f(x_{k}+t\theta)\theta^{\alpha}dt\right |\leq
$$
$$\int_{0}^{\tau}t^{m-n/2-1/2}|t^{n/2-1/2}\partial^{\alpha}
\psi_{\nu}f(x_{k}+t\theta)|dt\leq
$$
$$C\left(\int_{0}^{\tau}t^{2m-n-1}\right)^{1/2}\left(\int_{0}
^{\tau} t^{n-1}|\partial^{\alpha}\psi_{\nu}f(x_{k}+t\theta)| ^{2}dt\right)
^{1/2}\leq
$$
$$C\tau^{m-n/2}\left(\int_{0}^{\tau}t^{n-1}|\partial^{\alpha}
\psi_{\nu}f(x_{k}+t\theta)|^{2}dt\right)^{1/2}, \ m>n.
$$
We square this inequality, and integrate both sides of it over the ball
$B(x_{k},\rho/2)$, using the spherical coordinate system
$(\tau, \theta).$  We find

$$
\int_{B(x_{k},\rho)}\left |\int_{0}^{\tau}t^{m-1}\partial
^{\alpha}\psi_{\nu}f(x_{k}+t\theta)\theta^{\alpha}dt\right |^{2}\tau^{n-1}d\theta
d\tau\leq
$$
$$
C(m,n)\int_{0}^{\rho/2}\tau^{2m-n}\int_{0}^{2\pi}
\left |\int_{0}^{\tau}t^{n-1}\partial
^{\alpha}(\psi_{\nu}f)(x_{k}+t\theta)\theta^{\alpha}dt\right |^{2}\tau^{n-1}d\theta
d\tau\leq
$$
$$C(m,n)\int_{0}^{\rho/2}t^{n-1}\left(\int_{0}^{2\pi}\int_{0}^{\rho/2}
\tau^{2m-n}\left |\partial^{\alpha}(\psi_{\nu}f)(x_{k}+t\theta)\right |^{2}
\tau^{n-1}d\tau
d\theta\right)dt\leq
$$
$$
C_{m,n}\rho^{2|\alpha|}\|\partial^{\alpha}
(\psi_{\nu}f)\|^{2}_{L_{2}(B(x_{k},\rho))},
$$
where $\tau=\|x-x_{k}\|\leq\rho/2, \  m=|\alpha|>n.$ 
Let $\left\{\mathcal{M}_{k,\rho}\right\}$ be the Voronoi  cover of ${\bf M}$ which is associated 
with a $\rho$-lattice $M_{\rho}$ (see Lemma \ref{mkrho}). 
From here we obtain
\begin{equation}
\int _{\mathcal{M}_{k}}\left | \psi_{\nu}f(y)-\psi_{\nu}f(x_{k})\right |dx\leq  
\end{equation}
$$
C(n,m)\sum_{1\leq|\beta|\leq 2m}\rho^{|\beta|+n/2} \|\partial
^{\beta}(\psi_{\nu}f)\|_{L_{2}(B(x_{k},\rho))}
+
$$
$$
\sum_{|\alpha|=m}\frac{1}{\alpha !}\int _{B(x_{k}, \rho)}  \left  |\int_{0}^{\tau}t^{m-1}\partial
^{\alpha}\psi_{\nu}f(x_{k}+t\theta)\theta^{\alpha}dt \right |\leq 
$$
$$
C(n,m)\sum_{1\leq|\beta|\leq 2m}\rho^{|\beta|+n/2} \|\partial
^{\beta}(\psi_{\nu}f)\|_{L_{2}(B(x_{k},\rho))}+
$$
$$
\rho^{n/2}
\sum_{|\alpha|=m}\frac{1}{\alpha !}\left (\int _{B(x_{k}, \rho)}  \left  |\int_{0}^{\tau}t^{m-1}\partial
^{\alpha}\psi_{\nu}f(x_{k}+t\theta)\theta^{\alpha}dt \right |^{2}\tau^{n-1}d\tau d\theta\right)^{1/2}\leq 
$$
$$
C(n,m)\sum_{1\leq|\beta|\leq 2m}\rho^{|\beta|+n/2} \|\partial
^{\beta}(\psi_{\nu}f)\|_{L_{2}(B(x_{k},\rho))}.
$$

Next, we have the following inequalities
$$
\sum_{\nu}\sum_{x_{k}\in M_{\rho}}\psi_{\nu}f(x_{k})\  \mu \mathcal{M}_{k,\rho}-\int_{{\bf M}}f(x)dx=
$$
$$
-\sum_{\nu}\left(\sum_{k} \int _{\mathcal{M}_{k,\rho}}\psi_{\nu}f(x)dx-\sum_{k}\psi_{\nu}f(x_{k})\  \mu \mathcal{M}_{k,\rho} \right )\leq
$$
\begin{equation}
\sum_{\nu}\sum_{k}\left | \int _{\mathcal{M}_{k,\rho}}\psi_{\nu}f(x)-\psi_{\nu}f(x_{k})\  \mu \mathcal{M}_{k,\rho}dx \right |
\end{equation}
$$
\leq C(n,m)\rho^{n/2}\sum_{\nu}\sum_{x_{k}\in M_{\rho}}\sum_{1\leq |\beta|\leq 2m}\rho^{|\beta|} \|\partial
^{\beta}(\psi_{\nu}f)\|_{L_{2}(B(x_{k},\rho))},
$$
where $\ m>n.  $
Using the definition of the Sobolev norm and elliptic regularity of the operator $I+\mathcal{L}$, 
where $I$ is the identity operator on $L_{2}({\bf M})$,  we obtain the inequality (\ref{closeness}).
\end{proof}

Now we are going to prove existence of cubature formulas which are exact on $ {\mathbf E}_{\omega}({\bf M})$,
and have positive coefficients of the "right" size.

\begin{thm} 
\label{cubformula}
There exists  a  positive constant $a_{0}$,    such  that if  $\rho=a_{0}(\omega+1)^{-1/2}$, then
for any $\rho$-lattice $M_{\rho}$, there exist strictly positive coefficients $\lambda_{x_{k}}>0, 
 \  x_{k}\in M_{\rho}$, \  for which the following equality holds for all functions in $ {\mathbf E}_{\omega}({\bf M})$:
\begin{equation}
\label{cubway}
\int_{{\bf M}}fdx=\sum_{x_{k}\in M_{\rho}}\lambda_{x_{k}}f(x_{k}).
\end{equation}
Moreover, there exists constants  $\  c_{1}, \  c_{2}, $  such that  the following inequalities hold:
\begin{equation}
c_{1}\rho^{n}\leq \lambda_{x_{k}}\leq c_{2}\rho^{n}, \ n=dim\   {\bf M}.
\end{equation}
\end{thm}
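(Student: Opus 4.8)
The plan is to perturb the naive Riemann-type cubature with weights $\mu(\mathcal{M}_{k,\rho})$ into an \emph{exact} one, using the approximate-quadrature estimate (\ref{closeness}) as the source of smallness and the Plancherel--Polya inequalities (Corollary \ref{completePlPo200}) to invert the relevant Gram operator. Throughout set $V={\mathbf E}_{\omega}(\mathcal{L})$, write $M_{\rho}=\{x_{k}\}$ with $N=|M_{\rho}|$ points, and let $\mu_{k}=\mu(\mathcal{M}_{k,\rho})$; by (\ref{mkrhoway}) these satisfy $a_{1}\rho^{n}\le\mu_{k}\le a_{2}\rho^{n}$. I would first record that the constant function lies in $V$ (it is $u_{0}$, the eigenfunction for $\lambda_{0}=0$), so that $\int_{{\bf M}}f\,dx=\langle f,1\rangle_{L_{2}}$ is an honest inner product against an element of $V$, and a cubature formula with coefficients $\lambda_{k}$ is just a representation of this functional by point evaluations.

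First I would introduce the normalized sampling map $U\colon V\to\mathbb{R}^{N}$, $Uf=(\rho^{n/2}f(x_{k}))_{k}$. Rewriting Corollary \ref{completePlPo200}, the positive operator $U^{*}U$ on $V$ has spectrum in $[c_{2}^{-2},c_{1}^{-2}]$; in particular $U$ is bounded below, $U^{*}U$ is invertible, and the norms of $U$, $U^{*}$, $(U^{*}U)^{-1}$ are bounded by constants depending only on ${\bf M}$ and $\mathcal{L}$. For this to apply I would take $a_{0}\le c_{0}$ in $\rho=a_{0}(\omega+1)^{-1/2}$, so that $\rho$ is a legitimate (small) lattice spacing for which those inequalities hold. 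With $\lambda_{k}=\rho^{n/2}b_{k}$, the desired identity $\int_{{\bf M}}f=\sum_{k}\lambda_{k}f(x_{k})$ on $V$ is then exactly the single equation $U^{*}b=1$ in $V$.

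Next I would use the trial weights $b^{(0)}_{k}=\mu_{k}\rho^{-n/2}$, which are positive and of size $\rho^{n/2}$. Unwinding the definitions, $\langle f,\,U^{*}b^{(0)}-1\rangle=\sum_{k}\mu_{k}f(x_{k})-\int_{{\bf M}}f\,dx$, so the error element $e=1-U^{*}b^{(0)}\in V$ is controlled by (\ref{closeness}): since $f\in{\mathbf E}_{\omega}(\mathcal{L})$ obeys the spectral Bernstein bound $\|(I+\mathcal{L})^{|\beta|/2}f\|_{2}\le(1+\omega)^{|\beta|/2}\|f\|_{2}$ and $\rho^{2}(1+\omega)=a_{0}^{2}$, the right-hand side of (\ref{closeness}) collapses to $C(K)\rho^{n/2}\|f\|_{2}\sum_{1\le|\beta|\le K}a_{0}^{|\beta|}\le C_{0}a_{0}\rho^{n/2}\|f\|_{2}$ once $a_{0}\le 1/2$. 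Hence $\|e\|_{V}\le C_{0}a_{0}\rho^{n/2}$. Setting $b=b^{(0)}+U(U^{*}U)^{-1}e$ yields $U^{*}b=U^{*}b^{(0)}+e=1$, i.e.\ exact cubature for every $f\in V$.

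The hard part is positivity together with the sharp two-sided bound $\lambda_{k}\asymp\rho^{n}$, and this is exactly where the factor $a_{0}$ is spent. Because $\|b-b^{(0)}\|_{\ell^{2}}\le\|U\|\,\|(U^{*}U)^{-1}\|\,\|e\|\le C\,C_{0}a_{0}\rho^{n/2}$, the crude bound $\|\cdot\|_{\ell^{\infty}}\le\|\cdot\|_{\ell^{2}}$ gives the \emph{coordinatewise} estimate $|b_{k}-b^{(0)}_{k}|\le C C_{0}a_{0}\rho^{n/2}$ with no dependence on $N$. Comparing with $b^{(0)}_{k}\ge a_{1}\rho^{n/2}$, I would fix $a_{0}$ small enough that $C C_{0}a_{0}\le a_{1}/2$; then $\tfrac12 b^{(0)}_{k}\le b_{k}\le\tfrac32 b^{(0)}_{k}$, so $\lambda_{k}=\rho^{n/2}b_{k}$ is strictly positive and satisfies $\tfrac{a_{1}}{2}\rho^{n}\le\lambda_{k}\le\tfrac{3a_{2}}{2}\rho^{n}$, as required. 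The only point that needs genuine care is that all constants entering the threshold ($c_{1},c_{2}$ from Plancherel--Polya, $a_{1},a_{2}$ from (\ref{mkrhoway}), and $C(K)$ from (\ref{closeness})) are independent of $\rho$ and of the particular lattice, so that the final admissible value of $a_{0}=\min(c_{0},1/2,a_{1}/(2CC_{0}))$ is uniform in $\omega$.
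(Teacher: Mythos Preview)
Your argument is correct and follows essentially the same route as the paper. The paper represents the integration functional by a vector $v$ in the image $V=S(R_\omega)\subset\mathbb{R}^N$ of the sampling map, takes the weights $\lambda=v+(I-P)w=w+z$ with $z=v-Pw$, and bounds $\|z\|_{\ell^2}$ via (\ref{closeness}), Bernstein, and the Plancherel--Polya inequality; your construction $b=b^{(0)}+U(U^{*}U)^{-1}e$ produces exactly the same weight vector (since the correction lies in $\mathrm{Ran}\,U$ and forces $U^{*}b=1$), and your $\ell^{\infty}\le\ell^{2}$ step for positivity is the same as theirs. One small refinement: rather than invoking a uniform bound on $\|U\|$, you can note directly that $\|U(U^{*}U)^{-1}e\|_{\ell^{2}}^{2}=\langle e,(U^{*}U)^{-1}e\rangle\le c_{2}^{2}\|e\|^{2}$, which matches the paper in using only the lower Plancherel--Polya bound.
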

\begin{proof}
By using the Bernstein inequality, and our Plancherel-Polya inequalities  (\ref{completePlPo100}), and assuming that 
\begin{equation}
\rho<\frac{1}{2\sqrt{\omega+1}}
\end{equation}
we obtain from (\ref{closeness}) the following inequality:
\begin{equation}
\label{clos-band}
\left|\sum_{\nu}\sum_{x_{k}\in M_{\rho}}\psi_{\nu}f(x_{k})\  \mu \mathcal{M}_{k,\rho}-\int_{{\bf M}}f(x)dx\right|\leq
C_{1}\rho^{n/2}\sum_{|\beta|=1}^{K}\left(\rho\sqrt{1+\omega }\right)^{|\beta|}\|f\|_{2}\leq 
$$
$$
C_{2}\rho^{n}\left(\rho\sqrt{1+\omega}\right)\left(\sum_{x_{k}\in M_{\rho}} |f(x_{k})|^{2}\right)^{1/2},
\end{equation}
where $C_{2}$ is independent of  $\rho\in \left(0, (2\sqrt{\omega+1}\right)^{-1})$
and the $\rho$-lattice $M_{\rho}$.

Let $R_{\omega}(\mathcal L)$ denote the space of real-valued functions in 
${\mathbf E}_{\omega}(\mathcal L)$.  Since the eigenfunctions of ${\mathcal L}$ may be taken to be real,
we have ${\mathbf E}_{\omega}(\mathcal L) = R_{\omega}(\mathcal L) + iR_{\omega}(\mathcal L)$, so 
it is enough to show that (\ref{cubway}) holds for all $f \in R_{\omega}(\mathcal L)$.

Consider the sampling operator
$$
S: f\rightarrow \{f(x_{k})\}_{x_{k}\in M_{\rho}},
$$
which maps  $R_{\omega}(\mathcal L)$ into the space  $\mathbb{R}^{|{\mathcal M_{\rho}}|}$
with the $\ell^2$ norm.
Let $V = S(R_{\omega}(\mathcal L))$ be the image of  $R_{\omega}(\mathcal L)$ under $S$.
$V$ is a  subspace of $\mathbb{R}^{|{\mathcal M_{\rho}}|}$,
and we consider it with the induced $\ell^2$ norm.
If $u \in V$, denote the linear functional $y \to (y,u)$ on $V$ by $\ell_u$.
By our  Plancherel-Polya inequalities (\ref{completePlPo100})
,  the map 
$$
\{f(x_k)\}_{x_{k}\in M_{\rho}} \to \int _{\bf M}fdx
$$ 
is a well-defined linear functional on the finite dimensional space
$V$, and so equals $\ell_v$ for some $v \in V$, which may
or may not  have all components positive.  On the other hand, if $w$ is the vector with components $\{\mu({\mathcal M}_{k,\rho})\}, \ x_{k}\in M_{\rho}$, then $w$ might
not be in $V$, but it has all components positive and of the right size
$$
a_{1}\rho^{n}\leq \mu\left(\mathcal{M}_{k,\rho}\right)\leq a_{2}\rho^{n},
$$
for some  positive $a_{1},\  a_{2}$, independent  of $\rho$ and the lattice $M_{\rho}=\{x_{k}\}$.
Since, for any vector $u \in V$ the norm of $u$  is exactly the norm of the corresponding functional 
$\ell_u$,  inequality (\ref{clos-band})
tells us that 
\begin{equation}
\label{2}
\|Pw-v\| \leq \|w-v\| \leq C_2\rho^n \left(\rho\sqrt{1+\omega}\right),
\end{equation}
where  $P$ is the orthogonal projection onto $V$. Accordingly, if $z$ is the real vector $v-Pw$, then
\begin{equation}
\label{3}
v+(I-P)w = w + z ,
\end{equation}
where $\|z\| \leq C_2\rho^n \left(\rho\sqrt{1+\omega}\right)$.  Note, that all components of the vector $w$ 
 are of order $O(\rho^{n})$, while the order of $\|z\|$ is  $O(\rho^{n+1})$. Accordingly, if 
 $\rho\sqrt{1+\omega}$ is sufficiently small, then 
$\lambda := w + z$ has all components positive and of the right size.  Since $\lambda = v + (I-P)w$, the linear
functional $y \to (y,\lambda)$ on $V$ equals $\ell_v$.  In other words, if the vector $\lambda$ has components
 $\{\lambda_{x_{k}}\}, \ x_{k}\in M_{\rho},$ then 
$$
\sum_{x_{k}\in M_{\rho}}f(x_{k})\lambda_{x_{k}} = \int_{\bf M} f dx
$$ 
for all $f \in R_{\omega}(\mathcal L)$, and hence for all $f \in {\mathbf E}_{\omega}(\mathcal L)$, as desired.

\end{proof}

\section{On the product of eigenfunctions of the Casimir operator $\mathcal{L}$ on compact
 homogeneous manifolds}

In this section, we will use the assumption that ${\bf M}$ is a compact homogeneous manifold,
and that ${\mathcal L}$ is the operator of (\ref{Laplacian}), in an essential way.

The following Theorem \ref{prodthm} plays a crucial role in our construction of Parseval frames in section 8.
Note that  some parts of the proof of this
Theorem can be found in the papers \cite{Pes00}, \cite{Pes08}.

\begin{thm}
\label{prodthm}
If ${\bf M}=G/K$ is a compact homogeneous manifold and $\mathcal{L}$
is defined as in (\ref{Laplacian}), then for any $f$ and $g$ belonging
to ${\mathbf E}_{\omega}(\mathcal{L})$,  their product $fg$ belongs to
${\mathbf E}_{4d\omega}(\mathcal{L})$, where $d$ is the dimension of the
group $G$.

\end{thm}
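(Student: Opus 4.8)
The plan is to show that if $f,g \in {\mathbf E}_{\omega}(\mathcal{L})$, then the product $fg$ satisfies a Bernstein inequality with respect to the system $\mathbb{D} = \{D_1,\dots,D_d\}$, and then invoke the embeddings (\ref{B}) and (\ref{DD}) to conclude that $fg$ is band-limited with the stated bandwidth. The key observation is that each $D_j$ is a first-order differential operator arising from a vector field, so it obeys the Leibniz rule $D_j(fg) = (D_jf)g + f(D_jg)$. This is the essential structural fact that is \emph{not} available for the Laplace--Beltrami operator directly, and it is precisely why we work with the Casimir operator $\mathcal{L}$ of (\ref{Laplacian}), whose first-order constituents $D_j$ are derivations.

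First I would record that by the embedding (\ref{A}), any $f \in {\mathbf E}_{\omega}(\mathcal{L})$ lies in the Bernstein space $\mathbf{B}_{\sqrt{\omega}}(\mathbb{D})$; that is, for every word $D_{i_1}\cdots D_{i_k}$ one has $\|D_{i_1}\cdots D_{i_k} f\|_p \leq \omega^{k/2}\|f\|_p$, and likewise for $g$. Next I would apply an iterated Leibniz rule: for any multi-index word of length $k$ in the $D_i$'s,
\[
D_{i_1}\cdots D_{i_k}(fg) = \sum (D_{j_1}\cdots D_{j_a} f)(D_{l_1}\cdots D_{l_b} g),
\]
where the sum ranges over the $2^k$ ways of distributing the $k$ operators between the two factors (so $a+b=k$). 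Here I would estimate the $L_\infty$ norm: since $\|(D\cdots D f)(D\cdots D g)\|_\infty \leq \|D\cdots D f\|_\infty \|D\cdots D g\|_\infty$, the Bernstein inequalities for $f$ and $g$ in the $p=\infty$ norm (valid because the Bernstein spaces are independent of $p$, as recalled after (\ref{Bern})) give each summand a bound of $\omega^{a/2}\omega^{b/2}\|f\|_\infty\|g\|_\infty = \omega^{k/2}\|f\|_\infty\|g\|_\infty$. Summing the $2^k$ terms yields
\[
\|D_{i_1}\cdots D_{i_k}(fg)\|_\infty \leq 2^k \omega^{k/2}\|f\|_\infty\|g\|_\infty = (2\sqrt{\omega})^k \|f\|_\infty\|g\|_\infty.
\]
Comparing with $\|fg\|_\infty \leq \|f\|_\infty\|g\|_\infty$, I would conclude that $fg$ satisfies the Bernstein inequality with parameter $2\sqrt{\omega}$, i.e. $fg \in \mathbf{B}_{2\sqrt{\omega}}(\mathbb{D})$.

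Finally I would feed this into the right-hand embedding of (\ref{B}): since $fg \in \mathbf{B}_{2\sqrt{\omega}}(\mathbb{D})$ with $2\sqrt{\omega}$ playing the role of the bandwidth parameter, we get $fg \in {\mathbf E}_{(2\sqrt{\omega})^2 d}(\mathcal{L}) = {\mathbf E}_{4d\omega}(\mathcal{L})$, which is exactly the claimed conclusion. The step I expect to require the most care is the bookkeeping in the iterated Leibniz expansion and confirming that the constant works out to exactly $2^k$ (so that the bandwidth factor is $4d$ and not something larger); one must be careful that the Bernstein bounds are applied in a single fixed $L_p$-norm, which is legitimate precisely because of the $p$-independence $\mathbf{B}_{\omega}^{p}(\mathbb{D}) = \mathbf{B}_{\omega}(\mathbb{D})$ established in \cite{Pes08}. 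I would use the $L_\infty$ norm throughout so that the product estimate $\|uv\|_\infty \leq \|u\|_\infty\|v\|_\infty$ applies cleanly.
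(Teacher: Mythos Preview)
Your overall strategy---Leibniz rule for the $D_j$, $L_\infty$ Bernstein bounds via the $p$-independence of $\mathbf{B}_\omega(\mathbb{D})$, then passage to an ${\mathbf E}$-space---is exactly the paper's. But there is a genuine gap at the step where you conclude $fg \in \mathbf{B}_{2\sqrt{\omega}}(\mathbb{D})$. What you have established is
\[
\|D_{i_1}\cdots D_{i_k}(fg)\|_\infty \leq (2\sqrt{\omega})^k\,\|f\|_\infty\|g\|_\infty,
\]
whereas the definition of $\mathbf{B}_{2\sqrt{\omega}}^{\infty}(\mathbb{D})$ requires $\|D_{i_1}\cdots D_{i_k}(fg)\|_\infty \leq (2\sqrt{\omega})^k\,\|fg\|_\infty$. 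The inequality $\|fg\|_\infty \leq \|f\|_\infty\|g\|_\infty$ that you cite goes the wrong way for this replacement: you cannot trade the larger quantity $\|f\|_\infty\|g\|_\infty$ for the (in general strictly) smaller one $\|fg\|_\infty$ in an upper bound. So the embedding (\ref{B}), which is stated for the Bernstein space with constant $1$, does not apply as written.

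The paper closes this gap by first proving a lemma you did not state: $h \in {\mathbf E}_{\omega}(\mathcal{L})$ if and only if $\|\mathcal{L}^k h\|_2 \leq C(h,\omega)\,\omega^k$ for all $k$, where the constant may depend on $h$. With this in hand one bypasses the $\mathbf{B}_{2\sqrt{\omega}}(\mathbb{D})$ detour entirely: expand $\mathcal{L}^k(fg)$ as a sum of $(4d)^k$ products $(D_{i_1}\cdots D_{i_m}f)(D_{j_1}\cdots D_{j_{2k-m}}g)$, bound each in $L_\infty$ by $\omega^{m/2}\|f\|_\infty\cdot\omega^{(2k-m)/2}\|g\|_\infty = \omega^k\|f\|_\infty\|g\|_\infty$, and integrate to obtain $\|\mathcal{L}^k(fg)\|_2 \leq C(f,g,{\bf M})(4d\omega)^k$. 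The relaxed-constant characterization then gives $fg \in {\mathbf E}_{4d\omega}(\mathcal{L})$ directly. Your Leibniz bookkeeping is equivalent to this (your $2^k$ on a word of length $k$ becomes $4^k\cdot d^k$ on the $d^k$ words of length $2k$ comprising $\mathcal{L}^k$), so once you insert the relaxed-constant characterization, your argument becomes correct and essentially identical to the paper's.
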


\begin{proof} First, we are going to show that a function $f\in
L_{2}({\bf M})$ belongs to the space ${\mathbf E}_{\omega}(\mathcal{L})$ if and
only if there exists a constant $C(f,\omega)$ such that  the
following Bernstein inequality is satisfied  for all natural $k$
\begin{equation}
\|\mathcal{L}^{k}f\|\leq C(f,\omega)\omega^{k}\|f\|.
\end{equation}
The fact that the above Bernstein inequality holds true for any
$f\in {\mathbf E}_{\omega}(\mathcal{L})$ with $C(f,\omega)=1$ is obvious.
Conversely, assume that
$$
\lambda_{m}\leq \omega<\lambda_{m+1}.
$$
If a vector $f$ belongs to the space
$\mathbf{E}_{\omega}(\mathcal{L})$ and the Fourier series
\begin{equation}
f=\sum_{j=0}^{\infty}c_{j}u_{j}\label{Fseries},
\end{equation}
$$
c_{j}(f)=<f,u_{j}>=\int_{{\bf M}}f(x)\overline{u_{j}(x)}dx,
$$
contains terms with $j\geq m+1$, then
$$
\lambda_{m+1}^{2k}\sum_{j=m+1}^{\infty}|c_{j}|^{2}\leq
\sum_{j=m+1}^{\infty}|\lambda_{j}^{k}c_{j}|^{2}\leq
\|\mathcal{L}^{k}f\|^{2}\leq
C^{2}\omega^{2k}\|f\|^{2},\>\>\>C=C(f,\omega),
$$
which implies
$$
\sum_{j=m+1}^{\infty}|c_{j}|^{2}\leq C^{2}
\left(\frac{\omega}{\lambda_{m+1}} \right)^{2k}\|f\|^{2}.
$$
In the last inequality the fraction $\omega/\lambda_{m+1}$ is
strictly less than $1$  and $k$ can be any natural number. This
shows that the series (\ref{Fseries}) does not contain terms with
$j\geq m+1$, i.e.\ the function $f$ belongs to $
\textbf{E}_{\omega}(\mathcal{L})$.

Now, since every smooth vector field on ${\bf M}$ is a differentiation
of the algebra $C^{\infty}({\bf M})$, one has that for every operator
$D_{j}, 1\leq j\leq d,$ the following equality holds for any two
smooth functions $f$ and $g$ on ${\bf M}$:
\begin{equation}
D_{j}(fg)=fD_{j}g+gD_{j}f, \>\>\> 1\leq j\leq d.
\end{equation}
Using formula (\ref{Laplacian}) one can easily verify that for any
natural $k\in \mathbb{N}$, the term
$\mathcal{L}^{k}\left(fg\right)$ is a sum of $\>\>d^{k},
\>\>\>(d=dim G),$ terms of the following form:
\begin{equation}
D_{j_{1}}^{2}...D_{j_{k}}^{2}(fg),\ 1\leq j_{1},...,j_{k}\leq d.
\end{equation}
For every $D_{j}$ one has
$$
D_{j}^{2}(fg)=f(D_{j}^{2}g)+2(D_{j}f)(D_{j}g)+g(D_{j}^{2}f).
$$
Thus, the function $\mathcal{L}^{k}\left(fg\right)$ is a sum of
$(4d)^{k}$ terms of the form
$$
(D_{i_{1}}...D_{i_{m}}f)(D_{j_{1}}...D_{j_{2k-m}}g).
$$
This implies that
\begin{equation}
\left|\mathcal{L}^{k}\left(fg\right)\right|\leq
(4d)^{k}\sup_{0\leq m\leq 2k}\sup_{x,y\in
{\bf M}}\left|D_{i_{1}}...D_{i_{m}}f(x)\right|\left|D_{j_{1}}...D_{j_{2k-m}}g(y)\right|.\label{estim3}
\end{equation}
Let us show that the following inequalities hold:
\begin{equation}
\|D_{i_{1}}...D_{i_{m}}f\|_{2}\leq
\omega^{m/2}\|f\|_{2}\label{estim1}
\end{equation}
and
\begin{equation}
\|D_{j_{1}}...D_{j_{2k-m}}g\|_{2}\leq
\omega^{(2k-m)/2}\|g\|_{2}\label{estim2}
\end{equation}
for all $f,g \in {\mathbf E}_{\omega}(\mathcal{L})$.  First, we note  that the operator
$$
-\mathcal{L}=D_{1}^{2}+...+D_{d}^{2}
$$
commutes with every $D_{j}$ (see the explanation before the
formula (\ref{Laplacian}) ).
 The same is
true for $\mathcal{L}^{1/2}$. But then
$$
\|\mathcal{L}^{1/2}f\|_{2}^{2}=<\mathcal{L}^{1/2}f,\mathcal{L}^{1/2}f>=<\mathcal{L}
f,f>=
$$
$$
-\sum_{j=1}^{d}<D_{j}^{2}f,f>=\sum_{j=1}^{d}<D_{j}f,D_{j}f>=
\sum_{j=1}^{d}\|D_{j}f\|_{2}^{2},
$$
and also
$$
\|\mathcal{L}f\|_{2}^{2}=\|\mathcal{L}^{1/2}\mathcal{L}^{1/2}f\|_{2}^{2}=
\sum_{j=1}^{d}\|D_{j}\mathcal{L}^{1/2}f\|_{2}^{2}=
$$
$$
\sum_{j=1}^{d}\|\mathcal{L}^{1/2}D_{j}f\|_{2}^{2}=\sum_{j,k=1}^{d}\|D_{j}D_{k}f\|_{2}^{2}.
$$
From here by induction on $s\in \mathbb{N}$ one can  obtain the
following equality:
\begin{equation}
\|\mathcal{L}^{s/2}f\|_{2}^{2}=\sum_{1\leq i_{1},...,i_{s}\leq
d}\|D_{i_{1}}...D_{i_{s}}f\|_{2}^{2},\ s\in \mathbb{N}, \label{eq0}
\end{equation}
which implies the estimates (\ref{estim1}) and (\ref{estim2}). For
example, to get (\ref{estim1}) we take a function $f$ from
${\mathbf E}_{\omega}(\mathcal{L})$, an $m\in \mathbb{N}$ and  do the
following
\begin{equation}
\|D_{i_{1}}...D_{i_{m}}f\|_{2}\leq \left(\sum_{1\leq
i_{1},...,i_{m}\leq
d}\|D_{i_{1}}...D_{i_{m}}f\|_{2}^{2}\right)^{1/2}=
$$
$$
\|\mathcal{L}^{m/2}f\|_{2}\leq
\omega^{m/2}\|f\|_{2}.\label{estim4}
\end{equation}
 In a similar way we obtain (\ref{estim2}).

In terminology of the paper \cite{Pes08} it means that if $f$ and
$g$ belong to ${\mathbf E}_{\omega}(\mathcal{L})$ they also belong to
$\mathbf{B}_{\sqrt{\omega}}^{2}(\mathbb{D})$, where
$\mathbb{D}=\{D_{1},...D_{d}\}$. But it was shown in \cite{Pes08},
Theorem 3.3, that $\mathbf{B}_{\sqrt{\omega}}^{2}(\mathbb{D})=
\mathbf{B}_{\sqrt{\omega}}^{\infty}(\mathbb{D})$ which means that
the following inequalities hold

\begin{equation}
\left|D_{i_{1}}...D_{i_{m}}f\right|\leq \omega^{m/2}\|f\|_{\infty}
\end{equation}
and similarly
\begin{equation}
\left|D_{j_{1}}...D_{j_{2k-m}}g\right|\leq
\omega^{(2k-m)/2}\|g\|_{\infty}.
\end{equation}
Thus, for $f,g \in {\mathbf E}_{\omega}(\mathcal{L})$ we obtain the estimate
$$
\left|D_{i_{1}}...D_{i_{m}}f\right|\left|D_{j_{1}}...D_{j_{2k-m}}g\right|\leq
\omega^{k}\|f\|_{\infty}\|g\|_{\infty}.
$$
Now, by using (\ref{estim3}) we arrive at the following estimate:
$$
\left|\mathcal{L}^{k}\left(fg\right)\right|\leq
\left(\|f\|_{\infty}\|g\|_{\infty}\right)(4d\omega)^{k}.
$$
We square both sides of this inequality and integrate over the
compact manifold ${\bf M}$. We find that, for the  constant
$C({\bf M},f,g)=\sqrt{Vol({\bf M})}\|f\|_{\infty}\|g\|_{\infty}$, the following
inequality holds for all $k\in \mathbb{N}$
$$
\|\mathcal{L}^{k}(fg)\|\leq C({\bf M},f,g)\left(4d\omega\right)^{k}.
$$
According to previous steps of the proof, this implies that the
product $fg$ belongs to ${\mathbf E}_{4d\omega}(\mathcal{L})$. The Theorem
is proved.
\end{proof}
\begin{rem} The last part of the Theorem can be proved without
referring to the paper \cite{Pes08}. Indeed, the formula
(\ref{estim3}) along with the formula (\ref{estim4}) imply the
estimate
\begin{equation}
\|\mathcal{L}^{k}(fg)\|_{2}\leq (4d)^{k}\sup_{0\leq m\leq
2k}\|D_{i_{1}}...D_{i_{m}}f\|_{2}\|D_{j_{1}}...D_{j_{2k-m}}g\|_{\infty}\leq
$$
$$(4d)^{k}\omega^{m/2}\|f\|_{2}\sup_{0\leq m\leq
2k}\|D_{j_{1}}...D_{j_{2k-m}}g\|_{\infty}.
\end{equation}
Using the Sobolev embedding Theorem and elliptic regularity of
$\mathcal{L}$, we obtain for every $s>\frac{dim {\bf M}}{2}$
\begin{equation}
\|D_{j_{1}}...D_{j_{2k-m}}g\|_{\infty}\leq
C({\bf M})\|D_{j_{1}}...D_{j_{2k-m}}g\|_{H^{s}({\bf M})}\leq
$$
$$
C({\bf M})\left\{\|D_{j_{1}}...D_{j_{2k-m}}g\|_{2}+
\|\mathcal{L}^{s/2}D_{j_{1}}...D_{j_{2k-m}}g\|_{2}\right\},
\end{equation}
where $H^{s}({\bf M})$ is the Sobolev space of $s$-regular functions on
${\bf M}$. Since the operator $\mathcal{L}$ commutes with each of the
operators $D_{j}$, the estimate (\ref{estim4}) gives the following
inequality:
\begin{equation}
\|D_{j_{1}}...D_{j_{2k-m}}g\|_{\infty}\leq
C({\bf M})\left\{\omega^{k-m/2}\|g\|_{2}+\omega^{k-m/2+s}\|g\|_{2}\right\}\leq
$$
$$
C({\bf M})\omega^{k-m/2}\left\{\|g||_{2}+\omega^{s/2}\|g\|_{2}\right\}=
C({\bf M},g,\omega,s)\omega^{k-m/2},\>\>\>s>\frac{dim\  {\bf M}}{2}.
\end{equation}
Finally we have the following estimate:
\begin{equation}
\|\mathcal{L}^{k}(fg)\|_{2}\leq
C({\bf M},f,g,\omega,s)(4d\omega)^{k},\>\>\>s>\frac{dim \ {\bf M}}{2},\>\>k\in
\mathbb{N},
\end{equation}
which leads to the same result that was obtained above.
\end{rem}

\section{ Results on General Manifolds}

In this section, we explain some general results on compact manifolds.  We start afresh in our
notation.  

Let $({\bf M},g)$ be a smooth, connected, compact Riemannian manifold without boundary
with (\cite{H3}) Riemannian measure $\mu$.
Let $L$ be a smooth, positive, second order elliptic differential operator on ${\bf M}$,
whose principal symbol $\sigma_2(L)(x,\xi)$ is positive on  $\{(x,\xi) \in T^*{\bf M}:\ \xi \neq 0\}$.
For $x,y \in {\bf M}$, let $d(x,y)$ denote the geodesic distance from $x$ to $y$.

In \cite{gmcw} and \cite{gmbes}, the first author and Azita Mayeli proved a number of general results
about the kernels of $f(t^{2}L)$ (for $f \in {\mathcal S}(\mathbb{R}^{+})$) and about frames constructed from 
such kernels, in the Besov space framework -- in the special case in which $L$ was $\Delta$, the Laplace-Beltrami
operator on ${\bf M}$.  In this section, we review some of these results, and argue that they generalize
to the situation in which $L$ is general.  (In \cite{gmcw} -- \cite{gmbes}, it was assumed that the manifold
was orientable, but this hypothesis was not actually used and may be dropped.)

First, we have:

\begin{thm}
\label{nrdglc}
(Near-diagonal localization) Say $f \in \mathcal{ S}(\mathbb{R}^{+})$ (the space of restrictions to the nonnegative real axis of Schwartz functions on $\bf{R}$).
For $t > 0$, let $K_t(x,y)$ be the kernel of $f(t^{2}L)$.  Then: \\
(a) Say $f(0) = 0$.  Then for every pair of
$C^{\infty}$ differential operators $X$ $($in $x)$ and $Y$  $($in $y)$ on ${\bf M}$,
and for every integer $N \geq 0$, there exists $C_{N,X,Y}$ as follows.  Suppose
$\deg X = j$ and $\deg Y = k$.  Then
\begin{equation}
\label{diagest}
t^{n+j+k} \left|\left(\frac{d(x,y)}{t}\right)^N XYK_t(x,y)\right| \leq C_{N,X,Y} 
\end{equation}
for all $t > 0$ and all $x,y \in {\bf M}$.\\
(b) For general $f$, the estimate (\ref{diagest}) at least holds for $0 < t \leq 1$.
\end{thm}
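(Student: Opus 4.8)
The plan is to follow the finite-propagation-speed method of Cheeger--Gromov--Taylor, in the form in which it was carried out by the first author and Mayeli in \cite{gmcw} for the special case $L=\Delta$, and to verify that the argument uses only three features of $L$, each available for a general smooth positive second order elliptic self-adjoint operator: a functional calculus (from self-adjointness), finite propagation speed for the wave group generated by $\sqrt{L}$, and interior elliptic regularity. Since $f\in\mathcal{S}(\mathbb{R}^{+})$, the function $F(\xi)=f(\xi^{2})$ is an even Schwartz function on $\mathbb{R}$, and $f(t^{2}L)=F(t\sqrt{L})$. Fourier inversion then gives the representation
\begin{equation}
\label{wavrep}
F(t\sqrt{L}) = \frac{1}{2\pi t}\int_{-\infty}^{\infty}\hat{F}(s/t)\,\cos(s\sqrt{L})\,ds,
\end{equation}
which expresses the smoothing operator $f(t^{2}L)$ in terms of the wave propagator $\cos(s\sqrt{L})$.

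The key geometric input is that $\cos(s\sqrt{L})$ solves $\partial_{s}^{2}u+Lu=0$ and therefore has finite propagation speed: because $\sigma_{2}(L)(x,\xi)$ is a positive definite quadratic form in $\xi$, it defines a distance $d_{L}$ on ${\bf M}$, and the Schwartz kernel of $\cos(s\sqrt{L})$ is supported in $\{(x,y): d_{L}(x,y)\leq |s|\}$. Consequently, in (\ref{wavrep}) only the range $|s|\geq d_{L}(x,y)$ contributes to $K_{t}(x,y)$; since $\hat{F}$ is Schwartz, $|\hat{F}(s/t)|\leq C_{M}(1+|s|/t)^{-M}$, and on this range this is at most $C_{M}(1+d_{L}(x,y)/t)^{-M}$. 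This is what produces the decay factor in (\ref{diagest}), after noting that on the compact manifold ${\bf M}$ the metric $d_{L}$ is comparable to the given geodesic distance $d$, so that decay in $d_{L}(x,y)/t$ is equivalent to decay in $d(x,y)/t$. Checking this comparability is the only genuinely new point in passing from $L=\Delta$ to general $L$.

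To upgrade the $L^{2}$ bounds coming from (\ref{wavrep}) to the pointwise estimate on $XYK_{t}(x,y)$, I would use interior elliptic regularity for $L$ together with Sobolev embedding: controlling $K_{t}$ and its derivatives pointwise reduces to controlling finitely many local $L^{2}$ norms of expressions $X'Y'K_{t}$, and these are bounded through the spectral calculus, using that for each $m$ one has $\|L^{m}F(t\sqrt{L})\|_{L^{2}\to L^{2}}=\sup_{\xi\geq 0}|(\xi/t)^{2m}F(\xi)|\leq C_{m}t^{-2m}$ because $\xi\mapsto\xi^{2m}F(\xi)$ is bounded. Keeping track of the powers of $t$ in this scheme---each factor of $L$ costing $t^{-2}$, hence each derivative costing $t^{-1}$, together with the $n$-dimensional diagonal scaling contributing $t^{-n}$---yields exactly the normalization $t^{n+j+k}$. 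The delicate bookkeeping here, which is where most of the work of \cite{gmcw} lies, is to obtain the \emph{sharp} power $t^{-n}$ at the diagonal rather than the crude $t^{-2s}$, $s>n/2$, coming from naive duality; this is achieved by localizing the estimates with cutoffs supported in balls of radius comparable to $t$ and exploiting (\ref{wavrep}) directly.

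Finally, the hypothesis $f(0)=0$ in part (a) is used only to make the estimate uniform as $t\to\infty$: if $f(0)\neq 0$, then the bottom of the spectrum of $L$ contributes a term of fixed size $f(0)$ times the projection onto $\ker L$, and $t^{n}$ times this diverges. For part (b) I would restrict to $0<t\leq 1$ and write $f=g+f(0)\eta$ with $g,\eta\in\mathcal{S}(\mathbb{R}^{+})$, $g(0)=0$ and $\eta(0)=1$; part (a) handles $g(t^{2}L)$ for all $t$, while for $\eta(t^{2}L)$ the same finite-propagation argument applies, the estimates now being uniform simply because $t$ ranges over the bounded interval $(0,1]$. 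I expect the main obstacle to be the rigorous handling of the distributional wave kernel in (\ref{wavrep})---controlling $\cos(s\sqrt{L})$ uniformly enough in $s$ to integrate against $\hat{F}(s/t)$ and to extract pointwise bounds on arbitrarily high derivatives---together with the verification that the propagation metric $d_{L}$ is comparable to $d$, which is precisely what licenses replacing $\Delta$ by a general $L$.
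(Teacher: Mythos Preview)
Your finite-propagation-speed outline is correct in spirit, and the wave representation you write down is indeed the engine behind the off-diagonal decay; your remark that $d_L$ and $d$ are comparable on a compact manifold is also right and is exactly the point that licenses the passage from $\Delta$ to general $L$ in that part of the argument. Where your proposal diverges from the paper is in the list of analytic ingredients. The paper (which defers the actual proof to \cite{gmcw}) isolates four properties of $L$ that the argument there uses: (i) sharp Weyl asymptotics $N(\lambda)=c\lambda^{n/2}+O(\lambda^{(n-1)/2})$; (ii) $\sqrt{L}$ is a classical first-order elliptic pseudodifferential operator (Seeley); (iii) Strichartz's theorem that $p(\sqrt{L})\in OPS^m_{1,0}({\bf M})$ for any one-variable symbol $p\in S^m_1(\mathbb{R})$; and (iv) finite propagation speed. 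You keep (iv) but replace (i)--(iii) by direct spectral-calculus bounds plus interior elliptic regularity and Sobolev embedding --- a legitimate alternative, closer to the Cheeger--Gromov--Taylor arguments on open manifolds, but not the route taken in \cite{gmcw}. The practical difference shows up precisely where you flag it: obtaining the sharp on-diagonal power $t^{-n}$ and the correct derivative scaling. In the paper's route these fall out of the pseudodifferential calculus once $f(t^2L)$ is recognised as lying in the right symbol class, with Weyl's theorem supplying the spectral counting; in your route they require the localization-in-balls bookkeeping that you acknowledge is the hard step and leave unspecified. Both approaches succeed on a compact manifold --- the $\Psi$DO machinery is heavier to invoke but makes the estimates systematic, while your approach is more self-contained but leaves more to be checked by hand.
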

This was proved in section 4 of \cite{gmcw}, in the special case
in which $L = \Delta$, the Laplace-Beltrami operator on ${\bf M}$.  (A similar result to part (a) had been
proved earlier in \cite{narc1} and \cite{narc2} in the special case where ${\bf M}$ was a sphere and $f$ had compact
support away from the origin.)  The arguments in \cite{gmcw} used
certain properties of $\Delta$, which we shall now argue are shared by general $L$.  Once this is
observed, the proofs in \cite{gmcw} go through just the same as in \cite{gmcw}, and will not be
repeated here.

Let us then list the properties of $L$ which were used in section 4 of \cite{gmcw} in the special
case $L = \Delta$, and verify that they hold for general $L$.  
\begin{enumerate}
\item
For $\lambda > 0$, let $N(\lambda)$ denote the number of eigenvalues of
$L$ which are less than or equal to $\lambda$ (counted with respect to 
multiplicity).  Then for some $c > 0$,
$N(\lambda) = c\lambda^{n/2} + O(\lambda^{(n-1)/2})$.
\item
$\sqrt{L}$ is a positive elliptic pseudodifferential operator on ${\bf M}$ of order $1$.
\item
{\em If $p(\xi) \in S^{m}_{1}(\bf{R})$ (an ordinary symbol of order $m$ on $\bf{R}$, depending only
on the ``dual variable'' $\xi$), then $p(\sqrt{L}) \in OPS^m_{1,0}({\bf M})$.}
\item
Say $h \in \mathcal{S}(\bf{R})$ is even, and satisfies $\ supp\  \hat{h} \subseteq(-1,1)$, and let
$K_t^h(x,y)$ be the kernel of $h(t\sqrt{L})$.  Then for some $C > 0$, if $d(x,y) > C|t|$,
then $K_t^h(x,y) = 0$.
\end{enumerate}

\vspace{.3cm}
\#1 is a sharp form of Weyl's theorem, which is true for any second order elliptic differential 
operator on ${\bf M}$ whose principal symbol is positive on  $\{(x,\xi) \in T^*{\bf M}:\ \xi \neq 0\}$.
(\cite{Sogge93}, Corollary 4.2.2).  (Actually, weaker forms of Weyl's theorem would suffice for the
arguments in \cite{gmcw}.)\\

\#2 was used implicitly in \cite{gmcw} (specifically, in the use of \#3). 
It follows from Theorem 2 of Seeley \cite{seel}, as Seeley himself pointed out in that article.
That theorem tells us, in particular, that
if $S$ is a classical positive invertible elliptic pseudodifferential operator of order $k > 0$ on ${\bf M}$,
whose principal symbol is positive on  $\{(x,\xi) \in T^*{\bf M}:\ \xi \neq 0\}$, then $\sqrt{S}$ is
a classical positive elliptic pseudodifferential operator on ${\bf M}$ of order $k/2$.  To apply this theorem to 
obtain \#2,
one lets $P$ be the projection onto the null space of $L$, which is a finite-dimensional space of smooth functions.
Thus $P$ has a smooth kernel.  Then one notes that $\sqrt{L} = \sqrt{L+P} - P$.\\

\#3 is an immediate consequence of the main theorem of Strichartz \cite{Stric72}.  In fact, that theorem tells us, that
if $S$ is a self-adjoint elliptic operator in $OPS^1_{1,0}({\bf M})$, then $p(S) \in OPS^m_{1,0}({\bf M})$.\\

\#4 is a consequence of the finite speed of propagation property of the wave equation.  With no claim of originality,
we now explain this in some detail.  In this discussion, all differential operators and functions will be
taken to be smooth, without further comment.

Suppose that $L_1$ is a second-order differential operator on an open set $V$ in $\bf{R}^{n}$, that $L_1$ is elliptic, and
in fact that, for some $c > 0$,  its
principal symbol $\sigma_2(L_1)(x,\xi) \geq c^2|\xi|^2$, for all $(x,\xi) \in V \times \bf{R}^{n}$.  
Suppose that $U \subseteq \bf{R}^{n}$ is open, and that $\overline{U} \subseteq V$.  
Then if $\ supp F,G \subseteq K \subseteq U$, where $K$ is compact, then any solution $u$ of
\begin{align}
(\frac{\partial^2}{\partial t^2} + L_1)u = 0 \\
u(0,x) = F(x)\\
u_t(0,x) = G(x)
\end{align}
on $U$ satisfies $\ supp \ u(t,\cdot) \subseteq \{x: $ dist $(x,K) \leq |t|/c\}$.\\
\ \\
(This is a special case of Theorem 4.5 (iii) of \cite{Tay81}.  In that reference, $V = \bf{R}^{n}$.  But we can
always extend $L_1$ from $U$ to an operator on all of $\bf{R}^{n}$ satisfying the hypotheses, by letting
$L_1' = \psi L_1 + c^2(1-\psi)\Delta$ for a cutoff function $\psi \in C_c^{\infty}(V)$ which equals $1$
in a neighborhood of $\overline{U}$.)

It is an easy consequence of this that a similar result holds on manifolds. 
With $L$ as before, let us
look at the problem
\begin{align}
(\frac{\partial^2}{\partial t^2} + L)u = 0 \\
u(0,x) = F(x)\\
u_t(0,x) = G(x)
\end{align}
on ${\bf M}$.  The first thing to note is that the problem has a unique solution in any open $t$-interval
about zero.  Namely, if
$F = \sum_k a_k \varphi_k$ and $G = \sum_k b_k \varphi_k$, where the $\varphi_k$ are an orthonormal basis of
eigenfunctions of $L$, with corresponding eigenvalues $\lambda_k$, then
the solution is
\[ u(x,t) = \sum[a_k \cos (\sqrt{\lambda_k} t) + b_k \frac{\sin (\sqrt{\lambda_k} t)}{\sqrt{\lambda_k}}]\varphi_k(x), \]
where we interpret $\frac{\sin(\sqrt{\lambda_k} t)}{\sqrt{\lambda_k}}$ as $t$ if $\lambda_k = 0$.  Note also that
\begin{equation}
\label{cosis}
u = \cos(t\sqrt{L})F \mbox{ is the solution if } G \equiv 0.
\end{equation}
We then {\bf claim} that there is a $C > 0$, depending only on ${\bf M}$ and
$L$, such that if $\ supp\  F,G \subseteq K \subseteq {\bf M}$, then 
the solution $u$ satisfies $\ supp \ u(t,\cdot) \subseteq \{x: d(x,K) \leq C|t|\}$, where
now $d$ is geodesic distance.  This is proved as follows:\\
\begin{itemize}
\item
It is enough to show that, for some $\delta > 0$, the result is true whenever $|t| < \delta$.  For,
suppose that this is known.  It suffices then to show that if, for some $T > 0$, the result is true 
whenever $|t| < T$, then it is also true whenever $|t| < T+\delta$.  For this, say $T \leq t < T + \delta$,
and select $t_0 < T$ with $t-t_0 < \delta$.  
By assumption, $\ supp \ u(t_0,\cdot) \subseteq K' := \{x: d(x,K) \leq Ct_0\}$, and thus also
$\ supp\  u_t(t_0,\cdot) \subseteq K'$.  We clearly have that $u(t,x) = v(t-t_0,x)$, where
$v$ is the solution of 
\begin{align}
(\frac{\partial^2}{\partial t^2} + L)v = 0 \\
v(0,x) = u(t_0,x)\\
v_t(0,x) = u_t(t_0,x)
\end{align}
Thus 
\[ \ supp \ u(t,\cdot) = \ supp \ v(t-t_0,\cdot) \subseteq \{x: d(x,K') \leq C(t-t_0)\} 
\subseteq \{x: d(x,K) \leq Ct\}\]
 as claimed.  Similarly if $-T \geq t \geq -T-\delta$.
\item
It suffices to show that, for some $\delta, \epsilon > 0$, the result is true whenever $|t| < \delta$, and
the supports of $F$ and $G$ are both contained in an open ball $B$ of radius $\epsilon$.  For, we could then cover ${\bf M}$
by a finite number of such open balls, and choose a partition of unity $\{\zeta_j\}$ subordinate to this covering.
If we let $(f_j,g_j) = (\zeta_j f, \zeta_j g)$, and if we let $u_j$ be the solution with data $f_j, g_j$ in place
of $f,g$, then surely $u = \sum_j u_j$.  Then surely $\ supp\  u(t,\cdot) \subseteq \{x: d(x,K) \leq C|t|\}$ 
as desired.
\item
To find appropriate $\delta, \epsilon$, one need only cover ${\bf M}$ with balls $\{B_k\}$ of some radius $\epsilon$,
for which the balls $B_k'$ with the same centers and radius $2\epsilon$ are charts, on which, if we use
local coordinates, the geodesic distance is comparable to the Euclidean distance.  The existence of a 
suitable $\delta, C$ now follows at once from the aforementioned result for the wave equation on open 
subsets of $\mathbb{C}^{n}$.  This proves the ``claim''.
\end{itemize}

To prove \#4, it suffices to write (for some $c$)
\begin{equation}
\label{wavetrck}
h(t{\sqrt L})F = c\int_{-1}^{1} \hat{h}(s) \cos(s t{\sqrt L})F ds
\end{equation}
for any $F \in C^{\infty}({\bf M})$.  (This is easily verified by using the eigenfunction expansion
of $F$ and the Fourier inversion formula.)  
\#4 follows at once from (\ref{cosis}) and the ``claim''.
 
Thus we have Theorem \ref{nrdglc} for general $L$.\\

We turn now to Besov spaces.
For the rest of this section, we fix $a > 1$.  We also fix $\alpha, p, q$ with
$-\infty < \alpha < \infty$ and $0 < p,q \leq \infty$. 
We let $B_p^{\alpha q}$ be the Besov space of section 3.

We fix a finite set ${\mathcal P}$ of real $C^{\infty}$ vector fields on ${\bf M}$, 
whose elements span the tangent space at each point. 
We also fix a spanning set of the differential operators on ${\bf M}$ of degree less
than or equal to $J$ (for any fixed $J$):

\begin{equation}
\label{pmdfopdf}
{\mathcal P}^J = \{X_1\ldots X_M: X_1,\ldots,X_M \in {\mathcal P}, 1 \leq M \leq J\} \cup
\{\mbox{the identity map}\}.
\end{equation}

The following results were obtained
in Lemmas 2.4, 3.2 and 3.3 of \cite{gmbes}, again in the special case $L = \Delta$.  In the present
article, as we shall see, the technical restrictions on $l$ and $M$ in Lemmas \ref{besov1}
and \ref{besov2} below will end up playing no role,
so the reader is advised not to pay undue attention to them.

\begin{lem}
\label{fjan2}
Say $l, M$ are integers with $l \geq 0$ and $M > n$.  
Then there exists $C > 0$ as follows.

Say $\sigma, \nu \in \bf{R}$ with $\sigma \geq \nu$.

Say $x_0 \in {\bf M}$, and suppose that 
 $\varphi_1 = L^l\Phi$, where $\Phi \in C^{2l}({\bf M})$ satisfies:
\[ |\Phi(y)| \leq (1+a^{\sigma}d(y,x_0))^{-M}. \]
Also suppose $x_1 \in {\bf M}$, that $\varphi_2 \in C^{2l}({\bf M})$, and that for all $y \in {\bf M}$,
\[ |L^l \varphi_2(y)| \leq (1+a^{\nu}d(y,x_1))^{n-M}.\] 
Then, 
\[ \left |\int_{\bf M}(\varphi_1 \varphi_2)(y)d\mu(y)\right | \leq C a^{-\sigma n}(1+a^{\nu}d(x_0,x_1))^{n-M}. \]
\end{lem}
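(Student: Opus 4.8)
The plan is to reduce the statement to a purely geometric estimate on an integral of a product of two ``bump'' functions, and then to prove that estimate by splitting the integral according to distance from $x_0$.

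First I would move the operator $L^l$ off of $\varphi_1$ and onto $\varphi_2$. Since $L$ is a formally self-adjoint differential operator on the closed manifold ${\bf M}$ and $\Phi,\varphi_2 \in C^{2l}$, repeated integration by parts gives $\int_{\bf M}\varphi_1\varphi_2\,d\mu = \int_{\bf M}(L^l\Phi)\varphi_2\,d\mu = \int_{\bf M}\Phi\,(L^l\varphi_2)\,d\mu$. Inserting the two pointwise hypotheses on $\Phi$ and on $L^l\varphi_2$ then yields
\[ \left|\int_{\bf M}(\varphi_1\varphi_2)(y)\,d\mu(y)\right| \le \int_{\bf M}(1+a^{\sigma}d(y,x_0))^{-M}(1+a^{\nu}d(y,x_1))^{n-M}\,d\mu(y) =: I, \]
so it suffices to prove $I \le C a^{-\sigma n}(1+a^{\nu}d(x_0,x_1))^{n-M}$.

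Next I would record two single-bump integral facts, both consequences of the volume bound $\mu(B(x,r)) \le Cr^{n}$ (valid for $r \le \diam {\bf M}$, and already used in the proof of the sampling-rate theorem above) together with the layer-cake formula, which reduces an integral of a radial decreasing function to $\int_0^{\diam}g(r)r^{n-1}dr$. Because $M > n$, the first is the \emph{global} estimate $\int_{\bf M}(1+a^{\sigma}d(y,x_0))^{-M}\,d\mu(y) \le Ca^{-\sigma n}$, obtained by substituting $s = a^{\sigma}r$ and using convergence of $\int_0^{\infty}(1+s)^{-M}s^{n-1}\,ds$. The second is the \emph{tail} estimate $\int_{d(y,x_0)\ge r_0}(1+a^{\sigma}d(y,x_0))^{-M}\,d\mu(y) \le Ca^{-\sigma n}(1+a^{\sigma}r_0)^{n-M}$, obtained the same way after checking that $\int_{T}^{\infty}(1+s)^{-M}s^{n-1}\,ds \le C(1+T)^{n-M}$ for all $T \ge 0$.

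Then, writing $R = d(x_0,x_1)$, I would split $I$ at $d(y,x_0) = R/2$. On the near region $d(y,x_0) < R/2$ the triangle inequality forces $d(y,x_1) > R/2$, so the $x_1$-factor obeys $(1+a^{\nu}d(y,x_1))^{n-M} \le 2^{M-n}(1+a^{\nu}R)^{n-M}$; pulling this constant out and applying the global estimate to the remaining $x_0$-factor gives a contribution bounded by $Ca^{-\sigma n}(1+a^{\nu}R)^{n-M}$. On the far region $d(y,x_0)\ge R/2$ I would simply discard the $x_1$-factor (its exponent is negative and its base is at least $1$) and apply the tail estimate with $r_0 = R/2$, producing $Ca^{-\sigma n}(1+a^{\sigma}R)^{n-M}$; finally, since $\sigma \ge \nu$ and $n-M<0$, one has $(1+a^{\sigma}R)^{n-M} \le (1+a^{\nu}R)^{n-M}$, which converts this into the desired bound. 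Adding the two regions completes the estimate.

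The step I expect to be the main obstacle is the far-region bound, where one must extract both the \emph{sharp} volume factor $a^{-\sigma n}$ and the separation decay $(1+a^{\nu}R)^{n-M}$ from the single bump centered at $x_0$. This is exactly where the hypothesis $\sigma \ge \nu$ is indispensable: a naive pointwise ``transfer'' of the $x_1$-factor onto the $x_0$-factor (via $1+a^\nu R \le (1+a^\nu d(y,x_0))(1+a^\nu d(y,x_1))$) loses a logarithm when $\sigma \approx \nu$, since the resulting integrand decays only like $(1+a^\sigma d(y,x_0))^{-n}$, which is not integrable against $r^{n-1}\,dr$. Keeping the integral localized by the region split, and letting the tail of the sharper bump supply the decay in $R$ at the coarser scale $a^{\nu}$, is what avoids this loss.
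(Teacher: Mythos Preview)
Your proposal is correct and follows essentially the same approach as the paper. The paper does not reprove this lemma but refers to Lemma 2.4 of \cite{gmbes}, noting that the only property of $L$ used there is formal self-adjointness, $\langle LF,G\rangle=\langle F,LG\rangle$; your first step is precisely this integration by parts, and the remaining product-of-bumps estimate, handled by splitting at $d(y,x_0)=R/2$ and using the volume bound $\mu(B(x,r))\le Cr^n$ together with $\sigma\ge\nu$, is the standard Frazier--Jawerth argument carried out in that reference.
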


\begin{lem}
\label{besov1}
Fix $b > 0$.  Also fix
an integer $l \geq 1$ with
\begin{equation}
\label{lgqgint}
2l > \max(n(1/p-1)_+ - \alpha, \alpha).
\end{equation}
where here $x_+ = \max(x,0)$.  Fix $M$ with $(M-2l-n)p > n+1$ if $0 < p < 1$, $M-2l-n > n+1$
otherwise.

Then there exists $C > 0$ as follows.

Say $j \in \bf{Z}$. 
Write ${\bf M}$ as a finite disjoint union of 
measurable subsets $\{\tilde{E}^j_k: 1 \leq k \leq \tilde{\mathcal N}_j\}$.  Suppose:
\begin{equation}
\label{ejkntb}
\mbox{the diameter of each } \tilde{E}^j_k \mbox{ is less than or equal to } ba^{-j}. 
\end{equation}
For each $k$ with $1 \leq k \leq \tilde{\mathcal N}_j$, select any $\tilde{x}^j_k \in \tilde{E}^j_k$. 

Suppose that, for each $j \geq 0$, and each $k$, 
\begin{equation}
\label{var2lph}
\tilde{\varphi}^j_k = (a^{-2j}L)^l\tilde{\Phi}^j_k,
\end{equation}
 where $\tilde{\Phi}^j_k \in C^{\infty}({\bf M})$ satisfies the following conditions:
\begin{equation}
\label{xphip}
|X\tilde{\Phi}^j_k(y)| \leq a^{j (\deg X + n)}(1+a^{j}d(y,\tilde{x}^j_k))^{-M} \mbox{  whenever } X \in {\mathcal P}^{4l}. 
\end{equation}
Then, for every $F$ in the inhomogeneous Besov space $B_p^{\alpha q}({\bf M})$, if we let 
\[ \tilde{s}_{j,k} = \langle F, \tilde{\varphi}^j_k \rangle, \]
then
\begin{equation}
\label{besov1way}
\left(\sum_{j = 0}^{\infty} a^{j\alpha q} \left[\sum_k \mu(\tilde{E}^j_k)|\tilde{s}_{j,k}|^p\right]^{q/p}\right)^{1/q} \leq C\|F\|_{B_p^{\alpha q}}.
\end{equation}
\end{lem}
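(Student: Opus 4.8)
The plan is to adapt the Frazier--Jawerth $\varphi$-transform to the manifold setting, replacing the Fourier transform by the functional calculus of $L$. First I would set up a discrete reproducing formula: using a function $g\in{\mathcal S}(\mathbb{R}^{+})$ with $g(0)=0$ that is admissible for a Calder\'on-type identity (together with a low-frequency companion $g_{0}$), and sampling at scale $a^{-\nu}$ on each level $\nu$, one writes $F=\sum_{\nu,m}c^{\nu}_{m}\phi^{\nu}_{m}$, where the synthesis molecules $\phi^{\nu}_{m}$ are centered at sample points $x^{\nu}_{m}$, are localized at scale $a^{-\nu}$, and carry the same $(a^{-2\nu}L)^{l}$ cancellation structure as the $\tilde\varphi^{j}_{k}$. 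The construction and the norm equivalence $\|\{c^{\nu}_{m}\}\|_{b}\approx\|F\|_{B_{p}^{\alpha q}}$, where $b=b_{p}^{\alpha q}$ is the discrete Besov sequence space whose norm is the left side of \ref{besov1way}, are exactly the reproducing-formula results of \cite{gmbes}; as noted in this section, their proofs use only the near-diagonal localization of Theorem \ref{nrdglc}, which we have established for general $L$.

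The heart of the matter is then a matrix estimate. Expanding, $\tilde s_{j,k}=\langle F,\tilde\varphi^{j}_{k}\rangle=\sum_{\nu,m}c^{\nu}_{m}\langle\phi^{\nu}_{m},\tilde\varphi^{j}_{k}\rangle$, so it suffices to show the matrix $\big(\langle\phi^{\nu}_{m},\tilde\varphi^{j}_{k}\rangle\big)$ is almost diagonal and to invoke the boundedness of such matrices on $b$. Each entry is a pairing of two molecules, one at scale $a^{-\nu}$ and one at scale $a^{-j}$, and is estimated by Lemma \ref{fjan2}: when $j\geq\nu$ one takes $\tilde\varphi^{j}_{k}$ in the role of $\varphi_{1}=L^{l}\Phi$ (the finer molecule, so $\sigma=j$) and $\phi^{\nu}_{m}$ in the role of $\varphi_{2}$, and when $\nu>j$ one swaps the two roles; in either case the hypothesis controlling $L^{l}\varphi_{2}$ is furnished by the molecular bounds \ref{xphip} together with the fact that $L$ commutes with the functional calculus used to build $\phi^{\nu}_{m}$. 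The conclusion of Lemma \ref{fjan2} gives, for some off-diagonal rate $\delta=\delta(l)>0$ and a constant $C$, the bound $|\langle\phi^{\nu}_{m},\tilde\varphi^{j}_{k}\rangle|\leq C\,a^{-|j-\nu|\delta}\,a^{-\min(j,\nu)n}\,(1+a^{\min(j,\nu)}d(x^{\nu}_{m},\tilde x^{j}_{k}))^{n-M}$. Here is precisely where the lower bounds \ref{lgqgint} on $l$ enter: the condition $2l>\alpha$ secures enough decay to beat the weight $a^{\nu\alpha}$ in one ordering, and $2l>n(1/p-1)_{+}-\alpha$ secures it in the other and, crucially, in the quasi-Banach range.

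It remains to verify that this almost-diagonal matrix is bounded on $b$. For $p\geq 1$ this is a Schur/Young estimate: since $M>n$, the spatial factor $(1+a^{\min(j,\nu)}d(x^{\nu}_{m},\tilde x^{j}_{k}))^{n-M}$ is summable in $m$ after weighting by $\mu(\tilde E^{j}_{k})$ (the $\tilde E^{j}_{k}$ have diameter at most $ba^{-j}$ and the balls of radius $a^{-j}$ about the centers have bounded overlap), and the factor $a^{-|j-\nu|\delta}$ is then absorbed in the weighted $\ell^{q}$ norm by a discrete Minkowski/Young inequality, yielding the left side of \ref{besov1way} $\leq C\|\{c^{\nu}_{m}\}\|_{b}\approx C\|F\|_{B_{p}^{\alpha q}}$. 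The main obstacle is the quasi-Banach range $0<p<1$, where $L_{p}$ and $\ell^{p}$ are only quasi-normed and the Schur/Young argument fails; there one must replace pointwise coefficient averages by a Peetre-type maximal function, and it is exactly this device that forces the lower bound $2l>n(1/p-1)_{+}-\alpha$ recorded in \ref{lgqgint}.
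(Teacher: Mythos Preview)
Your outline is essentially the argument the paper invokes: the paper does not reprove this lemma but points to \cite{gmbes}, Lemma~3.2, and records that the only facts about $L$ used there are that $L$ is a smooth second-order differential operator and that the almost-orthogonality estimate of Lemma~\ref{fjan2} holds. Your plan---pair the $\tilde\varphi^j_k$ against building blocks of a Littlewood--Paley decomposition of $F$, use Lemma~\ref{fjan2} (swapping roles according to which scale is finer) to get almost-diagonal decay, and then sum---is exactly that argument, and your identification of where the two halves of \eqref{lgqgint} enter is correct.

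One caution on logical order. The ``reproducing-formula results of \cite{gmbes}'' that you invoke for the input decomposition $F=\sum_{\nu,m}c^\nu_m\phi^\nu_m$ together with the two-sided equivalence $\|\{c^\nu_m\}\|_{b}\approx\|F\|_{B_p^{\alpha q}}$ are, in their discrete form, precisely Lemmas~\ref{besov1} and~\ref{besov2} themselves, so appealing to them here is circular. What \cite{gmbes} actually uses as input for Lemma~3.2 is the \emph{definition} of $B_p^{\alpha q}({\bf M})$ via charts and the Euclidean $\varphi$-transform (as in Section~3 here), which furnishes only the one-sided bound $\|\{c^\nu_m\}\|_{b}\leq C\|F\|_{B_p^{\alpha q}}$---and that is all you need. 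This is also why the paper singles out the Seeger--Sogge characterization as an \emph{additional} ingredient for Lemma~\ref{besov2} but not for Lemma~\ref{besov1}: the analysis direction goes through with the chart definition alone plus Lemma~\ref{fjan2}.
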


\begin{lem}
\label{besov2}
Fix $b > 0$.  Also
fix an integer $l \geq 1$ with
\begin{equation}
\label{lgqgintac}
2l > n(1/p-1)_+ - \alpha.
\end{equation}
where here $x_+ = \max(x,0)$.  Fix $M$ with $(M-n)p > n+1$ if $0 < p < 1$, $M-n > n+1$
otherwise.

If $0 < p < 1$, we also fix a number $\rho > 0$.
Then there exists $C > 0$ as follows. 

Say $j \in \bf{Z}$.  Select sets $\tilde{E}^j_k$ and points $\tilde{x}^j_k$ as in Lemma \ref{besov1}.
If $0 < p < 1$, we assume that, for all $j,k$,
\begin{equation}
\label{ejkro}
\mu(\tilde{E}^j_k) \geq \rho a^{-jn}
\end{equation}
Suppose that, for each $j \geq 0$, and each $k$, 
$\tilde{\varphi}^j_k = (a^{-2j}L)^l\tilde{\Phi}^j_k$, where $\tilde{\Phi}^j_k \in C^{\infty}({\bf M})$ satisfies the following conditions:
\[ |X\tilde{\Phi}^j_k(y)| \leq a^{j (\deg X + n)}\left(1+a^{j}d(y,\tilde{x}^j_k)\right)^{-M} \mbox{  whenever } X \in {\mathcal P}^{4l}. \]
Suppose that $\{\tilde{s}_{j,k}: j \geq 0, 1 \leq k \leq \tilde{\mathcal N}_j\}$ satisfies 
\[ (\sum_{j = 0}^{\infty} a^{j\alpha q} [\sum_k \mu(\tilde{E}^j_k)|\tilde{s}_{j,k}|^p]^{q/p})^{1/q} < \infty. \]
Then $\sum_{j=0}^{\infty} \sum_k \mu(\tilde{E}^j_k) \tilde{s}_{j,k} \tilde{\varphi}^j_k$ converges in $B_p^{\alpha q}({\bf M})$, and
\begin{equation}
\label{besov2way}
\left\|\sum_{j=0}^{\infty} \sum_k \mu(\tilde{E}^j_k) \tilde{s}_{j,k} \tilde{\varphi}^j_k\right\|_{B_p^{\alpha q}} \leq
C\left(\sum_{j = 0}^{\infty} a^{j\alpha q}\left[\sum_k \mu(\tilde{E}^j_k)|\tilde{s}_{j,k}|^p\right]^{q/p}\right)^{1/q}.
\end{equation}
\end{lem}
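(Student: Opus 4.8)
The plan is to prove this synthesis (reconstruction) estimate by the molecular method of the Frazier--Jawerth calculus, transported to the manifold by means of the near-diagonal localization of Theorem \ref{nrdglc}. The starting observation is that each $\tilde{\varphi}^j_k$ is a \emph{molecule} adapted to the set $\tilde{E}^j_k$: from $\tilde{\varphi}^j_k = (a^{-2j}L)^l\tilde{\Phi}^j_k$ together with the bounds on $\tilde{\Phi}^j_k$, the function $\tilde{\varphi}^j_k$ is spatially concentrated in the ball of radius $\approx a^{-j}$ about $\tilde{x}^j_k$, with polynomial decay of order $M$, and -- because it lies in the range of $L^l$ -- it carries $2l$ orders of cancellation, i.e.\ it is frequency-concentrated at scale $\approx a^{j}$ and bounded away from the zero eigenvalue. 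Spatial localization together with cancellation are exactly the two features that drive the synthesis.

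To bound $\|F\|_{B_p^{\alpha q}}$, where $F = \sum_{j,k} c_{j,k}\tilde{\varphi}^j_k$ and $c_{j,k} = \mu(\tilde{E}^j_k)\tilde{s}_{j,k}$, I would fix a Littlewood--Paley family $\{g_\kappa\}_{\kappa \geq 0}$ of functions of $L$ realizing an equivalent norm
\[ \|F\|_{B_p^{\alpha q}} \approx \left(\sum_{\kappa=0}^{\infty} a^{\kappa\alpha q}\|g_\kappa(L)F\|_p^q\right)^{1/q}; \]
the existence of such a family, whose operators $g_\kappa(L)$ have kernels localized at scale $a^{-\kappa}$, is furnished by Theorem \ref{nrdglc}. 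The heart of the matter is the almost-orthogonality estimate between a single analyzing band and a single molecule. Writing $g_\kappa(L)(a^{-2j}L)^l = h_{\kappa,j}(L)$ with $h_{\kappa,j}(\lambda) = g_\kappa(\lambda)(a^{-2j}\lambda)^l$, one checks that $h_{\kappa,j}$ is small off the diagonal $\kappa = j$: when $\kappa < j$ the factor $(a^{-2j}\lambda)^l$ evaluated on the frequency support of $g_\kappa$ produces a gain $a^{-2l(j-\kappa)}$, while when $\kappa > j$ the vanishing of $g_\kappa$ near the origin, paired with the smoothness of $\tilde{\Phi}^j_k$ at scale $a^{-j}$, produces a comparable gain. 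Feeding $h_{\kappa,j}$ through the kernel estimates of Theorem \ref{nrdglc} and using the spatial decay in (\ref{xphip}) yields a pointwise bound for the kernel of $g_\kappa(L)\tilde{\varphi}^j_k$ of the form (off-diagonal decay $a^{-\epsilon|\kappa-j|}$) $\times$ (rapid spatial decay in $a^{\min(\kappa,j)}d(\cdot,\tilde{x}^j_k)$), for some $\epsilon > 0$ controlled by the slack in (\ref{lgqgintac}).

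From here the argument is a summation. Taking $L^p$ norms, multiplying by $c_{j,k}$, and summing over $k$ -- using that the $\{\tilde{E}^j_k\}_k$ partition ${\bf M}$ and that $M$ is large enough for the spatial tails to be absolutely summable -- controls $\|g_\kappa(L)F\|_p$ by a constant times $\sum_j a^{-\epsilon|\kappa-j|}\big[\sum_k \mu(\tilde{E}^j_k)|\tilde{s}_{j,k}|^p\big]^{1/p}$, the homogeneity in $a^j$ working out so that exactly the right-hand coefficient block appears. (When $0 < p < 1$ one replaces the triangle inequality by its $p$-power form, and the hypothesis $\mu(\tilde{E}^j_k)\geq\rho a^{-jn}$ is what converts the discrete $\ell^p$ coefficient sums into genuine $L^p$ mass; this is the sole role of that hypothesis.) A discrete Young / Schur inequality in the $(\kappa,j)$ variables, tested against the weights $a^{\kappa\alpha}$, then collapses the double sum into (\ref{besov2way}): the off-diagonal decay $a^{-\epsilon|\kappa-j|}$ needs only to beat the weight shift $a^{|\kappa-j|\,|\alpha|}$, which is precisely what the lower bound on $2l$ in (\ref{lgqgintac}) guarantees. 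Applying the same estimates to the partial sums gives convergence of the series in $B_p^{\alpha q}({\bf M})$.

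The step I expect to be the main obstacle is the almost-orthogonality / kernel estimate for $g_\kappa(L)\tilde{\varphi}^j_k$: one must extract honest decay simultaneously in $|\kappa - j|$ and in the geodesic distance between centers, using only membership of $\tilde{\varphi}^j_k$ in the range of $L^l$ and the off-diagonal decay of functions of $L$. This is exactly where Theorem \ref{nrdglc} -- and, through it, properties \#1--\#4 above -- is used. Since those properties have been verified for general $L$, the proof given in \cite{gmbes} in the case $L = \Delta$ goes through unchanged.
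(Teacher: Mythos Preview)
Your proposal is essentially correct and follows the same route as the paper: both defer to the argument in \cite{gmbes} for $L=\Delta$ and observe that the ingredients needed there hold for general $L$. The paper itself gives no independent proof; it simply isolates the three inputs from \cite{gmbes}---that $L$ is a smooth second-order operator, that Lemma~\ref{fjan2} holds, and that the Seeger--Sogge Littlewood--Paley characterization of $B_p^{\alpha q}$ holds---and notes each is available for general $L$.

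Two small points of alignment are worth making. First, the equivalence $\|F\|_{B_p^{\alpha q}} \approx \big(\sum_\kappa a^{\kappa\alpha q}\|g_\kappa(L)F\|_p^q\big)^{1/q}$ is \emph{not} a consequence of Theorem~\ref{nrdglc} alone; it is precisely the Seeger--Sogge result \cite{SS}, which the paper invokes explicitly for this lemma. Theorem~\ref{nrdglc} supplies the kernel localization of the $g_\kappa(L)$, but the norm equivalence requires the additional pseudodifferential input. Second, the almost-orthogonality you describe via the functional calculus on $h_{\kappa,j}$ is, in \cite{gmbes}, packaged as the integration-by-parts estimate of Lemma~\ref{fjan2}: one writes $\tilde{\varphi}^j_k = (a^{-2j}L)^l\tilde{\Phi}^j_k$ and, for $\kappa > j$, moves powers of $L$ from the analyzing kernel onto $\tilde{\Phi}^j_k$; this is cleaner than tracking $h_{\kappa,j}$ directly and is why the hypothesis demands control of $X\tilde{\Phi}^j_k$ for $X\in\mathcal{P}^{4l}$. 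With those two identifications your sketch matches the paper's.
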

Again, these three results were proved in \cite{gmbes}
in the special case in which $L = \Delta$.
The arguments in \cite{gmbes} used certain properties of $\Delta$, which we shall now argue are shared 
by general $L$.  Once this is observed, the proofs in \cite{gmbes} go through just the same as in  
\cite{gmbes}, and will not be repeated here.

Let us then list the properties of $L$ which were used in the proofs of these lemmas in \cite{gmbes} in the special
case $L = \Delta$, and verify that they hold for general $L$.  

\begin{itemize}
\item
In the proof of Lemma \ref{fjan2} in the special case $L = \Delta$ (which was Lemma 2.4 in \cite{gmbes}), the
only property used of $L$ was that it was a smooth, second-order partial differential operator, which satisfied
$\langle LF, G \rangle = \langle F, LG \rangle$ for all $F,G \in C^2({\bf M})$.
\item
In the proof of Lemma \ref{besov1} in the special case $L = \Delta$ (which was Lemma 3.2 in \cite{gmbes}), the
only properties of $L$ that were used was that it was a smooth, second-order partial differential operator,
and that Lemma \ref{fjan2} above holds.
\item
In the proof of Lemma \ref{besov2} in the special case $L = \Delta$ (which was Lemma 3.3 in \cite{gmbes}), again
these properties of $L$ were used: it is a smooth, second-order partial differential operator, and
Lemma \ref{fjan2} above holds.  In addition, the following result of Seeger-Sogge \cite{SS} was used:\\

Choose $\beta_0 \in C_c^{\infty}((1/4,16))$, with the property that
for any $s > 0$, $\sum_{\nu=-\infty}^{\infty} \beta^2_0(2^{-2\nu}s) = 1$.  For $\nu \geq 1$, define
$\beta_{\nu} \in C_c^{\infty}((2^{2\nu-2},2^{2\nu+4}))$, by $\beta_{\nu}(s) = \beta_0(2^{-2\nu}s)$.
Also, for $s > 0$, define 
the smooth function $\beta_{-1}(s)$ by
$\beta_{-1}(s) = \sum_{\nu=-\infty}^{-1} \beta(2^{-2\nu}s)$.  (Note that $\beta_{-1}(s) = 0$
for $s > 4$.)
Then (\cite{SS}), for $F \in C^{\infty}({\bf M})$, $\|F\|_{B_p^{\alpha q}}$ is equivalent to 
the $l^q$ norm (actually a quasi-norm if $0 < q < 1$)
of the sequence $\{ 2^{\nu\alpha}\|\beta_{\nu}(L)F\|_p: -1 \leq \nu \leq \infty\}$.\\

(Note that the notation of \cite{gmbes} is slightly different from that of \cite{SS};
what \cite{gmbes} calls $\beta_{k-1}(s^2)$, is called $\beta_k(s)$ in \cite{SS}.)  By Theorem 4.1 of
\cite{SS}, the result does hold for general $L$, and in fact would hold if we only knew that
$L = P^2$ for some first-order elliptic, positive, classical pseudodifferential operator on ${\bf M}$.
Of course, we do know that our $L$ satisfies this condition (see our comments on Seeley's work above,
in our discussion of point \#2, following Theorem \ref{nrdglc}).
\end{itemize}

Thus we do indeed have Lemmas \ref{besov1} and \ref{besov2} for general $L$.  
In the next section, we will put this to 
use in the case where ${\bf M}$ is a compact homogeneous manifold.\\

To conclude this section, we shall continue to work on our general ${\bf M}$, and
show how Theorem \ref{nrdglc} and the Seeger-Sogge characterization of Besov spaces
can be used to obtain a description of Besov spaces in terms of best
approximations by band-limited functions.  This result gives a generalization of a part of Theorem 1.1 of  
\cite{Pes09}, where such a description was given in the case $p=2$ for manifolds of bounded geometry. 
Our arguments are analogous to those of \cite{narc2}, Proposition 5.3, where the case in which 
${\bf M}$ is the sphere was dealt with.

We need to make a few observations first.  In the situation of Theorem \ref{nrdglc} (a), it is 
easy to see from eigenfunction expansions that $f(t^2 L)$ maps distributions on ${\bf M}$ to 
distributions on ${\bf M}$.  We have:
\begin{equation}
\label{ftllp}
\mbox{If } 1 \leq p \leq \infty, \mbox{ then } f(t^2 L): L_p({\bf M}) \to L_p({\bf M}),
\mbox{ with norm bounded independent of } t.
\end{equation}
Indeed, say that $K_t$ is the kernel of $f(t^2 L)$; it suffices to observe that for some
$C > 0$, $\int |K_t(x,y)| dx \leq C$ for all $y$, and $\int |K_t(x,y)| dy \leq C$ for 
all $x$.  This however is evident from (\ref{diagest}) with $j=k=0$, since
by (21) of \cite{gmcw}, for any $N > n$ there is a $C_N$ such that
$\int [1+ d(x,y)/t]^{-N} dy \leq C_N t^n$ for all $x$.  \\

Suppose next that $\alpha  > 0$ and $1 < p \leq \infty$, $0 < q < \infty$.  Then, on ${\bf M}$,

\begin{equation}
\label{algeinp}
B^{\alpha q}_p \subseteq L_p, \mbox{ and for some } C > 0,\  \|F\|_{L_p} \leq C\|F\|_{B^{\alpha q}_p}
\mbox{ for all } F \in B^{\alpha q}_p.
\end{equation}
Indeed, recalling our original definition of Besov spaces in section 2, we see that it is enough
to prove this on $\RR^n$.  Choose the $\Phi, \varphi_{\nu}$ in (\ref{besrndf}) in such a manner that
$\Phi + \sum_{\nu = 0}^{\infty} \varphi_{\nu} = 1$ pointwise, so that this is true in ${\mathcal S}'$ as well.
From this, if $F \in {\mathcal S}'$ is such that $\sum_{\nu = 0}^{\infty} \|\check{\varphi}_{\nu}*F\|_{L_p} < \infty$,
then $F \in L^p$, and $F = \Phi * F + \sum_{\nu = 0}^{\infty} \varphi_{\nu} * F$ in $L^p$.  
But the absolute convergence of $\sum_{\nu = 0}^{\infty} \|\check{\varphi}_{\nu}*F\|_{L_p}$ follows easily
if $F \in B_p^{\alpha q}$, by H\"older's inequality if $q \geq 1$, or directly if $0 < q < 1$.  This
shows that $B^{\alpha q}_p \subseteq L_p$, and the inequality $\|F\|_{L_p} \leq C\|F\|_{B^{\alpha q}_p}$
follows similarly.\\

The argument that $\sum_{\nu = 0}^{\infty} \|\check{\varphi}_{\nu}*F\|_{L_p}$ converges absolutely
may be adapted to ${\bf M}$.
Let the $\beta_{nu}$ be as in the Seeger-Sogge result described above.  A similar argument, using their
characterization of Besov spaces, shows that, assuming $\alpha  > 0$ and $1 < p \leq \infty$, $0 < q < \infty$,
one has:

\begin{equation}
\label{cnvp}
\mbox{ If } F \in B^{\alpha q}_p({\bf M}), \mbox{ then } \sum \|\beta_{\nu}(L)F\|_p < \infty.
\end{equation}

We let ${\bf E}_{\omega}(L)$ denote the span of all eigenfunctions of $L$ with eigenvalue
less than or equal to $\omega$.  Let ${\mathcal D}'$ denote the space of distributions
on ${\bf M}$.  We note:

\begin{equation}
\label{cnvbet}
\mbox{ If } j \geq 0 \mbox{ and } F \in {\mathcal D}'({\bf M}), \mbox{ then }
\sum_{\nu = j+1}^{\infty} \beta^2_{\nu}(L)F \mbox{ converges in } {\mathcal D}'({\bf M}), 
\mbox{ and } F - \sum_{\nu = j+1}^{\infty} \beta^2_{\nu}(L)F  \in {\bf E}_{2^{2j+4}}(L).
\end{equation}

Indeed, the convergence of the series in ${\mathcal D}'$ follows from an examination
of the eigenfunction expansion of a smooth function.  Next,
let $G = F - \sum_{\nu = j+1}^{\infty} \beta^2_{\nu}(L)F$.    By the properties of the 
$\beta_{\nu}$, note that one has that $\sum_{\nu = j+1}^{\infty} \beta^2_{\nu}(s) = 1$ for 
$s \geq 2^{2j+4}$.  
If $u$ is an eigenfunction of $L$ with eigenvalue $\lambda$, we then see that $G(u) = 0$
if $\lambda \geq 2^{2j+4}$.  Let $\{u_i\}$ be an orthonormal basis for ${\bf E}_{2^{2j+4}}(L)$,
consisting of real-valued eigenfunctions, and say $G(u_i) = a_i$.  Then $G - \sum_i a_i u_i$ 
annihilates all eigenfunctions of $L$, so it must be zero, as needed.\\

For $1 \leq p \leq \infty$, if $F \in L_p$, we let
\begin{equation}
\mathcal{E}(F,\omega,p)=\inf_{G\in
\textbf{E}_{\omega}(L)}\|F-G\|_{p}.
\end{equation}

We then have:

\begin{theorem}
\label{ernrmeq}
Say $\alpha  > 0$, $1 \leq p \leq \infty$, and $0 < q < \infty$.  Then
$F \in B^{\alpha q}_{p}$ if and only if $F \in L_p$ and
\begin{equation}
\label{errgdpq}
\|F\|^A_{B^{\alpha q}_{p}} := 
\|F\|_{L_p} + \left(\sum_{j=0}^{\infty} (2^{\alpha j}{\mathcal E}(F, 2^{2j},p))^q \right)^{1/q} < \infty.
\end{equation}
Moreover,
\begin{equation}
\label{errnrmeqway}
\|F\|^A_{B^{\alpha q}_{p}} \sim \|F\|_{B^{\alpha q}_{p}}.
\end{equation}
\end{theorem}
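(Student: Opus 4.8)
The plan is to compare the best-approximation quantities $\mathcal{E}(F,2^{2j},p)$ with the Littlewood--Paley pieces $\|\beta_\nu(L)F\|_p$ from the Seeger--Sogge characterization recalled above, which asserts that $\|F\|_{B^{\alpha q}_p}$ is equivalent to the $\ell^q$ (quasi-)norm of $\{2^{\nu\alpha}\|\beta_\nu(L)F\|_p:\nu\geq -1\}$. Everything then reduces to two one-sided comparison estimates, one for each half of (\ref{errnrmeqway}): for $\nu\geq 1$,
\begin{equation}
\|\beta_\nu(L)F\|_p\leq C\,\mathcal{E}(F,2^{2(\nu-1)},p),
\end{equation}
and, for $j\geq 2$,
\begin{equation}
\mathcal{E}(F,2^{2j},p)\leq C\sum_{\nu\geq j-1}\|\beta_\nu(L)F\|_p.
\end{equation}
The one analytic input beyond Seeger--Sogge is the uniform $L_p$-boundedness of the $\beta_\nu(L)$: each $\beta_\nu$ vanishes at $0$, and for $\nu\geq 0$ one has $\beta_\nu(L)=\beta_0(2^{-2\nu}L)=\beta_0(t^2L)$ with $t=2^{-\nu}$ and $\beta_0\in C_c^\infty\subseteq\mathcal{S}(\mathbb{R}^+)$, while $\beta_{-1}$ is a fixed compactly supported smooth function; hence (\ref{ftllp}) gives $\|\beta_\nu(L)\|_{L_p\to L_p}\leq C$ uniformly in $\nu\geq -1$.

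For the first estimate I would fix $\nu\geq 1$ and take a best $L_p$-approximant $G\in\mathbf{E}_{2^{2(\nu-1)}}(L)$ (which exists, the subspace being finite-dimensional), so $\|F-G\|_p=\mathcal{E}(F,2^{2(\nu-1)},p)$. Since $\beta_\nu$ is supported in $(2^{2\nu-2},2^{2\nu+4})$ it vanishes on $[0,2^{2\nu-2}]$, whence $\beta_\nu(L)G=0$ and $\|\beta_\nu(L)F\|_p=\|\beta_\nu(L)(F-G)\|_p\leq C\|F-G\|_p$. To deduce $\|F\|_{B^{\alpha q}_p}\lesssim\|F\|^A_{B^{\alpha q}_p}$ I would raise this to the $q$-th power, multiply by $2^{\nu\alpha q}$, and sum over $\nu\geq 1$; the substitution $j=\nu-1$ turns the right-hand side into $C\sum_{j\geq 0}(2^{\alpha j}\mathcal{E}(F,2^{2j},p))^q$, while the two terms $\nu=-1,0$ are controlled by $\|\beta_\nu(L)F\|_p\leq C\|F\|_p$. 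Invoking the Seeger--Sogge equivalence then yields $F\in B^{\alpha q}_p$ together with the bound.

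For the second estimate I would fix $j\geq 2$ and apply (\ref{cnvbet}) with its index set to $j-2$: the function $G_j:=F-\sum_{\nu\geq j-1}\beta_\nu^2(L)F$ lies in $\mathbf{E}_{2^{2(j-2)+4}}(L)=\mathbf{E}_{2^{2j}}(L)$, so
\begin{equation}
\mathcal{E}(F,2^{2j},p)\leq\|F-G_j\|_p\leq\sum_{\nu\geq j-1}\|\beta_\nu^2(L)F\|_p\leq C\sum_{\nu\geq j-1}\|\beta_\nu(L)F\|_p,
\end{equation}
factoring $\beta_\nu^2(L)=\beta_\nu(L)\beta_\nu(L)$ and using the uniform bound (the series converges in $L_p$ by (\ref{cnvp}), or directly from finiteness of the right-hand side of (\ref{errgdpq})). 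To obtain $\|F\|^A_{B^{\alpha q}_p}\lesssim\|F\|_{B^{\alpha q}_p}$, set $c_\nu=2^{\alpha\nu}\|\beta_\nu(L)F\|_p$; then $2^{\alpha j}\mathcal{E}(F,2^{2j},p)\leq C\sum_{\nu\geq j-1}2^{-\alpha(\nu-j)}c_\nu$, so $(2^{\alpha j}\mathcal{E}(F,2^{2j},p))_j$ is dominated by the convolution of $(c_\nu)$ with the kernel $k_m=2^{-\alpha m}\mathbf{1}_{m\geq -1}$. Because $\alpha>0$ this kernel obeys $\sum_m|k_m|^{\min(1,q)}<\infty$, so the discrete Young inequality (ordinary for $q\geq 1$, $q$-subadditivity for $0<q<1$) gives $\|(2^{\alpha j}\mathcal{E}(F,2^{2j},p))_j\|_{\ell^q}\leq C\|(c_\nu)\|_{\ell^q}\sim C\|F\|_{B^{\alpha q}_p}$. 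The finitely many terms $j=0,1$ are handled by $\mathcal{E}(F,2^{2j},p)\leq\|F\|_p$, and $\|F\|_p\leq C\|F\|_{B^{\alpha q}_p}$ by (\ref{algeinp}), which also bounds the $\|F\|_{L_p}$ summand of $\|F\|^A_{B^{\alpha q}_p}$.

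The two summations are routine and go through uniformly in $0<q<\infty$ precisely because $\alpha>0$ makes the geometric kernel summable. The points needing genuine care are: the uniform $L_p$-boundedness of $\beta_\nu(L)$, which is where Theorem \ref{nrdglc} enters through (\ref{ftllp}); the justification that the Seeger--Sogge equivalence, stated for $C^\infty$ functions, applies to a general $F\in L_p\subseteq\mathcal{D}'$ (a standard density/completion argument, with $\beta_\nu(L)F$ always well defined on distributions); and the endpoint cases $p\in\{1,\infty\}$, where (\ref{algeinp}) and (\ref{cnvp}) are used in their $1<p\leq\infty$ form and the $p=1$ analogue is obtained either by the same reasoning or by deducing $\sum_{\nu\geq -1}\|\beta_\nu(L)F\|_p<\infty$ directly from finiteness of the relevant $\ell^q$ sums.
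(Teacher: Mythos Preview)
Your proposal is correct and follows essentially the same route as the paper: both directions rest on the Seeger--Sogge characterization, the uniform $L_p$-boundedness of $\beta_\nu(L)$ from (\ref{ftllp}), the annihilation $\beta_\nu(L)G=0$ for $G\in\mathbf{E}_{2^{2\nu-2}}(L)$, and the use of (\ref{cnvbet}) to produce a band-limited approximant, with the summation handled by a discrete convolution/kernel argument (the paper cites Proposition~3.1 of \cite{gmbes} where you spell out Young's inequality). Your treatment is in fact a bit more explicit than the paper's on the convolution step and on the endpoint issues at $p=1$ and the extension of Seeger--Sogge beyond $C^\infty$.
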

{\bf Proof.}  Let the $\beta_{nu}$ be as above.  

We first show, for $F \in B^{\alpha q}_{p}$, that $\|F\|^A_{B^{\alpha q}_{p}} \leq  C\|F\|_{B^{\alpha q}_{p}}$.  Because of
(\ref{algeinp}), it is enough to show that 
\begin{equation}
\label{errgdpqmod}
\left(\sum_{j=2}^{\infty} (2^{\alpha j}{\mathcal E}(F, 2^{2j},p))^q \right)^{1/q} \leq C\|F\|_{B^{\alpha q}_{p}}.
\end{equation}

But by (\ref{cnvbet}), (\ref{ftllp}), and (\ref{cnvp}), for $j \geq 2$ we have

\begin{equation}
\label{emest}
{\mathcal E}(F,2^{2j},p) \leq \|\sum_{\nu = j-1}^{\infty} \beta^2_{\nu}(L)F\|_p \leq
C\sum_{\nu = j-1}^{\infty} \| \beta_{\nu}(L)F \|_p < \infty.
\end{equation}

If one recalls the Seeger-Sogge characterization of $B^{\alpha q}_{p}$ 
and the assumption that $\alpha > 0$, and if one uses a standard 
argument, one does find
$\|F\|^A_{B^{\alpha q}_{p}} \leq  C\|F\|_{B^{\alpha q}_{p}}$.  (One introduces an
operator on $\ell^q(\mathbb{N})$ with an appropriate kernel, and invokes Proposition 3.1 of
\cite{gmbes} to show that this operator is bounded on $\ell^q$.)

For the converse, say $\nu \geq 0$.
We simply note that if $G \in E_{2^{2\nu-2}}(L)$, then $\beta_{\nu}(L)G = 0$.
Thus, by (\ref{ftllp}), if $F \in L^p$, then $ \|\beta_{\nu}(L)(F)\|_p =
\|\beta_{\nu}(L)(F-G)\|_p \leq C\|F-G\|_p$.  Accordingly, 
\[ \|\beta_{\nu}(L)(F)\|_p \leq C {\mathcal E}(F,2^{2\nu-2},p) \mbox{ for } \nu \geq 1;\ \  \mbox{    and }
\|\beta_{\nu}(L)(F)\|_p \leq C\|F\|_p \mbox{ for all } \nu. \]
From this, we find at once that $\|F\|_{B^{\alpha q}_{p}} \leq C\|F\|^A_{B^{\alpha q}_{p}}$.
$\Box$

\section{Parseval frames and Besov spaces}

We now revert to the notation of sections 1 through 6.  We modify the construction of ``needlets'' in 
\cite{narc1}, to produce a Parseval frame on ${\bf M}$.

Say $a > 1$.  Choose a function $f \in C_c^{\infty}$, supported in the interval $[a^{-2},a^4]$ such that

\begin{equation}
\label{addto1}
\sum_{j=-\infty}^{\infty} |f(a^{-2j}s)|^2 = 1
\end{equation}
for all $s > 0$.  

(For example, we could choose a smooth function 
   $\Phi$ on $\mathbb{R}^{+}$ with $0 \leq  \Phi \leq 1$, with $\Phi \equiv 1$
in $[0,a^{-2}]$ and with $\Phi = 0$ in $[a^2,\infty)$, and
let $f(t) = [\Phi(t/a^2) - \Phi(t)]^{1/2}$ for $t > 0$.)

Recalling (\ref{Laplacian}), we note that the eigenspace for ${\mathcal L}$ corresponding to the 
eigenvalue $\lambda_0 = 0$ is the space of constant functions, since the $D_j$ span the 
tangent space at each point.  Let $P$ be the projection in $L_2({\bf M})$ onto
the space of constant functions.  We now apply the spectral theorem.  By \cite{gmcw}, Lemma 2.1(b), we have
\begin{equation}
\label{addto1op}
\sum_{j=-\infty}^{\infty} |f|^2({a^{-2j}\mathcal L}) = I-P,
\end{equation}
where the sum converges strongly on $L_2({\bf M})$.  (This is, in fact, easily seen, if one diagonalizes ${\mathcal L}$.)

Say now $F \in L_2({\bf M})$.  We apply (\ref{addto1op}) to $F$ and take the inner product with $F$.  We find

\begin{equation}
\label{addto1sc}
\sum_{j=-\infty}^{\infty} \|f({a^{-2j}\mathcal L})F\|^2_2 = \|(I-P)F\|^2_2
\end{equation}

Expand $F = \sum_m A_m u_m$ in terms of our eigenfunctions of $\mathcal L$.  Then
$f({a^{-2j}\mathcal L})F = \sum_m f(a^{-2j}\lambda_m)A_m u_m \in {\bf E}_{a^{2j+4}}({\mathcal L})$,
since $f(a^{-2j}\lambda_m) = 0$ if $\lambda_m \geq a^{2j+4}$.  Also 
$\overline{f({a^{-2j}\mathcal L})F} \in {\bf E}_{a^{2j+4}}({\mathcal L})$, so by 
Theorem \ref{prodthm}, the product of these two functions, $|f({a^{-2j}\mathcal L})F|^2$
is in ${\bf E}_{4da^{2j+4}}({\mathcal L})$.  Putting 
\begin{equation}
\label{rhoj}
\rho_j = a_{0}(4da^{2j+4}+1)^{-1/2}
\end{equation}
we now find from the cubature formula that
\begin{equation}
\label{cubl2}
\|f({a^{-2j}\mathcal L})F\|^2_2 = \sum_{k=1}^{{\mathcal N}_j}b^j_{k}|[f({a^{-2j}\mathcal L})F](x^j_{k})|^2,
\end{equation}
where $x^j_k \in M_{\rho_j}$, ($k = 1,\ldots,{\mathcal N}_j = N(M_{\rho_j})$), and 
\begin{equation}
\label{wtest1}
b^j_{k}\sim \rho_j^{n},
\end{equation}
in the sense that the ratio of these quantities is bounded above and below by positive constants.

Now, for $t > 0$, let $K_t$ be the kernel of $f(t^2{\mathcal L})$, so that,
for $F \in L_2({\bf M})$, 
\begin{equation}
\label{kerfrm}
[f(t^2{\mathcal L})F](x) = \int_{\bf M} K_t(x,y) F(y) d\mu(y).
\end{equation}
For $x,y \in {\bf M}$, we have
\begin{equation}
\label{ktexp}
K_t(x,y) = \sum_m f(t^2\lambda_m) u_m(x) \overline{u}_m(y).
\end{equation}

Corresponding to each $x^j_k$ we now define the functions
\begin{equation}
\label{vphijkdf}
\varphi^j_k(y) = \overline{K_{a^{-j}}}(x^j_k,y) = \sum_m \overline{f}(a^{-2j}\lambda_m) \overline{u}_m(x^j_k) u_m(y),
\end{equation}
\begin{equation}
\label{phijkdf}
\phi^j_k = \sqrt{b^j_k} \varphi^j_k.
\end{equation}
From (\ref{addto1sc}), (\ref{cubl2}), (\ref{kerfrm}), (\ref{vphijkdf}) and (\ref{phijkdf}), we find that for all $F \in 
L_2({\bf M})$,
\begin{equation}
\label{parfrm}
\|(I-P)F\|^2_2 = \sum_{j,k} |\langle F,\phi^j_k \rangle|^2.
\end{equation}
Note that, by (\ref{vphijkdf}) and (\ref{phijkdf}), and the fact that $f(0) = 0$, each
$\phi^j_k \in (I-P)L_2({\bf M})$.  

{\em Thus the $\phi^j_k$ form a Parseval frame (i.e.\ normalized tight frame) for \\ $(I-P)L_2({\bf M})$.}  
Note also that each $\phi^j_k$ is a finite linear combination of eigenfunctions of 
${\mathcal L}$, hence is smooth.  Moreover, since $f$ vanishes on $[a^4,\infty)$,
we have $\phi^j_k \equiv 0$ once $a^{-2j}\lambda_1 \geq a^4$.  Thus, for some $\Omega$ (specifically
$\Omega = \lfloor (\log_a \lambda_1/2) - 1 \rfloor$, where
$\lfloor \cdot \rfloor = $ greatest integer function), we have
\begin{equation}
\label{jfin}
\phi^j_k \equiv 0 \:\:\:\mbox{if } j < \Omega.
\end{equation}
Note that, by (\ref{rhoj}), for $j \geq \Omega$, we have
\begin{equation}
\label{rhojest}
\rho_j \sim a^{-j},
\end{equation}
in the sense that the ratio of these quantities is bounded above and below by positive constants.
By gereral frame theory, if $F \in L_2({\bf M})$, we have
\begin{equation}
\label{recon}
(I-P)F = \sum_{j=\Omega}^{\infty}\sum_k \langle F,\phi^j_k \rangle \phi^j_k = 
\sum_{j=\Omega}^{\infty}\sum_k b^j_k \langle F,\varphi^j_k \rangle \varphi^j_k,
\end{equation}
with convergence in $L_2$.

We now explain how to characterize Besov spaces on ${\bf M}$ by using out Parseval frames.  
We let $B_{p,0}^{\alpha q}({\bf M})$ be the space of 
distributions $F$ in the Besov space $B_{p}^{\alpha q}({\bf M})$, for which $F(1) = 0$.  We claim:

\begin{theorem}
\label{beshom}
With the $\varphi^j_k$ as above, for some $C > 0$ we have:\\
(a) 
Suppose that $\{s^j_k: j \geq \Omega,\ 1 \leq k \leq {\mathcal N}_j\}$ satisfies 
\begin{equation}
\label{sjkbes}
\left(\sum_{j = \Omega}^{\infty} a^{jq(\alpha - n/p)} \left(\sum_k |s^j_k|^p\right)^{q/p}\right)^{1/q} < \infty.
\end{equation}
Then 
\begin{equation}
\label{cnvbpq}
\sum_{j=\Omega}^{\infty} \sum_k a^{-nj} s^j_k \varphi^j_k \mbox{ converges in } B_p^{\alpha q}({\bf M}), \mbox{ and }
\end{equation}
\begin{equation}
\label{besaway}
\left\|\sum_{j=\Omega}^{\infty} \sum_k a^{-nj} s^j_k \varphi^j_k\right\|_{B_p^{\alpha q}} \leq
C\left(\sum_{j = \Omega}^{\infty} a^{jq(\alpha - n/p)} \left(\sum_k |s^j_k|^p\right)^{q/p}\right)^{1/q}.
\end{equation}
(b) Suppose $F \in B_p^{\alpha q}({\bf M})$.  Then
\begin{equation}
\label{beseq}
\left(\sum_{j = \Omega}^{\infty} a^{jq(\alpha - n/p)} \left(\sum_k |\langle F, \varphi^j_k \rangle|^p\right)^{q/p}\right)^{1/q}
< \infty.
\end{equation}
Moreover, the expression in (\ref{beseq}) defines a quasi-norm on $B_{p,0}^{\alpha q}({\bf M})$ which is 
equivalent to the usual quasi-norm on this space. (If $1 \leq p,q \leq \infty$, these quasi-norms are in fact norms.)\\
(c) Let $c_0 = 1/\sqrt{\mu({\bf M})}$.  Say $F \in B_{p}^{\alpha q}({\bf M})$.  Then
\begin{equation}
\label{reconbes}
F = F(c_0)+ \sum_{j=\Omega}^{\infty}\sum_k b^j_k \langle F,\varphi^j_k \rangle \varphi^j_k,
\end{equation}
with the oonvergence of the right side being in $B_{p}^{\alpha q}({\bf M})$.
(Here $F(c_0)$ means the distribution $F$ applied to the constant
function $c_0$.) \\
(d) Let ${\bf b}_p^{\alpha q}$ denote the quasi-Banach spaces of sequences
$\{s^j_k\}$ ($j \geq \Omega,\ 1 \leq k \leq {\mathcal N}_j$) satisfying 
(\ref{sjkbes}).  Then there are well-defined bounded operators $\tau: B_p^{\alpha q}({\bf M}) \to
{\bf b}_p^{\alpha q}$ and $\sigma: {\bf b}_p^{\alpha q} \to B_{p0}^{\alpha q}({\bf M})$,
given by $\tau(F) = \{\langle F,\varphi^j_k \rangle\}$,
$\sigma(\{s^j_k\}) = \sum_{j=\Omega}^{\infty}\sum_k b^j_k s^j_k \varphi^j_k$
(with convergence in $B_{p0}^{\alpha q}({\bf M})$);
and on $B_{p0}^{\alpha q}({\bf M})$,
$\sigma \circ \tau = id$.
\end{theorem}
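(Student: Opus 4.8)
The plan is to deduce all four parts from the two key estimates of section 7 -- the analysis Lemma \ref{besov1} and the synthesis Lemma \ref{besov2}, applied with $L=\mathcal L$ -- together with the geometric facts about lattices from Lemma \ref{mkrho}. The substantive analytic content lives entirely in those two lemmas, so the real task here is to check that the frame functions $\varphi^j_k$, paired with the Voronoi cells of $M_{\rho_j}$, genuinely satisfy their hypotheses, and then to match up the various weights.

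First I would record the factorization that makes the localization estimate available. Since $f$ is supported in $[a^{-2},a^4]$ it vanishes to infinite order at the origin, so for any $l\geq 1$ we may write $f(s)=s^l g(s)$ with $g\in\mathcal S(\mathbb R^+)$ and $g(0)=0$. Letting $\Psi_t$ be the kernel of $g(t^2\mathcal L)$, an inspection of eigenfunction expansions gives, for $t=a^{-j}$,
$$\varphi^j_k = (a^{-2j}\mathcal L)^l\tilde\Phi^j_k,\qquad \tilde\Phi^j_k(y):=\overline{\Psi_{a^{-j}}(x^j_k,y)},$$
which is exactly the form (\ref{var2lph}) demanded by the lemmas (with $\tilde x^j_k=x^j_k$). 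The decay estimate (\ref{xphip}) for $\tilde\Phi^j_k$ then follows immediately from Theorem \ref{nrdglc}(a) applied to $g$: taking $X=$ identity and $Y\in\mathcal P^{4l}$ in (\ref{diagest}) and combining the cases $N=0$ and $N$ large yields $|Y\tilde\Phi^j_k(y)|\leq C_M a^{j(\deg Y+n)}(1+a^jd(y,x^j_k))^{-M}$ for every $M$, since the $Y$ are products of the real vector fields in $\mathcal P$ and so commute with conjugation. For the sets I take $\tilde E^j_k=\mathcal M_{k,\rho_j}$, the Voronoi cells of Lemma \ref{mkrho}; by (\ref{rhojest}) and the lattice properties these have diameter at most $Ca^{-j}$ (so (\ref{ejkntb}) holds) and measure $\mu(\mathcal M_{k,\rho_j})\sim\rho_j^n\sim a^{-nj}$, which also supplies the lower bound (\ref{ejkro}) needed when $0<p<1$. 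The freedom to take $l$ and $M$ as large as we please -- the factorization works for every $l$, and Theorem \ref{nrdglc}(a) gives every $N$ -- lets us meet the numerical restrictions on $l,M$, which is why they "play no role."

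With this dictionary, parts (a) and (b) are direct translations. Because $\mu(\tilde E^j_k)\sim a^{-nj}$ and $b^j_k\sim a^{-nj}$ by (\ref{wtest1}), the weighted sequence norm in (\ref{besov1way}) and (\ref{besov2way}) is two-sided comparable to the norm in (\ref{sjkbes}) once the weight is absorbed into the coefficients. For (b) I apply Lemma \ref{besov1} with coefficients $\langle F,\varphi^j_k\rangle$, obtaining the finiteness of (\ref{beseq}) and its domination by $\|F\|_{B_p^{\alpha q}}$. For (a) I apply Lemma \ref{besov2} with coefficients chosen so that $\mu(\tilde E^j_k)\tilde s_{j,k}=a^{-nj}s^j_k$, obtaining the convergence (\ref{cnvbpq}) and the estimate (\ref{besaway}). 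The one bookkeeping nuisance is that the frame sum begins at $j=\Omega$, which may be negative, while the lemmas are phrased for $j\geq 0$; but $\Omega$ is a fixed finite integer, so the scales $\Omega\leq j<0$ involve only a fixed finite family of smooth functions and are estimated trivially.

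The remaining parts are assembled from (a) and (b). For (c), the $L_2$ frame identity (\ref{recon}) gives $(I-P)F=\sum_{j,k}b^j_k\langle F,\varphi^j_k\rangle\varphi^j_k$ in $L_2$ for $F\in L_2$; part (a), fed the coefficients $s^j_k=\langle F,\varphi^j_k\rangle$ (which lie in ${\bf b}_p^{\alpha q}$ by part (b)), shows the same series converges in $B_p^{\alpha q}$ for every $F\in B_p^{\alpha q}$, and it remains to identify the limit. Here I exploit that $f$ is supported away from $0$: for a fixed eigenfunction $u_m$ one has $f(a^{-2j}\lambda_m)\neq 0$ for only finitely many $j$, so the $L_2$ reconstruction of $u_m$ is a \emph{finite} linear combination of the $\varphi^j_k$. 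Pairing a general distribution $F\in B_p^{\alpha q}\subseteq\mathcal D'$ against this finite sum is unambiguous and gives $\langle\sigma\tau F,u_m\rangle=\langle(I-P)F,u_m\rangle$ for every $m$; since finite combinations of eigenfunctions are dense in $C^\infty({\bf M})$, the two distributions coincide. This argument is uniform in $p,q$, so the endpoints $p=\infty$ or $q=\infty$ need no separate treatment. Writing $PF=F(c_0)$ in the paper's notation yields (\ref{reconbes}). Part (d) is then pure packaging: $\tau$ is bounded by (b), $\sigma$ by (a), and $\sigma\circ\tau=\mathrm{id}$ on $B_{p0}^{\alpha q}$ is (\ref{reconbes}) with $F(c_0)=0$; feeding this identity back into the estimate of (a) produces the lower bound $\|F\|_{B_p^{\alpha q}}\leq C\big(\text{expression in }(\ref{beseq})\big)$ on $B_{p,0}^{\alpha q}$, completing the quasi-norm equivalence in (b). I expect the main obstacle to be not the reconstruction -- which the finite-overlap remark renders painless -- but the careful verification of the localization hypotheses (the factorization plus the passage from the kernel bounds of Theorem \ref{nrdglc} to (\ref{xphip})) together with the exact matching of the weights $\mu(\tilde E^j_k)\sim b^j_k\sim a^{-nj}$, since any slip there destroys the correspondence between the two sequence-space norms.
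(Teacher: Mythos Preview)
Your argument is correct and its architecture matches the paper's: you verify the hypotheses of Lemmas \ref{besov1} and \ref{besov2} via the factorization $f(s)=s^l f_l(s)$ together with Theorem \ref{nrdglc}, take the Voronoi cells $\mathcal M_{k,\rho_j}$ as the sets, and then translate between the weight $\mu(\tilde E^j_k)\sim a^{-nj}$ and the cubature weights $b^j_k\sim a^{-nj}$. The paper handles the starting index by the reindexing $\tilde\varphi^j_k=\varphi^{j+\Omega}_k$, $\tilde\Phi^j_k=a^{-2\Omega l}\Phi^{j+\Omega}_k$, which is a bit cleaner than your plan to treat the finitely many scales $\Omega\le j<0$ by hand, but both are fine.

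The one genuinely different step is the identification of the limit in (c). The paper argues that the map $T(F)=\sigma\tau F$ is bounded on $B_p^{\alpha q}$ and agrees with $(I-P)F$ for $F\in C^\infty$ (by comparing $L_2$ and $B_p^{\alpha q}$ convergence in $\mathcal D'$), and then invokes density of $C^\infty$ in $B_p^{\alpha q}$ to conclude $T=\mathrm{id}-P$. You instead test directly against eigenfunctions: since $f$ has compact support away from $0$, for each $u_m$ only finitely many $(j,k)$ contribute, so the $L_2$ reconstruction $u_m=\sum_{j,k}b^j_k\langle u_m,\varphi^j_k\rangle\varphi^j_k$ is a finite identity that can be paired with an arbitrary distribution $F$, giving $\langle\sigma\tau F,u_m\rangle=\langle(I-P)F,u_m\rangle$ for every $m$. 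Your route is slightly more elementary and has the advantage of sidestepping the density of $C^\infty$, which is delicate when $q=\infty$; the paper's route is shorter to write once density is granted.
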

{\em Proof.}  For each $j \geq \Omega$, let $E^j_k = M^j_k = M_{k,\rho_j}$ be the disjoint cover
of Lemma \ref{mkrho}.  

We are going to show that we can apply Lemmas \ref{besov1} and \ref{besov2} with

\[ \tilde{\varphi}^j_k = \varphi^{j+\Omega}_k,\ \tilde{E}^j_k = E^{j+\Omega}_k,\ 
\tilde{x}^j_k = x^{j+\Omega}_k,\ L = {\mathcal L}. \]
Choose $l \in \NN$ satisfying (\ref{lgqgint}) (and hence (\ref{lgqgintac}) as well).
Define $f_l(s) = f(s)/s^l$, so that $f_l$ is another $C_c^{\infty}$
function with support in $[a^{-2},a^4]$.  We have $f(s) = s^lf_l(s)$, 
and for any $t > 0$, $f(t^2{\mathcal L}) = (t^2 {\mathcal L})^l f_l(t^2 {\mathcal L})$.  If 
$K^l_t(x,y)$ is the kernel of $f_l(t^2{\mathcal L})$, then an examination of 
(\ref{ktexp}) and the corresponding equation for $K^l$ shows that
\begin{equation}
\label{klder}
\overline{K}_t(x,y) = (t^2{\mathcal L}_y)^l \overline{K}^l_t(x,y),
\end{equation}
where ${\mathcal L}_y$ means ${\mathcal L}$ applied in the $y$ variable.  Put
\begin{equation}
\label{Phdf}
\Phi^j_k(y) = \overline{K}_{a^{-j}}^l(x^j_k,y);
\end{equation}
then
\begin{equation}
\label{Phphder}
\phi^j_k = (a^{-2j}{\mathcal L})^l\Phi^j_k.
\end{equation}
Set $\tilde{\Phi}^j_k = a^{-2\Omega l}\Phi^{j+\Omega}_k$; then we have (\ref{var2lph}).  Let us
check that the other hypotheses of Lemmas \ref{besov1} and \ref{besov2} hold as well.

Since each $M_{k,\rho_j}$ is contained in a ball of radius $\rho_j/2$, since (\ref{rhojest}) and
(\ref{mkrhoway}) hold, and since $E^j_k = M^j_k = M_{k,\rho_j}$, we see that
(\ref{ejkntb}) and (\ref{ejkro}) hold for the $\tilde{E}^j_k$ (for some $b, \rho > 0$).  
Moreover, by (\ref{Phdf}) and Theorem \ref{nrdglc},
(\ref{xphip}) holds for $\tilde{\Phi}^j_k$,
up to a multiplicative constant (independent of $j$ and $y$).  (For this, the values of $l$
and $M$ are irrelevant.)

Thus we may avail ourselves of the conclusions of Lemmas \ref{besov1} and \ref{besov2}.  Note,
by (\ref{rhojest}) and (\ref{mkrhoway}), that
\begin{equation}
\label{muejksim}
\mu(E^j_k) \sim a^{-jn}.
\end{equation}

Let
\begin{equation}
\label{cjkdf}
c^j_k = a^{jn}\mu(E^j_k);
\end{equation}
then the set $\{c^j_k\}$ is bounded above and below by positive constants.

For (a), say that (\ref{sjkbes}) holds.  We find that 
\begin{equation}
\label{sjkbescmp}
\left(\sum_{j = \Omega}^{\infty} a^{jq\alpha} \left[\sum_k \mu(E^j_k)\left|
(s^j_k/c^j_k)\right|^p \right]^{q/p}\right)^{1/q} 
\sim
 \end{equation}
 $$
 \left(\sum_{j = \Omega}^{\infty} a^{jq(\alpha - n/p)} \left[\sum_k |s^j_k|^p\right]^{q/p}\right)^{1/q}
< \infty,
$$
where now $\sim$ means that the ratio of the quantities is bounded above and below by positive
constants independent of the particular collection of $\{s^j_k\}$.  Noting that
$\mu(E^j_k)(s^j_k/c^j_k) = a^{-jn}s^j_k$, we now see that part (a) of the theorem follows at
once from Lemma \ref{besov2}.

For (b), suppose first that $F \in B_p^{\alpha q}({\bf M})$.  The sum in (\ref{beseq})
is less than or equal to 
$$
C\left(\sum_{j = 0}^{\infty} a^{j\alpha q}\left [\sum_k \mu(E^j_k)\langle F,\varphi^j_k \rangle|^p\right]^{q/p}\right)^{1/q},
$$
for some $C$, which is less than or equal to $C\|F\|_{B_p^{\alpha q}}$ for some (other) $C$, by Lemma
\ref{besov1}.  To complete the proof of (b), we must obtain the reverse inequality for $F \in  
B_{p,0}^{\alpha q}({\bf M})$.  

Before doing that, let us prove (c).  By (\ref{recon}), (\ref{reconbes}) holds for 
$F \in C^{\infty}({\bf M})$, 
with convergence in $L_2$.  Note next 
that if $F \in B_{p}^{\alpha q}({\bf M})$, the right side of (\ref{reconbes})
does converge to some element, say $T(F)$, in
$B_{p}^{\alpha q}({\bf M})$.  Indeed, to see this, by (a), we need only 
check that 
$$
\left(\sum_{j = \Omega}^{\infty} a^{jq(\alpha - n/p)} 
\left[\sum_k |a^{nj}b^j_k \langle F,\varphi^j_k \rangle|^p\right]^{q/p}\right)^{1/q} < \infty.
$$  
But,
by (\ref{wtest1}) and (\ref{rhojest}), this
quantity is less than or equal to 
$$
C\left(\sum_{j = \Omega}^{\infty} a^{jq(\alpha - n/p)}\left[\sum_k |\langle F, \varphi^j_k \rangle|^p\right]^{q/p}\right)^{1/q}
$$
for some $C$, which (by the part of (b) that we have shown), is less than or equal to $C\|F\|_{B_p^{\alpha q}}$
for some (other) $C$.  Thus, by (a), the right side of (\ref{reconbes})
does converge to some element $T(F) \in B_{p}^{\alpha q}({\bf M})$, and moreover, the map 
$T: B_{p}^{\alpha q} \to B_{p}^{\alpha q}$ is {\em bounded}.
Next note that, if $F \in C^{\infty}({\bf M})$, then $T(F) = F$.  Indeed, the right side
of (\ref{reconbes}) converges to $F$ in $L_2$, hence in the sense of distributions.  But it
converges to $T(F)$ in $B_{p}^{\alpha q}$, hence also to $T(F)$ in the sense of distributions.
Thus $T(F) = F$ as claimed.  Finally, $C^{\infty}$ is dense in $B_p^{\alpha q}$ (for instance, 
by Theorem 7.1 (a) of \cite{FJ1};
the constructions in that paper show that the building blocks can be taken to be smooth).  Since
$T$ is bounded, we must have $T(F) = F$ for all $F \in B_{p}^{\alpha q}$.  This proves (c).

Now we complete the proof of (b). For $F \in B_{p,0}^{\alpha q}({\bf M})$, we have, from
(c) and then (a), that
\[ \|F\|_{B_p^{\alpha q}} = 
\left\| \sum_{j=\Omega}^{\infty}\sum_k b^j_k \langle F,\varphi^j_k \rangle \varphi^j_k\right\|_{B_p^{\alpha q}}
\leq 
C\left(\sum_{j = \Omega}^{\infty} a^{jq(\alpha - n/p)} 
\left[\sum_k |a^{nj}b^j_k \langle F,\varphi^j_k \rangle|^p\right]^{q/p}\right)^{1/q},
\]
from which
\[ \|F\|_{B_p^{\alpha q}} \leq 
C\left(\sum_{j = \Omega}^{\infty} a^{jq(\alpha - n/p)} \left[\sum_k |\langle F, \varphi^j_k \rangle|^p\right]^{q/p}\right)^{1/q}.
\]
This proves (b).

Finally, for (d), it is clearly enough to reformulate (a) by showing that in (\ref{cnvbpq}) and (\ref{besaway}), we can replace the sum
$\sum_{j=\Omega}^{\infty} \sum_k a^{-nj} s^j_k \varphi^j_k$ by 
$\sum_{j=\Omega}^{\infty} \sum_k b^j_k s^j_k \varphi^j_k$.  (Then (d) will follow at once from this, (b) and (c)).  
But this reformulation of (a) is clear from 
(\ref{wtest1}) and  (\ref{rhojest}), which imply that $b^j_k \sim a^{-nj}$, and from
(a), applied with $b^j_k a^{nj} s^j_k$ in place of $s^j_k$.
 $\Box$\\

We close by noting the relation of our frames to the group action and to dilations
of the underlying quadratic form.  Standard wavelets on the real line have the property that wavelets
on the same scale may be obtained from each other by translation, while wavelets on different scales
may be obtained from each other by appropriate translations and dilations.  As we shall argue, something
similar happens on homogeneous manifolds, at least up to constant multiples.  This discussion is in
large part adapted from \cite{gmcw} and \cite{gmfr}.

Let $T$ be the quasi-regular representation of $G$ on $L^2({\bf M})$ (see (\ref{quasi})); this is 
a unitary representation, which commutes with the self-adjoint operator ${\mathcal L}$.  
Consequently, as operators on $L^2({\bf M})$,
$f(t^2\Delta)$ commutes with elements of $G$ for any bounded Borel function $f$ on $\RR$,
and in particular, if $f \in {\mathcal S}(\RR)$, which we now assume.

Recall (\ref{vphijkdf}).  Fix $j$.  We claim that
\begin{equation}
\label{gac}
\mbox{ if } g x^j_k = x^j_{k'}, \mbox{ then } T(g)\varphi^j_k = \varphi^j_{k'}.
\end{equation}

Indeed, if $g \in G$, $F \in L^2({\bf M})$, $x\in {\bf M}$ and $t > 0$, we have
\[
\int_{\bf M} K_t(gx, gy) F(y) dy = \int_{\bf M} K_t(gx, y) F(g^{-1}y) d(y) = [f(t^2{\mathcal L})(T(g)F)](gx)\\\notag
= T(g)([f(t^2{\mathcal L})(F)])(gx);
\] 
but this is just $[f(t^2{\mathcal L})(F)](x) = \int_{\bf M} K_t(x, y) F(y) dy$, so
\begin{equation}
\label{rotinv}
K_t(gx,gy) = K_t(x,y)
\end{equation}
for all $x,y \in {\bf M}$.  This, together with (\ref{vphijkdf}), implies (\ref{gac}) at once.   
Thus, for any fixed $j$, we one can obtain all of the $\varphi_{j,k}$ 
by applying elements of the group $G$ to any one of them. (For example, on the sphere, for any fixed
$j$, all of the $\varphi_{j,k}$ are rotates of each other.)  This is then true as well for the
frame elements $\phi^j_k$, up to constant multiples (recall (\ref{phijkdf})).

As far as different scales are concerned, there is a dilation in the background.  
Recall the discussion leading to (\ref{Laplacian}).  When we pass from
the kernel of $f({\mathcal L})$ to the kernel of $f(t^2{\mathcal L})$, we are replacing the
$D_j$ by $t D_j = tD_{X_j}$, or equivalently replacing the $X_j$ by $tX_j$, or equivalently
replacing the quadratic form $Q$ by its dilate $Q/t^2$.


\begin{thebibliography}{99}
\bibitem{Ad} R.A.~Adams, {\em Sobolev spaces}, Academic Press,
1975.

\bibitem{antoine} J.-P. Antoine and P. Vandergheynst (1999) {\em Wavelets on the Sphere:
 a Group-Theoretic Approach}, \emph{Applied
and Computational Harmonic Analysis, 7, pp. 262-291}

\bibitem{jfaa1} J.-P. Antoine and P. Vandergheynst (2007),
{\em Wavelets on the Sphere and Other Conic Sections}, Journal of
Fourier Analysis and its Applications, \emph{13, 369-386}

\bibitem{BKMP1}
P. Baldi, G. Kerkyacharian, D. Marinucci and D. Picard, (2006) {\em Asymptotics for 
Spherical Needlets, Annals of Statistics,} in press, arxiv:math/0606599 


\bibitem{BKMP2} P. Baldi, G. Kerkyacharian, D. Marinucci and D. Picard,  (2007) {\em Subsampling 
Needlet Coefficients on the Sphere}, Bernoulli, in press, arxiv 0706.4169 

\bibitem{BB}
P. ~Butzer, H. ~Berens, {\em Semi-Groups of operators and
approximation}, Springer, Berlin, 1967 .

\bibitem{chmpv} P. Cabella, F.K. Hansen, D. Marinucci, D. Pagano,
and N. Vittorio, {\em Search for non-Gaussianity in Pixel, Harmonic,
and Wavelet Space: Compared and Combined}. Physical Review D, 69
(2004) 063007.

\bibitem{Cruz1} M. Cruz, L. Cayon, E. Martinez-Gonzalez, P. Vielva,
J. Jin,  {\em The non-Gaussian Cold Spot in the 3-year WMAP
Data}, Astrophysical Journal (2007) 655, 11-20

\bibitem{FJ1} M. Frazier and B. Jawerth, {\em Decomposition of Besov Spaces}, Ind. Univ. Math. J. 
{\bf 34} (1985), 777-799.

\bibitem{FMN}
W. Freeden, V. Michel, H. Nutz, {\em Satellite-to-satellite tracking and satellite gravity gradiometry (advanced techniques for high-resolution geopotential field determination)},  J. Engrg. Math.  43  (2002),  no. 1, 19--56.

\bibitem{F}
W. Freeden, {\em Multiscale modelling of spaceborne geodata},  European Consortium for Mathematics in Industry. B. G. Teubner, Stuttgart, 1999. 351 pp. ISBN: 3-519-02600-7 
\bibitem{W2}
G. ~Garrig—s,  H. ~ Eugenio,  H.~Sikic,  F. ~Soria,  G. ~Weiss,  E. ~Wilson, {\em Connectivity in the set of tight frame wavelets (TFW)},  Glas. Mat. Ser. III  38(58)  (2003),  no. 1, 75--98. 

\bibitem{spinph}
D. Geller, F.K. Hansen, D. Marinucci, G. Kerkyacharian and D. Picard, \textit{Spin
needlets for cosmic microwave background polarization data analysis}, Physical Review D, D78:123533 (2008).
\bibitem{spinmat}
D. Geller and D. Marinucci, {\em Spin wavelets on the sphere} (2008),
 arxiv0811.2935
\bibitem{spinstat}
D. Geller, X. Lan and D. Marinucci, {\em Spin needlets spectral estimation},
Electronic Journal of Statistics 3 (2009), 1497-1530.
\bibitem{gm1} D. Geller and A. Mayeli, \textit{Continuous wavelets and frames on stratified Lie
groups I}, Journal of Fourier Analysis and Applications, {\bf 12} (2006), 543-579.
\bibitem{gmcw} D. Geller and A. Mayeli, {\em Continuous Wavelets on Compact Manifolds},  Math. Z. {\bf 262} (2009), 895-927.
\bibitem{gmfr} D. Geller and A. Mayeli, {\em Nearly Tight Frames and Space-Frequency Analysis on Compact Manifolds}, 
Math. Z. {\bf 263} (2009), 235-264.
\bibitem{gmbes} D. Geller and A. Mayeli, {\em Besov spaces and frames on compact manifolds}, 
Indiana University Math Journal {\bf 58} (2009), 2003-2042.
\bibitem{spinnr}
D. Geller and A. Mayeli, {\em Nearly tight frames of spin wavelets on the sphere} (2009),
arxiv0907.3164
\bibitem{G1}
K. M. Gorski, P. B. Lilje (2006), {\em  Foreground Subtraction of Cosmic Microwave Back- 
ground Maps using WI-FIT (Wavelet based hIgh resolution Fitting of Internal Templates)}, 
Astrophysical J., 648 , 784Ð796. 

\bibitem{G2}
K. M. Gorski, A. J. Banday, E. Hivon, B. D. Wandelt,
 {\em HEALPix a Framework for High Resolution, Fast Analysis on the Sphere},  2002, ADASS XI, p.107.

\bibitem{J} 
J. Jin, J.-L. Starck, D. L. Donoho, N. Aghanim, O. Forni {\em Cosmological non-Gaussian 
signature detection: comparing performance of different statistical tests},  EURASIP J. Appl. 
Signal Process., 2470Ð2485. 

\bibitem {H3}
S. ~Helgason, {\em Differential Geometry and Symmetric Spaces},
Academic, N.Y., 1962.
\bibitem {H2}
S. ~Helgason, {\em Groups and Geometric Analysis}, Academic Press,
1984.

\bibitem{mpbb08} D. Marinucci, D. Pietrobon, A. Balbi, P. Baldi,
P. Cabella, G. Kerkyacharian, P. Natoli, D. Picard, N. Vittorio,
{\em Spherical Needlets for CMB Data Analysis}, Monthly Notices of
the Royal Astronomical Society, Vol. 383, (2008), 539-545

\bibitem{mcewen1} J. D. McEwen, P. Vielva, M. P. Hobson,
E. Martinez-Gonzalez, A. N. Lasenby, {\em Detection of the integrated
Sachs-Wolfe effect and corresponding dark energy constraints made
with directional spherical wavelets}, Monthly Notices Roy. Astronom.
Soc., 376 (3), (2007) 1211--1226.

\bibitem{mcewen2}J.D. McEwen, M.P. Hobson, A.N. Lasenby., D.J. Mortlock,
(2006) {\em A high-significance detection of non-Gaussianity in
the WMAP 3-year data using directional spherical wavelets}, Monthly
Notices Roy. Astronom. Soc., 371, Issue 123002, (2006) L50--L54.

\bibitem{M}
 H.N. Mhaskar, Eignets for function approximation on manifolds, Appl. Comput. Harmon. Anal. (2010), 
doi:10.1016/j.acha.2009.08.006 

\bibitem{Nar}
R.~Narashimhan, {\em Analysis on Real and Complex Manifolds},
North- Holland, Amsterdam, 1968.
\bibitem{narc1} F.J. Narcowich, P. Petrushev and J. Ward, {\em Localized Tight frames on spheres},
SIAM J. Math. Anal. 38, (2006), 574-594.

\bibitem{narc2} F.J. Narcowich, P. Petrushev and J. Ward, {\em Decomposition of Besov and Triebel-Lizorkin
spaces on the sphere}, J. Func. Anal. ,238, (2006), 530-564.
\bibitem{W3}
M. ~Paluszynski, H. ~ Sikic, G. ~ Weiss,  S. ~Xiao, {\em  Tight frame wavelets, their dimension functions, MRA tight frame wavelets and connectivity properties. Frames},  Adv. Comput. Math.  18  (2003),  no. 2-4, 297--327.
\bibitem{Peet} J. Peetre, {\em New Thoughts on Besov Spaces}, Duke Univ. Math. Series, 1, Dept. Math.,
Duke Univ., Durham, N.C. (1976).


\bibitem{Pes79}
I.~Pesenson, {\em On Interpolation Spaces on Lie Groups},  Dokl.
Akad. Nauk USSR 246 (1979), 1298-1303; English transl. in Soviet
Math. Dokl. 20 (1979).

\bibitem{Pes90}
I. ~Pesenson, {\em  Approximations in the representation space of a Lie group,}  (Russian)  Izv. Vyssh. Uchebn. Zaved. Mat. , no. 7, 43--50(1990);  translation in  Soviet Math. (Iz. VUZ)  34,  no. 7, 49--57 (1990) 
\bibitem{Pes83}
I.~Pesenson, {\em The Nikol`skii-Besov Spaces in Representations of
Lie Groups}, 
 Dokl. Acad. Nauk, USSR, v.273,(1), 45-49 (1983); Engl. Transl. in
Soviet  Math. Dokl., 28 (1983).
\bibitem{Pes88}
I.~Pesenson, {\em Abstract Theory of Nikol`skii-Besov Spaces,} 
Izvestiya VUZ, Mathematika, 59-70, 1988; Engl. Transl. in Soviet
Mathematics, v.32, No 6,
 (1988).
\bibitem{Pes00}
I. ~Pesenson, {\em A sampling theorem on homogeneous manifolds},
Trans. Amer. Math. Soc. {\bf 352} (2000), no. 9, 4257--4269.
\bibitem{Pes04a}
I.~Pesenson, {\em An approach to spectral problems on Riemannian
manifolds,}  Pacific J. of Math. Vol. 215(1), (2004), 183-199.
\bibitem{Pes04b}
I.~Pesenson,  {\em Poincare-type inequalities and reconstruction
of Paley-Wiener functions on manifolds, }  J. of Geometric Analysis
, (4), 1, (2004), 101-121.
\bibitem{Pes08}
 I. ~Pesenson, {\em   Bernstein-Nikolski inequality and Riesz interpolation Formula on
 compact homogeneous manifolds,} J. Approx. Theory ,150, 
 (2008). no. 2, 175-198.
\bibitem{Pes09}
 I. ~Pesenson, {\em Paley-Wiener approximations and multiscale approximations in
Sobolev and Besov spaces on manifolds,}  J. of Geometric Analysis
, 4, (1), (2009), 101-121.
\bibitem{seel} R.T. Seeley, {\em Complex powers of an elliptic operator}. Proc. Symp. Pure Math. {\bf 10}, 
(1968), 288-307.
\bibitem{SS} A. Seeger and C.D. Sogge, {\em On the boundedness of functions of (pseudo-) differential operators
on compact manifolds}, Duke Math. J. , 59, (1989), 709-736.

\bibitem{W1}
H. ~Sikic, D. ~Speegle, G. ~Weiss, {\em Structure of the set of dyadic PFW's},  Frames and operator theory in analysis and signal processing,  263--291, Contemp. Math., 451, Amer. Math. Soc., Providence, RI, 2008

\bibitem{Sogge93} C. Sogge, {\em Fourier Integrals in Classical Analysis}, Cambridge
University Press, 1993.
\bibitem{Stric72} R. Strichartz, {\em A functional calculus for elliptic 
pseudodifferential operators,} Amer. J. Math., 94,  (1972), 711-722.
\bibitem{Tay81} M. Taylor, {\em Pseudodifferential Operators}, Princeton University
Press, 1981.

\bibitem{Trieb} 
 H. ~Triebel, {\em Theory of Function Spaces,} Birkh¬auser Verlag, Basel, Boston, Stuttgart, 1983.
\bibitem{Trieb1} 
H. ~Triebel, {\em  Theory of function spaces II,}
  Monographs in Mathematics, 84. Birkhauser Verlag, Basel, 1992.
\bibitem{Triebman} H. Triebel, {\em Spaces of Besov-Hardy-Sobolev type on complete Riemannian manifolds}, Ark.
Mat., 24,  (1986), 299-337.
\bibitem{V1}
P. Vielva, E. Mart«õnez-Gonz«alez, J. E. Gallegos, L. Toffolatti, J. L. Sanz, (2003) 
{\em Point Source Detection Using the Spherical Mexican Hat Wavelet on Simulated All-Sky Planck 
Maps}, Monthly Notice of the Royal Astronomical Society, Volume 344, Issue 1, pp. 89-104. 
\bibitem{V2}
P. Vielva, E. Martinez-Gonzalez, B. Barreiro, J. Sanz, L. Cayon, (2004) {\em  Detection 
of non-Gaussianity in the WMAP Þrst year data using spherical wavelets}, Astrophysical J., 
Volume 609, pp. 22-34. 
\bibitem{V}
N. ~ Vilenkin, {\em Special functions and the theory of group
representations}, Translations of Mathematical Monographs, Vol. 22
American Mathematical Society, Providence, R. I. 1968 x+613 pp.
\bibitem{wiaux2} Y. Wiaux, J. D. McEwen, P. Vandergheynst, O. Blanc,
{\em Exact reconstruction with directional wavelets on the sphere},
Monthly Notices of the Royal Astronomical Society, Volume 388, Issue
2, (2008) pp. 770-788.
\bibitem{jfaa3} Y. Wiaux, J.D. McEwen, P. Vielva, {\em Complex
Data Processing: Fast Wavelet Analysis on the Sphere}, Journal of
Fourier Analysis and its Applications, 13, (2007) 477-494
\bibitem{Z}
 D. ~Zelobenko, {\em Compact Lie groups and their
 representations}, Translations of Mathematical Monographs, Vol. 40.
American Mathematical Society, Providence, R.I., 1973. viii+448
pp.
\end{thebibliography}
\end{document}